\newcounter{maintheorem}[equation]
\def\themaintheorem{\thesection.\@arabic \c@maintheorem}
\def\theequation{\thesection.\@arabic \c@equation}
\theoremstyle{plain}
\newtheorem{theorem}[equation]{Theorem}
\newtheorem*{theorem*}{Theorem}
\newtheorem{lemma}[equation]{Lemma}
\newtheorem{proposition}[equation]{Proposition}
\theoremstyle{definition}
\newtheorem{fact}[equation]{Fact}
\newtheorem{remark}[equation]{Remark}
\newtheorem{example}[equation]{Example}
\newtheorem{definition}[equation]{Definition}
\newtheorem{notation}[equation]{Notation}
\newtheorem{discussion}[equation]{Discussion}
\newtheorem{observation}[equation]{Observation}
\newtheorem{construction}[equation]{Construction}
\DeclareMathOperator{\ann}{Ann}
\DeclareMathOperator{\core}{core}
\DeclareMathOperator{\D}{\mathcal D}
\DeclareMathOperator{\Hom}{Hom}
\DeclareMathOperator{\HS}{HS}
\DeclareMathOperator{\M}{\mathcal M}
\DeclareMathOperator{\reg}{reg}
\DeclareMathOperator{\socdeg}{socdeg}
\DeclareMathOperator{\socle}{Soc}
\DeclareMathOperator{\type}{type}
\def\theenumi{\@alph\c@enumi}
\def\theenumii{\@roman\c@enumii}
\def\to{\longrightarrow}
\def\RDerChar{\mathbf{R}}
\def\RDer{\@ifnextchar[{\R@Der}{\ensuremath{\RDerChar}}}
\def\R@Der[#1]{\ensuremath{\RDerChar^{#1}}}
\def\cfudot#1{\ifmmode\setbox7\hbox{$\accent"5E#1$}\else
	\setbox7\hbox{\accent"5E#1}\penalty 10000\relax\fi\raise 1\ht7
	\hbox{\raise.1ex\hbox to 1\wd7{\hss.\hss}}\penalty 10000 \hskip-1\wd7\penalty
	10000\box7}
\newcommand*\bigcdot{\mathpalette\bigcdot@{.5}}
\newcommand*\bigcdot@[2]{\mathbin{\vcenter{\hbox{\scalebox{#2}{$\m@th#1\bullet$}}}}}
\newcommand{\N}{\mathbb{N}}
\newcommand{\m}{{\mathfrak m}}
\newcommand{\ee}{\mathbf{e}}
\newcommand{\nn}{\mathbf{n}}
\newcommand{\kk}{\mathbf{k}}
\newcommand{\ua}{\underline{\textit{a}}}
\newcommand{\uz}{\underline{z}}
\newcommand{\ux}{\underline{x}}
\newcommand{\etalchar}[1]{$^{#1}$}
\title[Inverse System of Level $K$-Algebras]
{The Structure of the Inverse System of Level $K$-Algebras}
\author[Masuti] {Shreedevi K. Masuti}
\address{Dipartimento di Matematica, Universit{\`a} di Genova, Via Dodecaneso 35, 16146 Genova, Italy}
\email{masuti@dima.unige.it}
\author[Tozzo]{Laura Tozzo}
\address{Department of Mathematics, University of Kaiserslautern, 67663 Kaiserslautern, Germany}
\curraddr{Dipartimento di Matematica, Universit{\`a} di Genova, Via Dodecaneso 35, 16146 Genova, Italy}
\email{tozzo@dima.unige.it}
\date{\today}
\thanks{The first author was supported by INdAM COFOUND Fellowships cofounded by Marie Curie actions, Italy.}
\keywords{Macaulay's inverse system, level $K$-algebras, Gorenstein $K$-algebras}
\subjclass[2010]{Primary: 13A02, 13H10; Secondary: 13J05, 13F20}
\begin{document}

\begin{abstract}

Macaulay's inverse system is an effective method to construct Artinian $K$-algebras with additional properties like, Gorenstein, level, more generally with any socle type.   
Recently, Elias and Rossi \cite{ER17} gave the structure of the inverse system of $d$-dimensional Gorenstein $K$-algebras for any $d>0$. In this paper we extend their result by establishing a one-to-one correspondence between $d$-dimensional level $K$-algebras and certain submodules of the divided power ring. We give several examples to illustrate our result. 
\end{abstract}

\maketitle

\section{Introduction}

Level rings have been studied in several different contexts. These rings are in between Cohen-Macaulay and Gorenstein rings. Their study was initiated by Stanley in \cite{Stanley77}. 
Since then they have been widely investigated, especially in the Artinian case, see for instance \cite{Boij},\cite{Boi99}, \cite{Boi00}, \cite{Boi09}, \cite{Bertella}, \cite{GHMS}, \cite{Iar84}, \cite{Stefani}. However, there are also many examples of level rings in positive dimension: Stanley-Reisner rings of matroid simplicial complexes \cite{Stanley77}, associated graded rings of semigroup rings corresponding to arithmetic sequences \cite{MT95}, determinantal rings corresponding to generic matrices \cite{BV88} or generic symmetric matrices \cite{Con94}, \cite{Goto}. 

\vskip 2mm
Although several theory has been developed for level rings, they are not as well-understood as 
Gorenstein rings. One of the reasons for the lack of knowledge of level $K$-algebras of positive dimension is the absence of   an effective method to construct level $K$-algebras. Macaulay's inverse system (see \cite{Mac1916}) allows to construct level rings in the zero-dimensional case, as was shown by Emsalem \cite{Ems78} and Iarrobino \cite{Iar94}.
Recently, Elias and Rossi \cite{ER17} gave the structure of the inverse system of  Gorenstein $K$-algebras of any dimension $d>0.$ In this paper we extend their result and give the structure of the inverse system of level $K$-algebras of positive dimension. 

\vskip 2mm
We define local level rings of positive dimension and study their properties in Section \ref{Section:LevelAlgebras}. The definition of level rings is well-known in the graded case.  
Recall that a homogeneous $K$-algebra $A$ is \emph{level} if the canonical module $\omega_A$ of $A$ has a minimal set of generators of same degree (see Definition \ref{Def:GradedLevel}).
For the local case we take inspiration from \cite{EI87}, and say that a local $K$-algebra $A$ is \emph{level} if $A/J$ is Artinian level for some general minimal reduction $J$ of the maximal ideal (see Definition \ref{Def:LocalLevel}).
Notice that defining local level $K$-algebras of positive dimension is non-trivial. One of the reasons is that, if the associated graded ring of $A$ is not Cohen-Macaulay, then Artinian reductions of $A$ by minimal reductions of the maximal ideal may have different socle types (see Example \ref{Example:NotLocalLevel} and Proposition \ref{Prop:construction of level}). 

\vskip 2mm
In Section \ref{Section:DividedRing} we recall some classical results about divided power rings and the inverse system.
Let $R=K[x_1,\ldots,x_m]$ be a standard graded polynomial ring or $R=K[\![x_1,\ldots,x_m]\!]$ a power series in $m$ variables over the field $K$, and let $I$ be an ideal of $R$ (homogeneous if $R=K[x_1,\ldots,x_m]$).  
It is well-known that the the injective hull of $K$ is isomorphic to the \emph{divided power ring} $\D:=K_{DP}[X_1,\ldots,X_m]$  as an $R$-module, where $\D$ has a structure of $R$-module via the contraction action (see Section \ref{Section:DividedRing}). 
Therefore, the dual module $(R/I)^\vee:=\Hom_R(R/I,E_R(K))=\Hom_R(R/I,\D)$ is isomorphic to an $R$-submodule of $\D$, called the \emph{inverse system of $I$} and denoted by  $I^\perp$. 
By Matlis duality it is clear that if $R/I$ has positive Krull dimension, then  $I^\perp$ is not finitely generated. 

\vskip 2mm
In Section \ref{Section:MainResult} we investigate the structure of $I^\perp$ when $R/I$ is a positive dimensional level $K$-algebra. 
In \cite{ER17} the authors proved that $I^\perp$ 
is \emph{$G_d$-admissible} if $R/I$ is Gorenstein of dimension $d$ and, vice versa, for any $G_d$-admissible submodule $W$ of $\D,$ $W^\vee$ is a $d$-dimensional Gorenstein $K$-algebra. 
Generalizing this result, in Theorem \ref{thm:CharOfLocalLevel}, we establish a one-to-one correspondence between level $K$-algebras $R/I$ of dimension $d>0$ and \emph{$L_d^\tau$-admissible} submodules of $\D$ (see Definition \ref{Def:LdAdmissible}). 
Observe that the conditions given for $L_d^\tau$-admissibility are not merely the ``union'' of the conditions given for $G_d$-admissible submodules (see Discussion \ref{Disc:LdAdmissibleDefinition}). 
This is a symptom of the intrinsic complexity of level $K$-algebras in positive dimension, and it is one of the reasons why constructing examples of level algebras is hard.
For example, in the Artinian case, as a consequence of Macaulay's inverse system, it is known that the intersection of Gorenstein ideals of same socle degree is always level. 
The analogous is not true in positive dimension (see Example \ref{Example:intersection}). 
For this reason Theorem \ref{thm:CharOfLocalLevel} is an important tool, as it gives an effective method to construct level $K$-algebras.

\vskip 2mm
In Section \ref{Section:Examples}, we construct several examples of level $K$-algebras.

We have used the computer algebra systems \cite{Macaulay2}, \cite{Singular} and the library \cite{Elias} 
for various computations in this paper.

\section{Level $K$-algebras}
\label{Section:LevelAlgebras}

Throughout this paper let $R=K[x_1,\ldots,x_m]$ be a standard graded polynomial ring or $R=K[\![x_1,\ldots,x_m]\!]$ a power series ring in $m$ variables over an infinite field $K$, let $\M=(x_1,\ldots,x_m)$ be the unique maximal (homogeneous) ideal and $I$ an ideal of $R.$ We write $A=R/I,$ $\m=\M/I$, and assume that $A$ is Cohen-Macaulay of dimension $d>0.$ If $R=K[x_1,\ldots,x_m]$ and $I$ is a homogeneous ideal in $R$, we say that $A$ is {graded} (or {homogeneous}). 
We write $\socle(A):=(0:_A\m)$ for the {socle} of $A$ and set $\tau:=\mathrm{type}(A).$

In this section we introduce the concept of local level $K$-algebras and derive some properties of these algebras that we need to construct the inverse system. In the literature 
level $K$-algebras have been defined for zero-dimensional rings in the local case and for rings of arbitrary dimension in the graded case. 
We define local level $K$-algebras of positive dimension using general minimal reductions (Definition \ref{Def:LocalLevel}). 
This definition, in particular, ensures that if $A$ is level and $(\ua)$ is a general minimal reduction of $\m,$ then  $A/(\ua^\nn)$ is level for every $\nn \in \N^d_+$ (Proposition \ref{Prop:QuotientLevel}).  
This is a necessary property for a ring to be level as we will see in Section \ref{Section:MainResult}.

\begin{definition}\label{Def:GradedLevel}
A homogeneous Cohen-Macaulay $K$-algebra $A$ is called {\it level} if the canonical module of $A$, $\omega_A$, has a minimal set of generators of same degree.  
\end{definition}
\noindent Equivalently, there exists an integer $c$ such that $\beta_{n-d,j}^R(A) = 0$ if and only if $j \neq c$, where $\beta_{i,j}:=\beta_{i,j}^R(A)$ denote the graded Betti numbers of $A.$ In other words, a minimal $R$-free resolution of $A$ is of the form:
\[
0 \to R(-c)^{\beta_{n-d,c}} \to \cdots \to \bigoplus_{j} R(-j)^{\beta_{1,j}} \to R \to 0.
\]

In the following proposition we recall the well-known fact that a homogeneous $K$-algebra $A$ is level if and only if $A/(a)$ is level for any homogeneous $A$-regular element $a$ in $\m.$ 
 
\begin{proposition}
\label{Prop:GLQutotientByHomEle}
 Let $A=R/I$ be a homogeneous $K$-algebra and let $a \in A$ be a homogeneous $A$-regular element. Then $A$ is level if and only if $A/({a})$ is level. 
\end{proposition}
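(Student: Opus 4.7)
The plan is to use the characterisation of level via the canonical module given in Definition \ref{Def:GradedLevel}: $A$ is level precisely when $\omega_A$ admits a minimal homogeneous set of generators concentrated in a single degree. Set $e = \deg a$. The starting point is the short exact sequence of graded $R$-modules
$$0 \to A(-e) \xrightarrow{\;\cdot a\;} A \to A/(a) \to 0.$$

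Applying $\Ext^{\bullet}_R(-,R)$, using that $A$ is Cohen-Macaulay of dimension $d$ (so that $\Ext^{i}_R(A,R)$ vanishes for $i \ne m-d$), and using that $a$ remains regular on $\omega_A$ (since $\omega_A$ is Cohen-Macaulay of dimension $d$ and $a$ is $A$-regular), one obtains, after the standard $-m$ shift that identifies $\Ext^{m-d}_R(A,R)(-m)$ with $\omega_A$, a short exact sequence of graded $A$-modules
$$0 \to \omega_A \xrightarrow{\;\cdot a\;} \omega_A(e) \to \omega_{A/(a)} \to 0.$$

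From this one reads off the graded $A/(a)$-module isomorphism $\omega_{A/(a)} \cong (\omega_A/a\omega_A)(e)$. Since $a \in \m$ is homogeneous of positive degree, graded Nakayama tells us that a homogeneous subset of $\omega_A$ is a minimal generating set over $A$ if and only if its reduction modulo $a$ is a minimal generating set of $\omega_A/a\omega_A$ over $A/(a)$. Hence the multiset of degrees of any minimal homogeneous generating set of $\omega_{A/(a)}$ is exactly the multiset of degrees of a minimal homogeneous generating set of $\omega_A$, uniformly shifted by $-e$. One such multiset consists of a single value if and only if the other does, yielding the desired equivalence.

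The only step demanding careful bookkeeping is the derivation of the short exact sequence of canonical modules from the long exact $\Ext$ sequence; once the Cohen-Macaulay vanishing and the regularity of $a$ on $\omega_A$ are invoked, this is automatic. Nothing else poses a real obstacle: the heart of the argument is that passing to $A/(a)$ simply shifts all generating degrees of the canonical module by the common amount $-e$.
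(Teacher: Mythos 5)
Correct, and essentially the same approach as the paper: the paper simply cites \cite[Corollary 3.6.14]{BH98} for the isomorphism $\omega_{A/(a)} \simeq (\omega_A/a\omega_A)(\deg a)$ and concludes from the uniform shift of generating degrees, whereas you rederive that same isomorphism from the long exact $\Ext$ sequence before drawing the identical conclusion.
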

\begin{proof}
 For a graded $A$-module $M$ and an integer $b,$ let $M(b)$ denote the $A$-module $M$ with grading given by $M(b)_i=M_{b+i}.$ Let $b:=\deg(a).$ By \cite[Corollary 3.6.14]{BH98} 
\[
\omega_{A/(a)} \simeq (\omega_A/(a)\omega_A)\left(b\right).
\]
Hence it follows that $A$ is level if and only if $A/(a)$ is level.
\end{proof}

From Definition \ref{Def:GradedLevel} it is clear that we can not define level $K$-algebras analogously in the local case. However, the definition of Artinian local level $K$-algebras is quite well-known. Let us recall 
this definition. The \emph{socle degree} of an Artinian local ring, denoted as $\socdeg(A),$ is the maximum integer $j$ such that $\m^j \neq 0.$ An Artinian local $K$-algebra $A=R/I$ is said to be \emph{level} if 
$\socle(A)=\m^s$ where $s:=\socdeg(A).$

Let $(A,\m)$ a local ring of positive dimension. 
Recall that an ideal $J \subseteq \m$ is said to be a \emph{reduction} of $\m$ if there exists a non-negative integer $k$ such that $\m^{k+1}=J \m^k.$ If further $J$ does not contain properly any other reductions of $\m, $ then we say that $J$ is a \emph{minimal reduction} of $\m.$ 
We refer to \cite[Section 1.2]{RV10} and \cite[Chapter 8]{HS06} for the basic properties of reductions and superficial elements. 
The following example illustrates that in the non-graded case it is possible that $A/J$ is level for some  minimal reduction $J$ of $\m$, but $A/J'$ is not level for another minimal reduction $J'\ne J.$ This is one of the reasons why defining local level $K$-algebras of positive dimension 
is not trivial. 

\begin{example}
\label{Example:NotLocalLevel}
Let $A=\mathbb{Q}[\![t^6,t^7,t^{11},t^{15}]\!]$.
Then $A$ has type $2$ and $J=(t^6)$ and $J'=(t^6+t^7)$ are two minimal reductions of $\m=(t^6,t^7,t^{11},t^{15})$. Using a computer algebra system, it can be verified that $A/J$ has the Hilbert function $(1,3,2)$ and hence is level. But $A/J'$ has the Hilbert function $(1,3,1,1)$ and hence is not level. Notice that here both $t^6$ and $t^6+t^7$ are superficial elements for $\m.$
\end{example}

We define level $K$-algebras using general minimal reductions. General minimal reductions have been introduced by several authors. Here we recall the definition due to P.~Mantero and Y.~Xie \cite{MX16}.

\begin{definition}\label{Def:genelem}
Let $a_i=\sum_{j=1}^m a_{ij}(\overline{x_j}) \in \m$ 
for $i=1,\ldots,t$ where $(a_{ij}) \in A^{tm}.$ 
We say that the  elements $ a_1,\ldots, a_t$ are {\it general} if there exists a proper Zariski-dense open subset $U$ of $(A/\m)^{tm}=K^{tm}$ such that $(\overline{a_{ij}})\in U$ where $\overline{(\cdot)}$ denotes image in $K.$
\end{definition}

We recall the following fact on general elements:
\begin{fact}
\label{Fact:GeneralElements}
Since $K$ is infinite, a sequence $\ua:=a_1,\ldots,a_d $ of general elements in $\m$ always forms a superficial sequence for $\m$ \cite[Corollary 2.5]{Xie12}. In particular, if $a_1,\ldots,a_d $ are general elements in $\m,$ then $(\ua)$ is a minimal reduction of $\m .$
\end{fact}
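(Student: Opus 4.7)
The statement splits naturally into two assertions, and only the second needs an argument on our part; the first is cited directly from \cite[Corollary 2.5]{Xie12}. My plan is therefore to accept the superficiality claim as a black box and focus on deriving the minimal-reduction conclusion from it.

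Concretely, assume $a_1,\ldots,a_d$ are general, and hence form a superficial sequence for $\m$. The first step is to observe that a superficial sequence of length $d=\dim A$ in a local Noetherian ring is automatically a system of parameters: each $a_{i+1}$ is superficial for $\m$ modulo $(a_1,\ldots,a_i)$, hence not contained in any minimal prime of $(a_1,\ldots,a_i)$ of maximal dimension, which forces $\dim A/(a_1,\ldots,a_{i+1})=\dim A/(a_1,\ldots,a_i)-1$. After $d$ steps the quotient is Artinian, so $(\ua)$ is $\m$-primary. The second step is to upgrade ``$\m$-primary'' to ``reduction of $\m$''. This is a standard consequence of superficiality: for a superficial sequence of length $d$ in an $\m$-primary ideal, the equality $\m^{k+1}=(\ua)\m^k$ holds for $k\gg 0$ (see for instance \cite[Chapter 8]{HS06} or \cite[Section 1.2]{RV10}).

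The third and final step is minimality. Since $K$ is infinite, the analytic spread $\ell(\m)$ equals $\dim A=d$, and every reduction of $\m$ requires at least $\ell(\m)=d$ generators. Thus any reduction of $\m$ generated by $d$ elements is automatically minimal, and this applies to $(\ua)$.

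The main (really the only) conceptual obstacle is tying superficiality to the reduction property; once that is in hand, the other two steps are essentially dimension counts and an appeal to the analytic spread. No argument is needed for the first sentence beyond the citation, since the Zariski-density definition of general elements is tailored precisely so that Xie's result applies.
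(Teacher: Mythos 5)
The paper states this Fact with only a citation for the superficiality claim and leaves the ``in particular'' implicit, so there is no in-paper proof to diverge from; your derivation---superficial sequence of length $d=\dim A$ is a system of parameters, hence a reduction, hence minimal because $\ell(\m)=d$---is exactly the standard argument the paper expects the reader to supply from \cite[Chapter 8]{HS06} or \cite[Section 1.2]{RV10}. Correct as stated.
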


\begin{definition}\label{Def:genminred}
We say that $J=(a_1,\ldots,a_d) \subseteq \m$ is a {\it general minimal reduction} of $\m$ if $a_1,\ldots,a_d$ forms a sequence of general elements in $\m.$ 
\end{definition}

We recall the following lemma from \cite{MX16} which guarantees that the socle degree and the Hilbert function of an Artinian reduction of $A$ are the same for every general minimal reduction of $\m$:
\begin{lemma} \cite[Proposition 3.2]{MX16}
\label{Lemma:IndMinRed}
Let $i\geq 0$.
Then the socle degree of $A/J$ and $\dim_K (\m^i+J/J)$ are independent of the general minimal reduction $J$ of $\m$.
\end{lemma}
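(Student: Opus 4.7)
The plan is to parameterize general minimal reductions by a dense open subset of affine space, recognize $\socdeg(A/J)$ and each $\dim_K((\m^i + J)/J)$ as integer-valued upper semi-continuous functions of these parameters, and then intersect the finitely many dense open loci on which these functions attain their minima to obtain a single dense open set where every invariant is constant.

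First I would fix the parameter space: a $d$-tuple $\ua = (a_1, \ldots, a_d)$ with $a_i = \sum_j a_{ij}\overline{x_j}$ corresponds to a point of $K^{dm}$, and Definition \ref{Def:genelem} cuts out a dense open subset $U \subseteq K^{dm}$ of points giving general sequences; by Fact \ref{Fact:GeneralElements}, each point of $U$ gives a minimal reduction $J = (\ua)$ of $\m$. Since $A$ is Cohen-Macaulay, the standard length-multiplicity formula gives $\length(A/J) = e(\m)$ for every such $J$, and consequently $\m^e \subseteq J$ with $e := e(\m)$. This means that the quotient $A/J$, the image $(\m^i+J)/J$, and the socle degree of $A/J$ can all be read off inside the finite-dimensional $K$-algebra $B := A/\m^{e+1}$, in which the image $\bar J := (J + \m^{e+1})/\m^{e+1}$ has generators whose coordinates in a fixed $K$-basis of $B$ are polynomial functions of the $a_{ij}$.

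With this setup in place, I would argue semi-continuity as follows. For each integer $k$ and each index $i$, the locus
\[
\{ (a_{ij}) \in U : \dim_K((\m^i + J)/J) \geq k\} = \{(a_{ij}) \in U : \dim_K((\bar{\m}^i + \bar J)/\bar J) \geq k\}
\]
is closed in $U$, because inside the fixed finite-dimensional space $B$ it is cut out by a rank condition on a matrix whose entries are polynomial in the $a_{ij}$. A parallel rank-based argument, applied to the condition $\m^s \subseteq J$, shows that $\{(a_{ij}) \in U : \socdeg(A/J) \geq s\}$ is closed in $U$ for each $s$. Each such integer-valued upper semi-continuous function attains its minimum on a dense open subset of $U$; since the invariants vanish for $i, s > e$, only finitely many such loci are nontrivial, and their intersection is still dense open in $U$. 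On this common locus all the invariants in the statement are simultaneously constant, establishing independence from the choice of general minimal reduction.

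The main technical obstacle is translating ideal-theoretic conditions in the infinite-dimensional ring $A$ into algebraic conditions on the parameters $(a_{ij})$. The Cohen-Macaulay length bound $\length(A/J) = e$ is the key input, since it furnishes a uniform cutoff $e$ valid for every general $J$, allowing one to work entirely inside the finite-dimensional $K$-algebra $A/\m^{e+1}$; once the problem is framed there, the semi-continuity arguments reduce to standard linear algebra on varying subspaces of a fixed vector space.
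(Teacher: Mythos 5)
The paper does not prove this lemma; it simply cites \cite[Proposition 3.2]{MX16}, so there is no internal proof to compare against. Evaluating your proposal on its own, a minor point first: the direction of semi-continuity is reversed. Since $\dim_K\bar J=\dim_K B - e(\m)$ is constant on $U$ by the length formula and $\dim_K(\bar{\m}^i+\bar J)$ is the rank of a matrix depending polynomially on the parameters, and rank is \emph{lower} semi-continuous (the locus $\{\rank\ge k\}$ is open, being defined by the nonvanishing of some minor), both $\dim_K((\m^i+J)/J)$ and $\socdeg(A/J)$ are lower semi-continuous and attain their \emph{maxima}, not minima, on dense opens. The conclusion you draw survives this, since either way each invariant is constant on a dense open locus.

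The substantive gap is in the choice of parameter space. Definition~\ref{Def:genelem} allows the coefficients $a_{ij}$ to be arbitrary elements of $A$, with the generality condition imposed only on the residues $\overline{a_{ij}}\in K$. Two general sequences with the same residues but different $a_{ij}\in A$ give genuinely different ideals $J$, so after reducing modulo $\m^{e+1}$ the set of general minimal reductions is parameterized by the entire fiber $\pi^{-1}(U)\subseteq (A/\m^{e})^{dm}$ of the residue map $\pi$ over the dense open $U\subseteq K^{dm}$, not by $U$ alone. Your argument establishes constancy of the invariants only along the section of constant lifts $a_{ij}\in K$, i.e.\ on a copy of $U$ sitting inside $\pi^{-1}(U)$. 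Running the same semi-continuity argument over the full space $(A/\m^{e})^{dm}$ only produces a dense open $V$ on which the invariants take their generic value, and such a $V$ need not contain any cylinder $\pi^{-1}(U')$: the open condition defining $V$ may genuinely depend on the higher-order parts of the $a_i$. What remains to be shown is that, once $(\overline{a_{ij}})\in U$, perturbing each $a_i$ by an element of $\M^2$ leaves $\socdeg(A/J)$ and each $\dim_K((\m^i+J)/J)$ unchanged. This is not automatic when $gr_{\m}(A)$ fails to be Cohen--Macaulay, which is exactly the setting the lemma is meant to address, and it is precisely the content one must import from \cite{MX16}.
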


In \cite[\S 1]{EI87} authors defined general minimal reductions in a different way. 
It is not clear whether their definition and Definition \ref{Def:genminred} are the same. More generally,  they proved that the socle type of $A/J$ is the same for every general minimal reduction $J$ of $\m$ and define the socle type of $A$ as the socle type of $A/J$ for a general minimal reduction $J$ of $\m$ (see \cite{Iar84} for the definition of socle type in the Artinian case). 
Motivated by this, we define level local $K$-algebras as follows:

\begin{definition}\label{Def:LocalLevel}
A Cohen--Macaulay $K$-algebra $A=R/I$, where $R=K[\![x_1,\ldots,x_m]\!]$, is said to be {\it level} if $A/J$ is an Artinian level $K$-algebra for some (equiv. every) general minimal reduction $J$ of $\m.$ 
\end{definition}

Clearly, both in the graded and in the local case, if $A$ is Gorenstein, then $A$ is level.

We point out that we can not remove ``general'' in the definition of level local $K$-algebra. 
In fact, in Example \ref{Example:NotLocalLevel} $A/(t^6)$ is level, but $A$ is not level. Notice that in this example $(t^6)$ is a minimal reduction of $\m,$ but $(t^6)$ is not a general minimal reduction of $\m.$ One of the main reasons why the property being $A/J$ level depends on the minimal reduction $J$ is that the associated graded ring $gr_\m(A):=\oplus_{i \geq 0}\m^i/\m^{i+1}$ of a Cohen-Macaulay local ring $A$ need not be Cohen-Macaulay. 
The following lemma shows that if $gr_\m(A)$ is Cohen-Macaulay and $A/J$ is level for some minimal reduction, then $A/J'$ is level for every minimal reduction $J'$ of $\m$. This gives an effective method to examine whether a given local ring is level. 

\begin{proposition}\label{Prop:construction of level}
Assume that $G:=gr_\m(A)$ is Cohen-Macaulay.  
Then the following assertions hold true:
\begin{enumerate}
 \item The socle degree of $A/J$ is the same for every minimal reduction $J$ of $\m.$
\item If $A/J$ is level for some minimal reduction $J$ of $\m$, then $A/J'$ is level for every minimal reduction $J'$ of $\m.$ In particular, if $A$ is level, then $A/J$ is level for every minimal reduction $J$ of $\m.$
\end{enumerate}
\end{proposition}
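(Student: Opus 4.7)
The plan is to reduce both parts to two ``rigidity'' statements depending only on $A$ (not on the minimal reduction $J$): first, that the Hilbert function of $A/J$ coincides with the $h$-vector $h_A(t)$ of $A$; and second, that $\type(A/J) = \type(A) =: \tau$. Granted these, part (1) is immediate from the first assertion since $\socdeg(A/J)$ is the largest $i$ with a nonzero Hilbert function value; and part (2) will follow because the inclusion $(\m^s+J)/J \subseteq \socle(A/J)$ is automatic and its left-hand side has dimension $h_s$ (the top coefficient of $h_A(t)$), so $A/J$ is level if and only if $\tau = h_s$, a condition manifestly independent of $J$.

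The first rigidity statement is the main technical point. The key step is to show that, for any minimal reduction $J = (a_1,\ldots,a_d)$ of $\m$, the initial forms $a_1^*,\ldots,a_d^*$ in $G$ form a regular sequence. Since $J$ reduces $\m$, there exists $k$ with $\m^{k+1} = J\m^k$; passing to associated graded pieces gives $G_{k+1} = (a_1^*,\ldots,a_d^*)\cdot G_k$, whence $(a_1^*,\ldots,a_d^*)$ is $G_+$-primary and therefore a homogeneous system of parameters of $G$. The Cohen-Macaulay assumption on $G$ then promotes this to a regular sequence. Applying the Valabrega-Valla criterion yields the isomorphism
\[
gr_{\m/J}(A/J) \;\cong\; G/(a_1^*,\ldots,a_d^*),
\]
and since $G$ is Cohen-Macaulay of dimension $d$, the Hilbert series of the right-hand side equals $(1-t)^d \HS_G(t) = h_A(t)$. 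Hence $\HS_{A/J}(t) = h_A(t)$, proving the first rigidity statement; in particular $\socdeg(A/J) = \deg h_A(t)$, establishing (1).

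For the second rigidity statement, I would use that $A$ is Cohen-Macaulay and $a_1,\ldots,a_d$ is a regular $A$-sequence, so a standard change-of-rings argument yields $\type(A/J) = \dim_K \Ext^d_A(K,A) = \type(A) = \tau$. Combined with the first rigidity statement, this immediately gives (2) along the lines explained above; the ``in particular'' clause then follows since a general minimal reduction is in particular a minimal reduction (Fact~\ref{Fact:GeneralElements}).

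The main obstacle I foresee is establishing that $a_1^*,\ldots,a_d^*$ is a regular sequence in $G$: this is where the Cohen-Macaulay hypothesis on $G$ is crucial, since the reduction property alone gives only a system of parameters in $G$. Without Cohen-Macaulayness of $G$ the isomorphism $gr_{\m/J}(A/J) \cong G/(a_1^*,\ldots,a_d^*)$ fails in general, and accordingly the Hilbert function of $A/J$ (and hence the socle degree and levelness) genuinely depends on $J$, as illustrated by Example~\ref{Example:NotLocalLevel}.
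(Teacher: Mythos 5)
Your proposal is correct and follows essentially the same route as the paper: show the initial forms of a minimal generating set of $J$ form a $G$-regular sequence (the paper does this via superficial sequences, you via an h.s.o.p.\ argument; both hinge on Cohen-Macaulayness of $G$), deduce $\HS_{gr_{\m/J}(A/J)}(t)=(1-t)^d\HS_G(t)=h_A(t)$, and read off the socle degree and the top Hilbert coefficient $h_s$. Your explicit use of $\type(A/J)=\type(A)$ simply makes precise a step that the paper leaves implicit in its closing ``Hence the result follows.''
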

\begin{proof}
Let $J \subseteq \m$ be a minimal reduction of $\m$. 
Then there exist $\ua:=a_1,\ldots,a_d \in \m$ such that $J=(\ua)$ and $\ua$ is a superficial sequence for $\m.$ Since $G$ is Cohen-Macaulay, $a_1^*,\ldots,a_d^*$ is $G$-regular. For a graded ring $S,$ let $\HS_S(t)$ denote the Hilbert series of $S.$ Hence if $HS_G(t)=\frac{1+h_1t+\cdots+h_s t^s}{(1-t)^{d}}$ with $h_s \neq 0$ and $\sum_{i=1}^s h_i\ne 0$, then
\[
HS_{gr_{\m/J}(A/J)}(t)=1+h_1t+\cdots+h_s t^s .
\]
This implies that $s$ is the socle degree of $A/J$ and $h_s=\dim_K (\m^s+J/J).$ This proves that the socle degree and $\dim_K (\m^s+J/J)$ are the same for every minimal reduction $J$ of $\m.$ Hence the result follows.
\end{proof}

However, if $G$ is not Cohen-Macaulay, then it is not clear whether $A/J$ level for a general minimal reduction implies that $A/J'$ is level for every minimal reduction $J'$ of $\m.$

\begin{example} \cite[Example 3, p. 125]{RV00}
\label{Example:Rossi-Valla}
Consider the semigroup ring 
\[
A=K[\![t^6,t^8,t^{10},t^{13}]\!]\cong K[\![x,y,z,w]\!]/(y^2-xz,yz-x^3,z^2-x^2y,w^2-x^3y).
 \]
 Then $A$ is Cohen-Macaulay of type $2$ and $gr_\m(A)$ is Cohen-Macaulay. 
Moreover, $A/(t^6)$ has the Hilbert function $(1,3,2)$ and hence is level. Therefore by Proposition \ref{Prop:construction of level} $A$ is level. 
\end{example}

Let $(\ua),$ where $\ua:=a_1,\ldots,a_d$ (is a regular linear sequence if $A$ is homogeneous), be a minimal reduction of $\m$. For $\nn \in \N^d_+$ we set $\ua^\nn=a_1^{n_1},\ldots,a_d^{n_d}.$ 
One of the important properties that a graded ring satisfies is: if $A/(\ua)$ is level (equiv. $A$ is level), then  $A/(\ua^\nn)$ is level for all $\nn \in \N^d_+$.  
This is no longer true in the local case. In fact, in Example \ref{Example:NotLocalLevel} $A/(t^6)$ is level, but $A/(t^6)^2$ is not level. 
However, in Proposition \ref{Prop:QuotientLevel}\eqref{Prop:QuotientLevelLocal} we will show that if $(\ua)$ is a general minimal reduction of $\m$ and $A/(\ua)$ is level (equiv. $A$ is level) with $\mathrm{char}(K)=0$, then $A/(\ua^\nn)$ is level for all $\nn \in \N^d_+.$ 
This is another reason why we need ``general'' in Definition \ref{Def:LocalLevel}. 
We recall few definitions and results needed to prove this. 

Following \cite{MX16} we define the {\it index of nilpotency of $A$ with respect to a reduction $J$ of $\m$} as
\[
 s_J(A):=\socdeg A/J.
\]

\begin{proposition}\cite[Proposition 3.2]{MX16} \cite[Proposition 5.3.3]{Fouli}
\label{Prop:IndOfMult}
If $J$ is a general minimal reduction of $\m,$ then $s_J(A)$ does not depend on a choice of $J.$ Moreover, if $R$ is equicharacteristic and $J$ is a general minimal reduction of $\m$, then 
$$s_{J'}(A) \leq s_J(A)$$
for every minimal reduction $J'$ of $\m.$
\end{proposition}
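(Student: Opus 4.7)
Part (1) is immediate from Lemma \ref{Lemma:IndMinRed}: by definition $s_J(A)=\socdeg A/J$, and that lemma asserts this quantity is independent of the general minimal reduction $J$.

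For Part (2), my plan is to set $s:=s_J(A)$ (well-defined by Part (1)) and prove the stronger statement that $\m^{s+1}\subseteq J'$ for \emph{every} minimal reduction $J'$ of $\m$, which immediately yields $s_{J'}(A)\le s_J(A)$. To achieve this I would parametrize minimal reductions by an irreducible variety and apply upper semicontinuity of fiber length to a suitably truncated coherent sheaf. Let $\mathbf{T}=(T_{ij})_{1\le i\le d,\,1\le j\le m}$ be a matrix of indeterminates; the equicharacteristic hypothesis is exactly what allows us to form $A[\mathbf{T}]$ over $K$. Set $A_i:=\sum_j T_{ij}\overline{x_j}$, $\tilde J:=(A_1,\ldots,A_d)$, and let $U\subseteq K^{dm}$ be the (irreducible) open locus of tuples $\bfa$ for which $J_{\bfa}:=(a_1(\bfa),\ldots,a_d(\bfa))$ is a minimal reduction of $\m$, with $U_0\subseteq U$ the dense open subset on which $J_\bfa$ is general.

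The key object is
\[
N\;:=\;A[\mathbf{T}]\big/\bigl(\tilde J+\m^{s+1}A[\mathbf{T}]\bigr),
\]
which is finitely generated as a $K[\mathbf{T}]$-module because $A/\m^{s+1}$ has finite $K$-length, and whose fiber at $\bfa\in U$ is $A/(J_\bfa+\m^{s+1})$. By upper semicontinuity of fiber dimension for a coherent sheaf, the set $\{\bfa\in U:\length_K A/(J_\bfa+\m^{s+1})\ge e(\m)\}$ is closed in $U$. For $\bfa\in U_0$ one has $\m^{s+1}\subseteq J_\bfa$, so the fiber equals $A/J_\bfa$, of length $e(\m)$ by Cohen--Macaulayness of $A$; hence this closed set contains $U_0$ and therefore all of $U$. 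Since $A/(J_\bfa+\m^{s+1})$ is always a quotient of $A/J_\bfa$ (itself of length $e(\m)$), the two must coincide for every $\bfa\in U$, giving $\m^{s+1}\subseteq J_\bfa$ and therefore $s_{J_\bfa}(A)\le s$.

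The main obstacle is choosing the correct module to test against semicontinuity. Working directly with $A[\mathbf{T}]/\tilde J$ fails on two counts: it is not finitely generated over $K[\mathbf{T}]$, and even if it were, the fiber-length function $\bfa\mapsto\length A/J_\bfa$ is constantly $e(\m)$ by Cohen--Macaulayness and so carries no information. Truncating by $\m^{s+1}$ before forming the universal quotient cuts both knots at once: it produces a coherent sheaf to which semicontinuity applies, and the fiber length is now permitted to drop below $e(\m)$ precisely on the non-general $\bfa$ --- the very behavior the inequality asserts cannot occur.
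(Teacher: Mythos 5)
The paper does not prove this proposition; it is imported verbatim from \cite[Prop.\ 3.2]{MX16} and \cite[Prop.\ 5.3.3]{Fouli}, so there is no author argument to compare you against. Your Part~(1) is fine: it is a direct reading of Lemma~\ref{Lemma:IndMinRed}. Your truncation device for Part~(2) and the upper-semicontinuity and fiber-length bookkeeping are also sound as far as they go.

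The gap is in coverage. You parametrize minimal reductions by $\bfa\in U\subseteq K^{dm}$ via $a_i(\bfa)=\sum_j\bfa_{ij}\overline{x_j}$, i.e.\ by $K$-linear combinations of the fixed minimal generators of $\m$, and you conclude $\m^{s+1}\subseteq J_\bfa$ for all $\bfa\in U$. But the statement to be proved ranges over \emph{every} minimal reduction $J'$ of $\m$, and not every minimal reduction is of the form $J_\bfa$. Take $A=K[\![x,y,z]\!]/(z^3)$, a two-dimensional Cohen--Macaulay hypersurface with $e(\m)=3$, and let $J'=(x+z^2,\,y)$. Then $A/J'\cong K[\![z]\!]/(z^3)$ has length $3=e(\m)$, so by Rees's multiplicity criterion $J'$ is a minimal reduction of $\m$. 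However $J'$ is not generated by $K$-linear forms: its image in $\m/\m^2$ is the span of $x,y$, and the only ``linear'' parameter ideal with that image is $(x,y)$, yet $x+z^2\notin(x,y)$ (set $x=y=0$), so $J'\ne(x,y)$. Thus $J'\notin\{J_\bfa:\bfa\in K^{dm}\}$ and your closed set in $U$ says nothing about it. Note also that the paper's own notion of general elements (Definition~\ref{Def:genelem}) takes coefficients $a_{ij}\in A$, not $K$, precisely because the linear slice of $K^{dm}$ does not exhaust the space of minimal reductions. To reach the full statement one either needs a parametrization of \emph{all} parameter ideals containing $\m^{s+1}$ (e.g.\ by $(\m/\m^{s+1})^{d}$) together with a density argument there, or a direct core computation as in Theorem~\ref{thm:HunekeTrung}; this is exactly the delicate step that the cited references handle and that your argument bypasses.
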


In \cite[Page 346]{EI87} authors obtained an analogous result which does not depend upon the characteristic of $R.$ Since it is not clear whether the definition of general elements given in \cite{EI87} and Definition \ref{Def:genelem} are the same,  it is not known whether  Proposition \ref{Prop:IndOfMult} is true if $R$ is not equicharacteristic.   

\vskip 2mm
Following \cite{MX16} we call the number
\[
s(A):=s_J(A) \mbox{ where $J$ is a general minimal reduction of $\m$},
\]
the {\it index of nilpotency of $A$} which is well-defined by Proposition \ref{Prop:IndOfMult}. 

We need the following theorem from \cite{HT05} on the core of an ideal. Recall that the \emph{core} of an ideal $I$ is the intersection of all (minimal) reductions of $I.$ 
The core is difficult to compute, and there have been many efforts to find explicit formulas, see  \cite{HT05}, \cite{PU05} and references therein.
We recall  the following theorem which gives an explicit formula to compute the core. The theorem is true more generally for equimultiple ideals in any Cohen-Macaulay local ring. Since we need this for $\m,$ we state the result only for $\m.$ Recall that for a reduction $J$ of $\m$, the \emph{reduction number of $\m$ with respect to $J$}, denoted as 
$r_J(\m)$, is the smallest nonnegative integer $k$ such that the equality $\m^{k+1}=J\m^k$ holds true. 
The \emph{reduction number of $\m$} is 
\[
r(\m):=\min\{r_J(\m)\mid J \text{ is a minimal reduction of }\m\}.
\]

\begin{theorem} \cite[Theorem 3.7]{HT05}
\label{thm:HunekeTrung}
Let $R=K[\![x_1,\dots,x_m]\!]$ with $\mathrm{char}(K)=0$, and let $A=R/I$ be Cohen--Macaulay of dimension $d>0$.
Let $J$ be a minimal reduction of $\m$ and $r:=r(\m)$ the reduction number of $\m$. Then
\[
\core(\m)= J^{r+1}:\m^r.
\]
Equivalently, $\core(\m)= J^{n+1}:\m^n$ for all $ n \geq r.$
\end{theorem}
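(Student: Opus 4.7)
I would prove the two containments $\core(\m)\subseteq J^{r+1}:\m^r$ and $J^{r+1}:\m^r\subseteq\core(\m)$ separately, and then upgrade the case $n=r$ to all $n\geq r$. The crucial technical input is the independence of the colon ideal $J^{r+1}:\m^r$ from the choice of minimal reduction $J$, and this is where the hypothesis $\mathrm{char}(K)=0$ becomes essential.

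For the containment $J^{r+1}:\m^r\subseteq\core(\m)$, fix an arbitrary minimal reduction $K$ of $\m$. Assuming the independence result, I would replace $J$ by $K$ to obtain $J^{r+1}:\m^r=K^{r+1}:\m^r$, and then use $K^r\subseteq\m^r$ to conclude
\[
K^{r+1}:\m^r\subseteq K^{r+1}:K^r=K,
\]
where the last equality is the standard colon identity for a regular sequence, valid because $K$ is generated by a system of parameters on the Cohen--Macaulay ring $A$. Intersecting over all minimal reductions $K$ then yields the containment.

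For the reverse $\core(\m)\subseteq J^{r+1}:\m^r$, take $x\in\core(\m)$ and aim at $x\m^r\subseteq J^{r+1}$. Since $x$ lies in every minimal reduction, I would embed $J$ into a generic family of minimal reductions $J_t$ obtained from a matrix $t=(t_{ij})$ of indeterminates acting on the generators of $J$ together with those of $\m$. The containments $x\in J_t$ for each specialization translate into a polynomial identity in the $t_{ij}$; symmetrizing this identity (which divides by factorials and hence requires $\mathrm{char}(K)=0$) forces $x\m^r\subseteq J^{r+1}$. The same averaging argument, applied to two different minimal reductions $J_1$ and $J_2$, simultaneously delivers the independence statement $J_1^{r+1}:\m^r=J_2^{r+1}:\m^r$ invoked in the previous paragraph.

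Finally, the equivalent version $\core(\m)=J^{n+1}:\m^n$ for all $n\geq r$ follows by induction on $n$: the containment $J^{n+1}:\m^n\subseteq J^{n+2}:\m^{n+1}$ is immediate upon multiplying by $\m$, while the reverse direction uses $\m^{n+1}\subseteq J\m^n$ (which holds for $n\geq r$, after passing to a generic reduction if necessary, via Proposition~\ref{Prop:IndOfMult}) combined with the regular-sequence colon identity. The main obstacle throughout is the symmetrization step in characteristic zero: it is the technical heart of Huneke--Trung's argument and cannot be easily bypassed, since the formula genuinely fails in positive characteristic for some examples.
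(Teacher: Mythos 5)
The paper does not prove this theorem; it is stated as a direct citation of Huneke and Trung (\cite[Theorem 3.7]{HT05}) and used as a black box in the proof of Proposition~\ref{Prop:SocDegOfR_nn}. There is therefore no ``paper's own proof'' to compare yours against; you would have to check your sketch against the original argument in \cite{HT05}.

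On your sketch itself: the containment $J^{r+1}:\m^r\subseteq\core(\m)$ is fine modulo the independence lemma, since a minimal reduction of $\m$ in a $d$-dimensional Cohen--Macaulay ring is generated by a regular sequence of length $d$, so the colon identity $K^{r+1}:K^r=K$ is legitimate. But the reverse inclusion $\core(\m)\subseteq J^{r+1}:\m^r$, together with the independence of $J^{r+1}:\m^r$ from $J$, is where \emph{all} of the content lies, and your description of ``symmetrizing a polynomial identity in the $t_{ij}$'' which ``forces $x\m^r\subseteq J^{r+1}$'' is a gesture towards the Huneke--Trung generic-element machinery rather than an argument one could check. You yourself concede that this step is the technical heart of \cite{HT05} and ``cannot be easily bypassed,'' which means what you have written is an outline of what would have to be proved, not a proof of it. One smaller slip: in the final paragraph you invoke Proposition~\ref{Prop:IndOfMult} to justify $\m^{n+1}\subseteq J\m^n$ for $n\geq r$, but that proposition concerns the index of nilpotency $s_J(A)=\socdeg(A/J)$, not the reduction number $r_J(\m)$; the containment $\m^{n+1}=J\m^n$ for $n\geq r_J(\m)$ is just the definition of the reduction number relative to $J$, and what actually needs attention is whether one may take $r_J(\m)=r(\m)$ for the given $J$, which is a separate matter not settled by that proposition.
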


We remark that Theorem \ref{thm:HunekeTrung} is not true in positive characteristic (see \cite[Example 4.9]{PU05}). Therefore from now onwards we assume that the characteristic of $K$ is zero in the case 
$R=K[\![x_1,\ldots,x_n]\!].$ 

We use the following notations:
\begin{align*}
|\nn|&:=n_1+\cdots+n_d \hspace*{0.5cm} \mbox{ for } \nn \in \N^d_+\\
\ee_i &:=(0,\ldots,1,\ldots,0) \in \N^d, \\
{\bf t}_d&:=(t,\ldots,t) \in \N^d \hspace*{0.5cm} \mbox{ for } t \in \N. 
\end{align*}

In the following proposition we prove that $A/(\ua^\nn)$ has ``expected'' socle degree. This is an important result that we need in constructing the inverse system of level  algebras.  
We thank A. De Stefani for providing a proof of this (through private communication) in the one-dimensional case. The same idea works for higher dimension. 
\begin{proposition}
\label{Prop:SocDegOfR_nn}
Let $A=R/I$ and $\ua:=a_1,\ldots,a_d \in \m$ be one of the following: 
\begin{enumerate}
 \item \label{Prop:SocDegOfR_nnGraded} $ R=K[x_1,\ldots,x_m],$ $\ua$ is a regular linear sequence in $\m$ and $s$ the Castelnuovo-Mumford regularity of $A;$ 
 \item \label{Prop:SocDegOfR_nnLocal} $R=K[\![x_1,\ldots,x_m]\!]$ with $\mathrm{char}(K)=0,$ $(\ua)$ a general minimal reduction of $\m$ and $s$ the index of nilpotency of $A.$  
\end{enumerate}
Then for all $\nn \in \N^d_+$,
\begin{equation}\label{Eqn:socledegree}
\socdeg(A/(\ua^\nn))=s+|\nn|-d.
\end{equation}
\end{proposition}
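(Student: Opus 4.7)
The plan is to treat the graded and local cases separately, with the local case reduced by induction to a one-dimensional statement that is the real heart of the argument.

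For the graded case (1), I would argue directly via Hilbert series. Since $\ua$ is a regular sequence of linear forms on the Cohen--Macaulay ring $A$, the sequence $\ua^\nn$ is still regular and $\deg a_i^{n_i}=n_i$, so writing $\HS_A(t)=h_A(t)/(1-t)^d$ gives
\[
\HS_{A/(\ua^\nn)}(t)=h_A(t)\prod_{i=1}^{d}(1+t+\cdots+t^{n_i-1}).
\]
For graded Cohen--Macaulay $A$ one has $s=\reg(A)=\deg h_A(t)$ with $h_s>0$, so the right-hand side is a polynomial of degree exactly $s+|\nn|-d$ with nonzero top coefficient, which is the socle degree of the Artinian quotient.

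For the local case (2), I would induct on $|\nn|\geq d$; the base $\nn=(1,\dots,1)$ is the definition of $s$. For the induction step, assume $n_1\geq 2$ (WLOG), set $\nn''=\nn-\ee_1$ and $\nn'=(1,n_2,\dots,n_d)$, and use the Valabrega--Valla type identity $(a_1^{n_1-1})\cap(\ua^\nn)=a_1^{n_1-1}(\ua^{\nn'})$ (a consequence of $\ua^\nn$ being a regular sequence) to produce the short exact sequence
\[
0\to A/(\ua^{\nn'})\xrightarrow{\,\cdot\, a_1^{n_1-1}\,}A/(\ua^\nn)\to A/(\ua^{\nn''})\to 0.
\]
Since $|\nn'|,|\nn''|<|\nn|$, induction gives the socle degrees of the flanking terms. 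The lower bound $\socdeg(A/(\ua^\nn))\geq s+|\nn|-d$ follows by picking $y\in\m^{s+|\nn'|-d}\setminus(\ua^{\nn'})$ and noting that $a_1^{n_1-1}y\in\m^{s+|\nn|-d}$ but $a_1^{n_1-1}y\notin(\ua^\nn)$, using the colon identity $(\ua^\nn):a_1^{n_1-1}=(\ua^{\nn'})$.

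The hard part will be the matching upper bound $\m^{s+|\nn|-d+1}\subseteq(\ua^\nn)$. Given such $x$, induction applied to $\nn''$ forces $x\in(\ua^{\nn''})$, so $x=a_1^{n_1-1}w+z$ with $z\in(a_2^{n_2},\dots,a_d^{n_d})$; one must show $w$ can be chosen in $(\ua^{\nn'})$. Reducing modulo $(a_2^{n_2},\dots,a_d^{n_d})$ and setting $\overline A:=A/(a_2^{n_2},\dots,a_d^{n_d})$, $\overline s:=\socdeg(\overline A/(a_1))=s+|\nn'|-d$, the task collapses to the one-dimensional claim $\overline\m^{\,\overline s+n_1}\subseteq(a_1^{n_1})$ in $\overline A$. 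In the graded world this is immediate from Hilbert series, but in the local world $gr_\m(A)$ may fail to be Cohen--Macaulay, so Valabrega--Valla does not supply $\overline\m^{\,\overline s+1}=a_1\overline\m^{\,\overline s}$. This is precisely where the Huneke--Trung core formula (Theorem~\ref{thm:HunekeTrung}) enters and where the hypothesis $\mathrm{char}(K)=0$ is essential; the one-dimensional ingredient is attributed to A.~De Stefani in the note preceding the proposition, and fitting it into the induction on $|\nn|$ is the crux.
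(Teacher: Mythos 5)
Your graded argument is correct and in fact cleaner than the paper's: instead of the paper's induction on $|\nn|$ with an explicit decomposition of a socle element, you observe directly that $\HS_{A/(\ua^{\nn})}(t)=h_A(t)\prod_i(1+t+\cdots+t^{n_i-1})$, a polynomial of degree $s+|\nn|-d$ whose leading coefficient is $h_s>0$ because the $h$-vector of a graded Cohen--Macaulay ring is nonnegative. That read off the socle degree immediately, and it's a perfectly good substitute for part (a).

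In the local case, however, your plan has a genuine gap. You reduce the upper bound $\m^{s+|\nn|-d+1}\subseteq(\ua^{\nn})$ to the claim $\overline{\m}^{\,\overline{s}+n_1}\subseteq(a_1^{n_1})$ in $\overline A:=A/(a_2^{n_2},\ldots,a_d^{n_d})$, and you say ``this is where the Huneke--Trung core formula enters,'' but you never explain how it applies in $\overline A$. To use the formula there in the way the paper uses it in $A$, you would need (i) $\overline{\m}^{\,\overline{s}+1}\subseteq\core(\overline{\m})$, which in turn needs $\overline s$ to be the index of nilpotency of $\overline A$, i.e.\ $\socdeg(\overline A/J')\le\overline s$ for \emph{every} minimal reduction $J'$ of $\overline{\m}$, and (ii) that $(a_1)$ be a \emph{general} minimal reduction of $\overline{\m}$. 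Neither is established: $\overline A$ is obtained by modding out by \emph{powers} $a_2^{n_2},\ldots,a_d^{n_d}$ of general elements, so the minimal reductions of $\overline{\m}$ do not correspond to minimal reductions of $\m$, and there is no reason the image of $a_1$ remains general in $\overline A$ (preservation of generality under quotients by powers is exactly the kind of thing that fails in the local, non-Cohen--Macaulay $gr_\m$ setting that you yourself flag). The paper sidesteps all of this by never leaving $A$: it shows $\m^{s+1}\subseteq\core(\m)$ using Proposition~\ref{Prop:IndOfMult}, invokes Theorem~\ref{thm:HunekeTrung} once for $\core(\m)$ in $A$ to get $J^k\m^{s+|\nn|-d+1}\subseteq J^{k+|\nn|-d+1}$ for $J=(\ua)$, uses the combinatorial inclusion $J^{|\nn|-d+1}\subseteq(\ua^{\nn})$, and then cancels the $J^k$ via $J^{i+1}:(a_1)=J^i$ (valid because $\bigoplus J^i/J^{i+1}$ is a polynomial ring over the Cohen--Macaulay ring $A/J$). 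You would need to either verify the preservation of generality and of the index of nilpotency under the passage to $\overline A$, or abandon the reduction in favor of a direct argument in $A$ along the paper's lines.
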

\begin{proof}\pushQED{\qed}
\begin{asparaenum}[(a)]
\item Suppose $R=K[x_1,\ldots,x_m] $ and $\ua \in \m$ is a regular linear sequence in $A.$ 
We prove \eqref{Eqn:socledegree} by induction on $|\nn|$. Notice that the Castelnuovo-Mumford regularity of an Artinian graded ring coincides with its socle degree. Since 
the Castelnuovo-Mumford regularity of $A,$ $\reg(A),$ and regularity of the Artinian reduction $A/(\ua)A$ are the same, we have 
\[
 \socdeg(A/(\ua))=\reg(A/(\ua))=\reg(A)=s.
\]
Hence the assertion is clear for $|\nn|=d$. Let $|\nn| > d$. After a permutation, we may assume that $\nn=(n_1,\ldots,n_d)$ with $n_1 \ge 2$. By induction $A/(\ua^{\nn-\ee_1})$ has socle degree $s+|\nn|-d-1$. Let $f \in \m^{s+|\nn|-d+1}$ be a homogeneous polynomial. 
Since $\m^{s+|\nn|-d+1} \subseteq (\ua^{\nn-\ee_1})$,
\[
f=a_1^{n_1-1}f_1+a_2^{n_2}f_2+\cdots+a_d^{n_d}f_d 
\]
where $f_i \in A$ are homogeneous polynomials. 
Thus $\deg(f_1) \geq  s+|\nn|-d+1 -(n_1-1).$ By induction hypothesis applied to $(1,n_2,\ldots,n_d),$ we get that 
\[f_1 \in \m^{s+|\nn|-(n_1-1)-d+1} \subseteq (a_1,a_2^{n_2},\ldots,a_d^{n_d}).\]
Therefore $f \in (\ua^\nn)$. 
This gives that $\m^{s+|\nn|-d+1} \subseteq (\ua^\nn)$. 

On the other hand, if $f\in \m^{s+|\nn|-d-1} \setminus (\ua^{\nn-\ee_1})$ is a homogeneous polynomial,  
then $a_1 f \in  \m^{s+|\nn|-d} \setminus (\ua^\nn)$. 
Hence $\m^{s+|\nn|-d} \nsubseteq (\ua^\nn)$. This proves \eqref{Eqn:socledegree}.
\vskip 3mm

\item Let $R=K[\![x_1,\ldots,x_m]\!]$ with $\mathrm{char}(K)=0$ and $({\ua})$ a general minimal reduction of $\m.$ 
Let $V$ be the set of all minimal reductions of $\m.$ 
As $\mathrm{char}(K)=0,$  $R$ is equicharacteristic. Hence by Proposition \ref{Prop:IndOfMult}, $\socdeg (A/J) \leq s(A)=s$ for any minimal reduction $J$ of $\m.$ 
Therefore
\[
\m^{s+1}\subseteq \bigcap_{J\in V}J=\core(\m).
\]
We prove \eqref{Eqn:socledegree} by induction on $|\nn|.$ Let $J:=(\ua).$
Since $s$ is the socle degree of $A/(\ua),$ the assertion is clear if $|\nn|=d.$ Let $|\nn|\geq d+1$. Assume that $k:=\max\{0,r-|\nn|+d\}$ where $r:=r(\m)$ is the reduction number of $\m.$ 
Since $k+|\nn|-d\ge r,$ by Theorem \ref{thm:HunekeTrung} $\core(\m)=J^{k+|\nn|-d+1}:\m^{k+|\nn|-d}.$ Therefore
\begin{align}
\label{Eqn:ContainmentForallk}
J^k\m^{s+|\nn|-d+1}&\subseteq \m^{k+|\nn|-d}\m^{s+1}\subseteq \m^{k+|\nn|-d}\core(\m) 
\subseteq J^{k+|\nn|-d+1}. 
\end{align}
Note that $J^{|\nn|-d+1}\subseteq (\ua^{\nn}).$ Indeed suppose that $a_1^{n_1'}\cdots a_d^{n_d'} \in J^{|\nn|-d+1}$ where $\nn'=(n'_1,\ldots,n'_d)\in \N^d$ and $|\nn'|\ge |\nn|-d+1.$ Then $n'_i\ge n_i$ for some $i\in\{1,\dots,d\}$ and hence $a_1^{n_1'}\cdots a_d^{n_d'}\subseteq (\ua^{\nn}).$ Therefore if $k=0,$ then by \eqref{Eqn:ContainmentForallk}
\[
\m^{s+|\nn|-d+1} \subseteq J^{|\nn|-d+1} \subseteq (\ua^{\nn}).
\]
Assume that $k \geq 1.$ Since $\oplus_{i \geq 0} J^i/J^{i+1}$ is Cohen-Macaulay, $J^{i+1}:(a_1)=J^{i}$ for all $i \geq 0.$ 
Hence by \eqref{Eqn:ContainmentForallk}
\[
\m^{s+|\nn|-d+1}\subseteq J^{k+|\nn|-d+1}:J^k \subseteq J^{k+|\nn|-d+1}:(a_1^k)= J^{|\nn|-d+1}\subseteq(\ua^{\nn}).
\]
Thus $\socdeg(A/(\ua^\nn))\le s+|\nn|-d.$

On the other hand, if $u\in \m^{s}\setminus (\ua)$, then $(a_1^{n_1-1}\cdots  a_d^{n_d-1})u\in \m^{s+|\nn|-d}\setminus (\ua^\nn)$. Thus $\m^{s+|\nn|-d} \nsubseteq (\ua^\nn).$ Hence $\socdeg (A/(\ua^\nn)) =s+|\nn|-d.$ \qedhere
\end{asparaenum}
\end{proof} 

\begin{proposition}\label{Prop:QuotientLevel}
\begin{enumerate}
\item \label{Prop:QuotientLevelGraded}
Let $R=K[x_1,\dots,x_m]$ and ${\ua}:={a_1},\ldots,{a_d}$ be a regular linear sequence in $\m$. 
If $A$ is graded level of type $\tau$, then  $A/(\ua^\nn)$ is an Artinian graded level $K$-algebra of type $\tau$ and socle degree $s+|\nn|-d$ for all $\nn \in \N^d_+$ where $s$ is the Castelnuovo-Mumford regularity of $A.$

\item \label{Prop:QuotientLevelLocal}
Let $R=K[\![ x_1,\dots, x_m ]\!]$ with $\mathrm{char}(K)=0$ and $({\ua}) \subseteq \m$ be a general minimal reduction of $\m$ where $\ua:=a_1,\ldots,a_d.$ If $A$ is local level ring of type $\tau$, then  $A/(\ua^\nn)$ is an Artinian local level $K$-algebra of type $\tau$ and socle degree $s+|\nn|-d$ for all $\nn \in \N^d_+$ where $s$ is the index of nilpotency of $A.$
\end{enumerate}
\end{proposition}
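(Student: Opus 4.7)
The plan is to prove (a) and (b) uniformly by induction on $|\nn| \geq d$. The base case $|\nn| = d$ (i.e.\ $\nn = (1,\ldots,1)$) reduces to $A/(\ua)$, which is Artinian level of type $\tau$ and socle degree $s$ in both settings: in (a) by iterating Proposition~\ref{Prop:GLQutotientByHomEle} and using that Castelnuovo--Mumford regularity is preserved modulo a regular linear element, and in (b) by Definition~\ref{Def:LocalLevel} applied to the general minimal reduction $(\ua)$ (together with the fact that $\socdeg(A/(\ua)) = s$). The socle degree $s+|\nn|-d$ for arbitrary $\nn$ is already furnished by Proposition~\ref{Prop:SocDegOfR_nn}. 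Moreover, for every $\nn$ the canonical-module identity $\omega_{A/(\ua^\nn)} \cong \omega_A/(\ua^\nn)\omega_A$ (with a shift by $|\nn|$ in the graded case) combined with Nakayama gives $\type(A/(\ua^\nn)) = \mu(\omega_A/(\ua^\nn)\omega_A) = \mu(\omega_A) = \tau$, so the type is automatic.

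For the inductive step, with $|\nn| > d$ and, after a permutation, $n_1 \geq 2$, set $\nn' = \nn - \ee_1$ and $\nn^* = (1, n_2, \ldots, n_d)$. Using that $\ua^\nn$ is a regular sequence on $A$, the identities $(\ua^\nn) : a_1^{n_1-1} = (a_1, a_2^{n_2}, \ldots, a_d^{n_d}) = (\ua^{\nn^*})$ and $(\ua^\nn) + (a_1^{n_1-1}) = (\ua^{\nn'})$ furnish the short exact sequence
\[
0 \to A/(\ua^{\nn^*}) \xrightarrow{\,\cdot\, a_1^{n_1-1}\,} A/(\ua^\nn) \to A/(\ua^{\nn'}) \to 0.
\]
Since $|\nn^*| = |\nn| - n_1 + 1 < |\nn|$ and $|\nn'| = |\nn| - 1 < |\nn|$, the inductive hypothesis applies to both outer terms; in particular $\socle(A/(\ua^{\nn^*})) = (\m^{s+|\nn^*|-d} + (\ua^{\nn^*}))/(\ua^{\nn^*})$ has $K$-dimension $\tau$.

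Applying $\Hom_A(K, -)$ to the sequence yields an injection $\socle(A/(\ua^{\nn^*})) \hookrightarrow \socle(A/(\ua^\nn))$. Choosing a representative of any socle element of $A/(\ua^{\nn^*})$ from $\m^{s+|\nn^*|-d}$ and multiplying by $a_1^{n_1-1}$, the image lands in $\m^{(n_1-1) + (s+|\nn^*|-d)}/(\ua^\nn) = \m^{s+|\nn|-d}/(\ua^\nn)$, using $|\nn^*| + n_1 - 1 = |\nn|$. Consequently $\dim_K \m^{s+|\nn|-d}/(\ua^\nn) \geq \tau$. On the other hand, Proposition~\ref{Prop:SocDegOfR_nn} forces $\m^{s+|\nn|-d+1} \subseteq (\ua^\nn)$, so $\m^{s+|\nn|-d}/(\ua^\nn) \subseteq \socle(A/(\ua^\nn))$, and the latter has dimension exactly $\tau$. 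The two $\tau$-dimensional subspaces must coincide, giving $\socle(A/(\ua^\nn)) = \m^{s+|\nn|-d}/(\ua^\nn)$, i.e.\ $A/(\ua^\nn)$ is level.

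The hard part is precisely showing that the socle is concentrated in the top $\m$-adic degree: in the local setting there is no grading on $\omega_A$ that would let one place minimal generators in a specific degree, so the naive use of $\omega_{A/(\ua^\nn)} \cong \omega_A/(\ua^\nn)\omega_A$ only recovers the type and not the socle concentration. The SES argument circumvents this by using the inductively established level structure of $A/(\ua^{\nn^*})$ to \emph{produce}, via multiplication by $a_1^{n_1-1}$, a $\tau$-dimensional family of socle elements living in $\m^{s+|\nn|-d}$; the matching upper bound $\tau$ coming from the type then forces the socle to be entirely top-degree. Exactly the same proof works verbatim in the graded and local cases once the appropriate version of Proposition~\ref{Prop:SocDegOfR_nn} is invoked.
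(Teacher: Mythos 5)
Your proof is correct and uses essentially the same core idea as the paper: showing that multiplication by a power of the $a_i$'s injects the socle of a smaller reduction into the top $\m$-adic degree of $A/(\ua^\nn)$, then pinning the dimension down to $\tau$ from above via the fact that the type of an Artinian reduction by a regular sequence equals $\type(A)$. The paper realizes this in a single step with the map $\mu\colon (\m^s+(\ua))/(\ua)\xrightarrow{\cdot a_1^{n_1-1}\cdots a_d^{n_d-1}}(\m^{s_\nn}+(\ua^\nn))/(\ua^\nn)$, whereas you package it as an induction on $|\nn|$ peeling off one $a_1^{n_1-1}$ at a time via a short exact sequence; the two are the same argument composed differently.
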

\begin{proof}\pushQED{\qed}
\begin{asparaenum}
\item 
Follows from Propositions \ref{Prop:GLQutotientByHomEle} and \ref{Prop:SocDegOfR_nn}.
\item 
Since $A$ is level, by definition $A/(\ua)$ is an Artinian local level $K$-algebra of type $\tau.$ 
Let $s_{\nn}:=\socdeg(A/(\ua^\nn)).$ 
By Proposition \ref{Prop:SocDegOfR_nn}\eqref{Prop:SocDegOfR_nnLocal}, $s_\nn=s+|\nn|-d.$ Consider
\[
\mu: \frac{\m^s+(\ua)}{(\ua)} \xrightarrow{\cdot a_1^{n_1-1}\cdots a_d^{n_d-1}} \frac{\m^{s_\nn}+(\ua^\nn)}{(\ua^\nn)}.
\]
Clearly $\mu$ is well-defined. Since $\ua$ is a regular sequence in $A,$ it is easy to verify that $\mu$ is injective. Hence 
\[
\dim_K \frac{\m^{s_\nn} + (\ua^\nn)}{(\ua^\nn)} \geq \dim_K \frac{\m^s+(\ua)}{(\ua)}=\dim_K \socle(A/(\ua))=\tau.
\]
On the other hand, one always has 
\[
\dim_K  \frac{\m^{s_\nn} + (\ua^\nn)}{(\ua^\nn)} \leq \tau
\]
as $A/(\ua^\nn)$ is an Artinian local ring of type $\tau$ and $(\m^{s_\nn} + (\ua^\nn))/(\ua^\nn) \subseteq \socle(A/(\ua^\nn)). $ 
Therefore $\dim_ K (\m^{s_\nn} + (\ua^\nn))/(\ua^\nn)=\tau$ and hence $(\m^{s_\nn} + (\ua^\nn))/(\ua^\nn)=\socle(A/(\ua^\nn)).$ Thus $A/(\ua^\nn)$ is level.\qedhere
\end{asparaenum}
\end{proof}

\section{Divided Power rings and Inverse System}
\label{Section:DividedRing}
In this section we recall some basic facts about the divided power rings and Macaulay's inverse system for Artinian level $K$-algebras. These play an important role in constructing the inverse system of level $K$-algebras of dimension $d>0$ in the next section. 

Let $R=K[\![x_1,\ldots,x_m]\!]$ or $R=K[x_1,\ldots,x_m]$ and let $R_i$ denote the $K$-vector space spanned by homogeneous 
polynomials of degree $i.$ 
It is known that the injective hull of $K$ as an $R$-module is isomorphic to the following ring (see \cite[Exercise A3.4(b)]{Eis95} and \cite{Gab59}):
\[
\displaystyle
E_R(K) \cong \D:=K_{DP}[X_1,\ldots,X_m]=\bigoplus_{i \geq 0}\Hom_K(R_i,K),
\]
called the \emph{divided power ring}.
The ring $\D$ has a structure of $R$-module via the following contraction action:
\begin{equation}\label{Eqn:DividedPower}
f \circ F=\begin{cases}
\sum_{\nn,\nn'}\alpha_{\nn}\beta_{\nn'} X_1^{n'_1-n_1}\cdots X_m^{n'_m-n_m}& \mbox{ if } \nn' \geq \nn\\  
0 & \mbox{ otherwise }
          \end{cases}
\end{equation}
where $f=\sum_{\nn \in \N^m} \alpha_{\nn} x_1^{n_1}\cdots x_m^{n_m}$ and $F=\sum_{\nn' \in \N^m} \beta_{\nn'} X_1^{n_1'}\cdots X_m^{n_m'}$. Moreover, if $z_1,\ldots,z_m$ is a regular system of parameter of $R$ and $Z_1,\ldots,Z_m$ are the corresponding dual elements, that is, $z_i \circ Z_j=\delta_{ij},$ then $\D=K_{DP}[Z_1,\ldots,Z_m]$ (see \cite{Gab59}). 
$\D$ can be equipped with an internal multiplication which is commutative. But throughout this paper whenever we write $X^\nn F$ for $F \in \D$ we mean formal multiplication and not the multiplication in $\D.$ 

If $I \subseteq R$ is an ideal of $R,$ then $(R/I)^{\vee}:=\Hom_R(R/I,E_R(K))$ can be identified with the $R$-submodule $I^\perp$ of $\D$ which is defined as 
\[
 I^\perp:=\{F \in \D:f \circ F=0 \mbox{ for all }f\in I\}.
\]
This submodule is called the \emph{Macaulay's inverse system of $I$}. On the other hand, given an $R$-submodule $W$ of $\D,$ $W^\vee:=\Hom_R(W,E_R(K))$ is the ring $R/\ann_R(W)$ where 
\[
 \ann_R(W):=\{f \in R:f \circ F=0 \mbox{ for all }F \in W\}
\]
is an ideal of $R.$ If $I$ is a homogeneous ideal (resp. $W$ is generated by homogeneous polynomials) then $I^\perp$ is generated by homogeneous polynomials of $\D$ (resp. $\ann_R(W)$ is a homogeneous ideal of $R$). 

By Matlis duality, $R/I$ is Artinian (resp. an $R$-submodule $W$ of $\D$ is finitely generated) if and only if $ (R/I)^\vee \cong I^\perp $ is finitely generated (resp. $W^\vee \cong R/\ann_R(W) $ is Artinian) (see \cite[Section 3.2]{BH98}). Moreover, $\type(R/I)=\mu((R/I)^\vee)$ where $\mu(-)$ denotes the number of minimal generators as $R$-module.  
Emsalem in \cite[Proposition 2]{Ems78} and Iarrobino in \cite{Iar94}, based on the work of Macaulay in \cite{Mac1916}, gave a more precise description of the inverse system of Artinian level $K$-algebras:

\begin{proposition}  
\label{Prop:Emsalem-Iarrobino}
	There is a one-to-one correspondence between ideals $I$ such that $R/I$ is an Artinian level local (resp. graded) $K$-algebra of socle degree $s$ and type $\tau$ and $R$-submodules of $\D$ generated by $\tau$ polynomials (resp. homogeneous polynomials)  of degree $s$ having linearly independent forms of degree $s.$ The correspondence is defined as follows:
	\begin{eqnarray*}
		\left\{  \begin{array}{cc} I \subseteq R \mbox{ such that } R/I \mbox{ is an Artinian } \\
			\mbox{level (resp. graded) local $K$-algebra of }\\
			\mbox{ socle degree $s$ and type } \tau 
		\end{array} \right\}  \ &\stackrel{1 - 1}{\longleftrightarrow}& \ 
		\left\{ \begin{array}{cc} W \subseteq \D \mbox{ submodule generated by } \tau \mbox{ polynomials }\\
			\mbox{(resp. homogeneous polynomials) of degree} \\
			s  \mbox{ with l.i. forms  of degree } s 
		\end{array} \right\} \\
		R/I \ \ \ \ \ &\longrightarrow& \ \ \ \ \ I^\bot  \ \ \ \ \ \ \ \ \qquad \ \ \ \  \ \ \ \qquad \ \ \ \  \ \ \ \\
		R/\ann_R(W) \ \ \ \ \  &\longleftarrow& \ \ \ \ \  W \ \ \ \ \ \ \ \ \ \ \ \ \ \ \ \  \qquad \qquad \ \ \ \  \ \ \
	\end{eqnarray*}
\end{proposition}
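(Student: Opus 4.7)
The plan is to build directly on classical Macaulay duality, which gives a bijection between ideals $I \subseteq R$ with $R/I$ Artinian and finitely generated $R$-submodules $W \subseteq \D$ via $I \mapsto I^\perp$ and $W \mapsto \ann_R(W)$. What remains is to verify that under this bijection the \emph{level} condition with prescribed socle degree $s$ and type $\tau$ corresponds exactly to the stated structural condition on $W$.

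The key dictionary is as follows. Matlis duality already gives $\type(A) = \dim_K \socle(A) = \mu_R(I^\perp)$, as recalled in the excerpt. To track degrees, introduce the filtration $I^\perp_{\leq i} := \{F \in I^\perp : \deg F \leq i\} = (0 :_{I^\perp} \M^{i+1})$, which is Matlis-dual to the $\M$-adic filtration on $A$. In particular, the socle degree of $A$ is the largest $i$ with $I^\perp_{\leq i} \neq I^\perp_{\leq i-1}$, and dualising the short exact sequence $0 \to \m^s \to A \to A/\m^s \to 0$ gives $\dim_K I^\perp/I^\perp_{\leq s-1} = \dim_K \m^s$. Since the contraction action of $\M$ on $\D$ strictly decreases degree, $\M I^\perp \subseteq I^\perp_{\leq s-1}$, yielding a canonical surjection $I^\perp/\M I^\perp \twoheadrightarrow I^\perp/I^\perp_{\leq s-1}$.

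For the forward direction, suppose $A$ is level of socle degree $s$ and type $\tau$. The level condition $\socle(A) = \m^s$ gives $\tau = \dim_K \m^s = \dim_K I^\perp/I^\perp_{\leq s-1}$, forcing the canonical surjection above to be an isomorphism, equivalently $\M I^\perp = I^\perp_{\leq s-1}$. In particular every minimal generator of $I^\perp$ has top-degree exactly $s$, and the degree-$s$ components of any minimal generating set $F_1, \ldots, F_\tau$ are $K$-linearly independent, since a non-trivial relation $\sum_i \alpha_i (F_i)_s = 0$ would place $\sum_i \alpha_i F_i$ in $I^\perp_{\leq s-1} = \M I^\perp$, contradicting minimality.

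The reverse direction is symmetric. Given $W$ generated by $F_1, \ldots, F_\tau$ of degree $s$ with $K$-linearly independent degree-$s$ components, the same argument shows that $F_1, \ldots, F_\tau$ is a minimal generating set, so $\type(A) = \mu(W) = \tau$ for $A = R/\ann_R(W)$, and the socle degree is $s$. The surjection $W/\M W \twoheadrightarrow W/W_{\leq s-1}$ then has both sides of dimension $\tau$ (the right-hand side being spanned by the linearly independent classes $[F_i]$), hence is an isomorphism; this translates back to $\socle(A) = \m^s$, giving levelness. The graded case is easier, as homogeneity of $I$ makes $I^\perp$ graded and ``top forms'' become literally the minimal generators; the main subtlety is in carefully distinguishing polynomials from their top-degree components in the inhomogeneous local case, which the filtration $I^\perp_{\leq i}$ is precisely designed to encode.
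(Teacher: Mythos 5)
The paper does not supply a proof of Proposition~\ref{Prop:Emsalem-Iarrobino}: it is recalled as a classical result, with a citation to Emsalem and Iarrobino. Your argument correctly fills in this gap and does so along the standard lines that those sources follow (Macaulay--Matlis duality plus degree bookkeeping). In particular: the identity $I^\perp_{\leq i} = (0:_{I^\perp}\M^{i+1})$ holds because contraction by an order-$(i{+}1)$ element kills everything of degree $\leq i$, while any surviving top term of degree $>i$ would be detected by a suitable monomial in $\M^{i+1}$; the dual identification $(I^\perp/I^\perp_{\leq s-1})^\vee \cong \m^s$ and $(I^\perp/\M I^\perp)^\vee\cong\socle(A)$ are the ones needed, and the inclusion $\M I^\perp\subseteq I^\perp_{\leq s-1}$ makes the canonical surjection compare the two; equality of the $K$-dimensions then translates the level condition $\socle(A)=\m^s$ into ``generators of degree $s$ with independent top forms'' and conversely. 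The minimality argument in both directions is also sound: a nontrivial relation among the degree-$s$ forms would push a combination into $\M\cdot(\text{module})$, contradicting minimality, and vice versa. The graded case is the degenerate version where everything is homogeneous. This is a correct proof of the proposition, consistent with what the cited references establish.
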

In particular, an Artinian Gorenstein $K$-algebra of socle degree $s$ corresponds to a cyclic $R$-submodule 
generated by a non-zero polynomial $F$ of degree $s$ in $\D.$ 

\vskip 2mm
Consider the following exact pairing of $K$-vector spaces induced by the contraction action:
\[
\xymatrix@R=.3pc{
	\langle\ , \ \rangle: R \times \D \ar[r] & K \\
	\qquad (f,F) \ar@{|->}[r]& (f\circ F)(0).
}
\]
Since $(\mathcal M I \circ F)(0)=0$ if and only if $I \circ F=0,$ it follows that
\[
 I^\perp=\{F \in \D:\langle f,F \rangle \mbox{ for all }f \in I\}.
\]

Using this bilinear pairing we can deduce the following property of $\ann_R(W)$ (see \cite[Proposition 12.9]{Cil94}).
 
\begin{proposition}
\label{Prop:BasicPropertiesOfAnn}
 Let $W_1, ~W_2$ be finitely generated $R$-submodules of $\D.$ Then 
 \[
  \ann_R(W_1 \cap W_2)=\ann_R(W_1) + \ann_R(W_2).
 \]

\end{proposition}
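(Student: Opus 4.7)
The plan is to prove the two inclusions separately. The inclusion $\ann_R(W_1) + \ann_R(W_2) \subseteq \ann_R(W_1 \cap W_2)$ is immediate from the definition: any $f \in \ann_R(W_i)$ satisfies $f \circ F = 0$ for every $F \in W_i$ and hence, \emph{a fortiori}, for every $F \in W_1 \cap W_2$; the sum of two such elements still annihilates $W_1 \cap W_2$.

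For the reverse inclusion I would set $I_i := \ann_R(W_i)$ and invoke the duality recalled in Section \ref{Section:DividedRing}. Because each $W_i$ is a finitely generated $R$-submodule of $\D$, Matlis duality tells us that $R/I_i$ is Artinian and that the double-annihilator identity holds: $I_i^{\perp} = W_i$. Next I would verify the elementary identity $I_1^{\perp} \cap I_2^{\perp} = (I_1 + I_2)^{\perp}$, which follows at once from the bilinearity of the contraction pairing $\langle\,\cdot\,,\,\cdot\,\rangle$ introduced above: an element $F \in \D$ is killed by every element of $I_1$ and every element of $I_2$ if and only if it is killed by every element of $I_1 + I_2$.

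Combining these two observations, $W_1 \cap W_2 = (I_1 + I_2)^{\perp}$. Since $R/(I_1 + I_2)$ is Artinian (being a quotient of the Artinian ring $R/I_1$), applying the double-annihilator identity a second time yields
\[
\ann_R(W_1 \cap W_2) \;=\; \ann_R\bigl((I_1 + I_2)^{\perp}\bigr) \;=\; I_1 + I_2 \;=\; \ann_R(W_1) + \ann_R(W_2),
\]
which is exactly the reverse inclusion. The only nontrivial ingredient is the perfect-pairing equality $\ann_R(I^{\perp}) = I$ for ideals $I$ with Artinian quotient; this is precisely the content of Matlis duality recalled at the start of this section, and it is what allows the swap between intersection of inverse systems and sum of ideals. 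Since this identity is already in hand, no essential obstacle remains.
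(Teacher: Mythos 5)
Your proof is correct. The paper itself does not prove Proposition \ref{Prop:BasicPropertiesOfAnn} but merely cites it as a consequence of the bilinear pairing (referring the reader to Ciliberto's textbook); your argument via Matlis duality is the standard route and fills that gap correctly. You establish the easy inclusion directly, and for the reverse inclusion you use three facts, all of which are indeed available from Section \ref{Section:DividedRing}: (i) for a finitely generated $R$-submodule $W\subseteq\D$ one has $\ann_R(W)^\perp=W$ and $R/\ann_R(W)$ is Artinian; (ii) $(I_1+I_2)^\perp=I_1^\perp\cap I_2^\perp$ by bilinearity of the pairing; and (iii) for an ideal $I$ with $R/I$ Artinian, $\ann_R(I^\perp)=I$. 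Chaining these yields $\ann_R(W_1\cap W_2)=\ann_R\bigl((I_1+I_2)^\perp\bigr)=I_1+I_2$, exactly as you write. The one auxiliary observation worth making explicit (and which you use implicitly when applying (iii)) is that $R/(I_1+I_2)$ is Artinian because it is a quotient of the Artinian ring $R/I_1$; you do note this, so the argument is complete.
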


\vskip 2mm 
If $\{F_1,\ldots,F_t\} \subseteq \D$ is a set of polynomials, we will denote by $\langle F_1,\ldots,F_t \rangle$ the $R$-submodule of $\D$ generated by $F_1,\ldots,F_t$, i.e., the $K$-vector space spanned by $F_1,\ldots,F_t$ and by the corresponding derivatives of all orders. 
If $W \subseteq \D$ is generated by polynomials $F_1,\ldots,F_t,$ with $F_i \in \D, $ then we write $\ann_R(W)= \ann_R\langle F_1,\ldots,F_t\rangle .$

\section{Inverse System of Level $K$-algebras}
\label{Section:MainResult}

In \cite{ER17} authors established a one-to-one correspondence between  Gorenstein $K$-algebras of dimension $d>0$ and $G_d$-admissible submodules of $\D.$ The main goal of this section is to extend their result and give the structure of the inverse system of level $K$-algebras of positive dimension.

First we construct the inverse system of a $d$-dimensional level $K$-algebra.
Then, using the properties of this inverse system, we define $L^\tau_d$--admissible submodule of $\D$. Finally, we show that the Matlis dual of an $L^\tau_d$-admissible submodule is indeed a level ring.

\vskip 2mm
We fix some notations. Let $R=K[x_1,\ldots,x_m]$ and $\uz$ a regular linear sequence for $A=R/I$, or $R=K[\![x_1,\ldots,x_m]\!]$ and $\uz:=z_1,\ldots,z_d $ a sequence of general elements in $ \mathcal{M}$. 
If $R=K[\![x_1,\ldots,x_m]\!]$, then by Fact \ref{Fact:GeneralElements} $(\uz)$ is a minimal reduction of $\mathcal{M}.$ Therefore from \cite[Corollary 8.3.6]{HS06} it follows that $\uz$ is part of a regular system of parameter for $R.$ Hence in both cases $\uz$ can be extended to a minimal system of generator of $\mathcal{M},$ say $z_1,\ldots,z_d,\ldots,z_m.$ We write $Z_1,\ldots,Z_m$ for the dual elements in $\D$ corresponding to $z_1,\ldots,z_m,$ that is, $z_i \circ Z_j=\delta_{ij}.$ Hence $\D=K[Z_1,\ldots,Z_m]$ (see \cite{Gab59}).  

We recall the following proposition from \cite{ER17} that is needed to construct the inverse system of level $K$-algebras. Notice that the proof of this given in \cite{ER17} does not depend on the ring being Gorenstein, and hence we state it without proof.  

\begin{proposition}\cite[Proposition 3.2]{ER17} 
\label{Prop:DualExactSeq}
Let $R=K[\![x_1,\ldots,x_m]\!]$ (resp. $R=K[x_1,\ldots,x_m]$) and $A=R/I$ be Cohen-Macaulay of dimension $d>0.$ Let $\uz:=z_1,\ldots,z_d \in \mathcal{M}$ be a sequence of general elements (resp. regular linear sequence for $A$).
For any $\nn \in \N^d_+, $ let $T_{\nn}=(I+(\uz^\nn))^{\perp}.$
\begin{enumerate}
 \item \label{Prop:DualExactSeq:d=1}
 Assume $d=1.$ Then for all $n \geq 2,$ there is an exact sequence of finitely generated $R$-submodules of $\D$
\[
  0 \longrightarrow T_{1} \longrightarrow T_{n} \xrightarrow{z_1\circ} T_{{n-1}} \longrightarrow 0.
 \]
\item \label{Prop:DualExactSeq:d>1}
Assume $d \geq 2.$ For all $\nn \in \N^d_+$ such that $\nn \geq {\bf 2}_d$ there is an exact sequence of finitely generated $R$-submodules of $\D$ 
\[
\displaystyle
  0 \to  T_{{\bf 1}_d} \to  T_{\nn} \to  \bigoplus_{k=1}^d T_{{\nn-\ee_k}} \to  \bigoplus_{1 \leq i < j \leq d } T_{{\nn-\ee_i-\ee_j}}.
\]
\end{enumerate}
 \end{proposition}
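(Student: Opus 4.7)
My plan is to prove both parts by constructing an exact sequence of $R$-modules whose Matlis dual is the asserted sequence of submodules of $\D$. The key observation throughout is that for ideals $J \subseteq J'$ of $R$ and any $h \in R$ with $hJ \subseteq J'$, the Matlis dual of the multiplication map $\cdot h \colon R/J \to R/J'$ is the contraction $h\circ \colon (J')^\perp \to J^\perp$; this follows from the pairing identity $\langle hg, F\rangle = \langle g, h\circ F\rangle$ recalled in Section \ref{Section:DividedRing}.

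For part (a), I would dualize the short exact sequence
\[
0 \to R/(I + (z_1^{n-1})) \xrightarrow{\cdot z_1} R/(I + (z_1^n)) \to R/(I + (z_1)) \to 0,
\]
in which the leftmost map is injective by regularity of $z_1$ on $R/I$ (via $(I + (z_1^n)):z_1 = I + (z_1^{n-1})$) and exactness in the middle is a direct ideal computation. For part (b), the analogous input is the four-term complex
\[
\bigoplus_{i<j} R/(I + (\uz^{\nn-\ee_i-\ee_j})) \xrightarrow{\partial_2} \bigoplus_{k} R/(I + (\uz^{\nn-\ee_k})) \xrightarrow{\partial_1} R/(I + (\uz^\nn)) \to R/(I + (\uz^{{\bf 1}_d})) \to 0,
\]
where the rightmost map is the natural surjection, $\partial_1((\bar g_k)_k) = \sum_k z_k \bar g_k$, and $\partial_2$ is the Koszul-style differential sending $\bar h_{ij}$ (in position $(i,j)$) to the tuple with $-z_j \bar h_{ij}$ in position $i$ and $z_i \bar h_{ij}$ in position $j$ (zero elsewhere). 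Each multiplication-by-$z$ map here is well-defined on the indicated quotients by a routine containment check on the ideals.

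Exactness of this complex at $R/(I+(\uz^\nn))$ is immediate, as both the kernel of the rightmost map and the image of $\partial_1$ equal $(I + (z_1,\ldots,z_d))/(I + (\uz^\nn))$. The main obstacle, and heart of the argument, is exactness at $\bigoplus_k R/(I + (\uz^{\nn-\ee_k}))$. Given $(\bar g_k)$ in its kernel, I would write $\sum_k z_k g_k = \alpha + \sum_i z_i^{n_i}\beta_i$ with $\alpha \in I$, and replace each $g_k$ by $g'_k := g_k - z_k^{n_k-1}\beta_k$; since $z_k^{n_k-1} \in (\uz^{\nn-\ee_k})$, this leaves $\bar g_k$ unchanged, and now $\sum_k z_k g'_k \in I$. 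At this point I would invoke the vanishing of the first Koszul homology $H_1(\uz;R/I) = 0$, which holds because $\uz$ is a regular sequence on $R/I$ (in the graded case by hypothesis; in the local case because general elements are superficial by Fact \ref{Fact:GeneralElements} and hence $R/I$-regular by Cohen--Macaulayness). The standard Koszul-syzygy description then yields $h_{ij} \in R$ with $g'_k \equiv -\sum_{j>k} z_j h_{kj} + \sum_{i<k} z_i h_{ik} \pmod I$, exhibiting $(\bar g_k) = \partial_2((\bar h_{ij}))$. Applying the (exact) Matlis dual to this now-exact complex produces the claimed sequence in part (b), and finite generation of each term follows because every $R/(I+(\uz^{\nn'}))$ appearing is Artinian, so its Matlis dual is a finitely generated $R$-module.
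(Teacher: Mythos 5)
The paper does not prove this proposition: it is quoted verbatim from Elias--Rossi \cite[Proposition 3.2]{ER17}, with the only comment being that the proof given there does not use the Gorenstein hypothesis. There is therefore no in-paper proof to compare against, and your argument must be judged on its own terms.

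On its own terms it is correct. For part (a) the colon computation $(I+(z_1^n)):z_1=I+(z_1^{n-1})$, valid because $z_1$ is regular on $R/I$, gives the required short exact sequence, and Matlis duality turns multiplication by $z_1$ into contraction by $z_1$ and the quotient map into the inclusion $T_1\hookrightarrow T_n$. For part (b) the four-term complex is well-defined (your ideal containments $z_k(I+(\uz^{\nn-\ee_k}))\subseteq I+(\uz^\nn)$ and $z_j(I+(\uz^{\nn-\ee_i-\ee_j}))\subseteq I+(\uz^{\nn-\ee_i})$ are the right ones), exactness at $R/(I+(\uz^\nn))$ is as you say, and the delicate step at the middle term is handled correctly: after replacing $g_k$ by $g_k-z_k^{n_k-1}\beta_k$ you are reduced to a first syzygy of $\uz$ over $R/I$, and $H_1(\uz;R/I)=0$ because $\uz$ is a regular sequence on $R/I$ (by hypothesis in the graded case; in the local case because a general sequence is superficial by Fact \ref{Fact:GeneralElements} and hence regular in the Cohen--Macaulay ring $A$). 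Matlis duality being exact and sending Artinian modules to finitely generated modules then yields the stated sequence with finitely generated terms. The only cosmetic point worth flagging is a global sign: your $\partial_2$ dualizes to $(F_k)\mapsto(-z_j\circ F_i+z_i\circ F_j)_{i<j}$, which is the negative of the map $\phi^*_{\nn'}$ the paper uses later in the proof of Proposition \ref{Prop:ConstructionOfAdmissible}; this has no bearing on exactness.
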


We recall the following basic lemma, which will be used later. 
 
\begin{lemma}
\label{Lemma:Generates}
Let $f:M\to N$ be an epimorphism between two non-zero $R$-modules  $M$ and $N$ 
that are minimally generated by $\nu$ elements. 
Let $m_1,\ldots,m_\nu$ be such that $f(m_1),\ldots,f(m_\nu)$ generate $N$. 
Then $m_1,\ldots,m_\nu$ generate $M$.
\end{lemma}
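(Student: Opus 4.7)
The plan is to reduce the statement to an application of Nakayama's lemma (graded Nakayama in the homogeneous case). Throughout the paper $R$ is either a power series ring or a standard graded polynomial ring, so in both cases $R$ has a unique (homogeneous) maximal ideal $\mathcal{M}$ with residue field $K$, and finitely generated $R$-modules (or graded modules) behave well with respect to $\mathcal{M}$.

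First I would pass to the quotient by $\mathcal{M}$ and consider the induced $K$-linear map
\[
\bar{f} \colon M/\mathcal{M}M \longrightarrow N/\mathcal{M}N.
\]
Since $f$ is surjective, so is $\bar{f}$. By the hypothesis that $M$ and $N$ are minimally generated by $\nu$ elements, both $M/\mathcal{M}M$ and $N/\mathcal{M}N$ are $K$-vector spaces of dimension exactly $\nu$ (this is the standard characterization of the minimal number of generators via Nakayama). A surjection between finite-dimensional $K$-vector spaces of the same dimension is an isomorphism, so $\bar{f}$ is an isomorphism.

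Next I would translate the hypothesis on the $m_i$ across this isomorphism. Because $f(m_1),\ldots,f(m_\nu)$ generate $N$, their classes $\overline{f(m_1)},\ldots,\overline{f(m_\nu)}$ span the $\nu$-dimensional space $N/\mathcal{M}N$, hence form a $K$-basis. Since $\bar{f}(\overline{m_i})=\overline{f(m_i)}$ and $\bar{f}$ is an isomorphism, the classes $\overline{m_1},\ldots,\overline{m_\nu}$ form a $K$-basis of $M/\mathcal{M}M$, and in particular generate it as a $K$-vector space.

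Finally, applying Nakayama's lemma (in the local case) or graded Nakayama (in the graded case) to the submodule $\langle m_1,\ldots,m_\nu\rangle \subseteq M$, I conclude that $m_1,\ldots,m_\nu$ generate $M$. There is no real obstacle here; the only point to be careful about is to invoke the correct form of Nakayama in each of the two settings, and to verify that $\dim_K M/\mathcal{M}M = \nu = \dim_K N/\mathcal{M}N$ so that the surjection $\bar f$ is forced to be an isomorphism.
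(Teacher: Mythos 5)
Your proof is correct and follows the same strategy as the paper's: reduce modulo the (homogeneous) maximal ideal $\mathcal{M}$, use that both $M/\mathcal{M}M$ and $N/\mathcal{M}N$ are $\nu$-dimensional $K$-vector spaces to conclude the classes $\overline{m_1},\ldots,\overline{m_\nu}$ span $M/\mathcal{M}M$, and finish with Nakayama's lemma. The only cosmetic difference is that you note $\bar f$ is an isomorphism and pull back a basis, while the paper argues directly that linear independence of the images forces linear independence of the $\overline{m_i}$; these are the same observation.
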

\begin{proof} \pushQED{\qed}
As $f: M \to N$ is surjective, $\bar{f}:M/\M M \to N/\M N$ is also surjective. Since $N/\M  N$ is generated by $f(m_1)+\M N, \ldots,f(m_\nu)+\M N$ as a $K$-vector space and $\dim_K N/\M N=\nu,$  $f(m_1)+\M N, \ldots,f(m_\nu)+\M N$ are $K$-linearly independent. This implies that $m_1+\M M,\ldots,m_\nu + \M M$ are also $K$-linearly independent and hence $m_1+\M M,\ldots,m_\nu + \M M$ generate $M/\M M. $ Therefore by Nakayama's lemma $m_1,\ldots,m_\nu$ generate $M.$ \qedhere
\end{proof} 
 
In the following proposition we construct the inverse system of level $K$-algebras satisfying certain conditions.

\begin{proposition}
\label{Prop:ConstructionOfAdmissible}
Let $R=K[\![x_1,\ldots,x_m]\!]$ with $\mathrm{char}(K)=0$ (resp. $R=K[x_1,\ldots,x_m]$), $A=R/I$ and $\uz:=z_1,\ldots,z_d \in \mathcal{M}$ a sequence of general elements (resp. regular linear sequence for $A=R/I$). 
Let $s$ be the index of nilpotency (resp. Castelnuovo-Mumford regularity) of $A.$ 
Suppose that $A$ is a level $K$-algebra of dimension $d>0$ and type $\tau.$ Then there exists a system of (resp. homogeneous) generators $\{H_\nn^j:j=1,\dots,\tau,\nn \in \N_+^d\}$ of $I^\perp$ such that 
\begin{asparaenum}
 \item for all $\nn \in \N^d_+$ 
 \[
 W_{\nn}:=(I+(\uz^\nn))^\perp=\langle H_\nn^j : j=1,\dots,\tau\rangle
 \]  
 where $\deg H_\nn^1=\deg H_\nn^2=\cdots=\deg H_\nn^\tau=s+|\nn|-d$ and the forms of degree $s+|\nn|-d$ of $H_\nn^1,\ldots,H_\nn^\tau$ are linearly independent; 
 \item for all $\nn \in {\mathbb{N}_+^d}$ and $j=1,\ldots,\tau$ and $i=1,\ldots,d$
  \begin{equation*}
   z_i \circ H_\nn^j =\begin{cases}
                     H_{\nn-\ee_i}^j & \mbox{ if } \nn-\ee_i >{\mathbf{0}_d}\\
                     0 & \mbox{ otherwise}.
                    \end{cases}
  \end{equation*}
\end{asparaenum}
\end{proposition}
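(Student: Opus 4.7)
\emph{Plan.} The argument goes by induction on $|\nn|$. For the base case $|\nn|=d$, i.e., $\nn={\bf 1}_d$, Proposition~\ref{Prop:QuotientLevel} shows $A/(\uz)$ is Artinian level of type $\tau$ and socle degree $s$, so the Emsalem--Iarrobino correspondence (Proposition~\ref{Prop:Emsalem-Iarrobino}) delivers a minimal generating set $H^1_{{\bf 1}_d},\ldots,H^\tau_{{\bf 1}_d}$ of $T_{{\bf 1}_d}$ consisting of polynomials (homogeneous in the graded case) of degree $s$ with linearly independent forms in degree $s$. Since $z_i\in I+(\uz)$, we have $z_i\circ H^j_{{\bf 1}_d}=0$, so condition (b) holds automatically at $\nn={\bf 1}_d$.

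Now assume $|\nn|>d$ and that all $H^j_\m$ with $|\m|<|\nn|$ have been constructed satisfying (a) and (b). By Proposition~\ref{Prop:QuotientLevel}, $A/(\uz^\nn)$ is Artinian level of type $\tau$ and socle degree $s+|\nn|-d$, so Proposition~\ref{Prop:Emsalem-Iarrobino} guarantees that $T_\nn$ is minimally generated by $\tau$ polynomials of degree $s+|\nn|-d$ with linearly independent forms in that top degree. The task is to produce a generating set compatible with those already built below. I would form the tuple $(H^j_{\nn-\ee_k})_{k=1}^d$, with the convention $H^j_{\nn-\ee_k}=0$ when $\nn-\ee_k\notin\N^d_+$; the inductive hypothesis then gives $z_l\circ H^j_{\nn-\ee_k}=H^j_{\nn-\ee_k-\ee_l}=z_k\circ H^j_{\nn-\ee_l}$ for $k\ne l$, placing the tuple in the Koszul-type compatibility kernel featured in Proposition~\ref{Prop:DualExactSeq}. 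When $d=1$, part~\eqref{Prop:DualExactSeq:d=1} supplies the surjection $z_1\circ\colon T_n\twoheadrightarrow T_{n-1}$ and the lift is immediate; when $d\ge 2$ and $\nn\ge{\bf 2}_d$, part~\eqref{Prop:DualExactSeq:d>1} produces $H^j_\nn\in T_\nn$ with $z_k\circ H^j_\nn=H^j_{\nn-\ee_k}$ for every $k$.

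I expect the main obstacle to be the mixed regime $d\ge 2$ with some $n_k=1$, which sits outside the hypothesis of Proposition~\ref{Prop:DualExactSeq}\eqref{Prop:DualExactSeq:d>1}. The saving observation is that for such $k$ one has $z_k\in(\uz^\nn)$, hence $z_k\circ G=0$ for every $G\in T_\nn$, so only the indices in $S:=\{k:n_k\ge 2\}$ need to carry compatibility data. I would handle this either by proving the natural partial-Koszul analogue of Proposition~\ref{Prop:DualExactSeq}\eqref{Prop:DualExactSeq:d>1} indexed by $S$, or by reducing to the quotient $A/(z_k:n_k=1)$ of dimension $|S|$---which is level by Proposition~\ref{Prop:GLQutotientByHomEle} in the graded case and by its local analogue---and invoking the inductive hypothesis on dimension there to produce the lift back in $A$. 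This step is where the level case genuinely differs from the Gorenstein setting of \cite{ER17}: with $\tau>1$ one must track $\tau$ parallel liftings simultaneously, and a naive choice of preimage for one $z_i$ will not in general satisfy the remaining $z_k$-compatibilities.

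Once $H^j_\nn$ is in hand, the remaining requirements verify directly. Its degree equals $s+|\nn|-d$: the upper bound holds because $T_\nn$ is generated in that degree, the lower because $z_i\circ H^j_\nn=H^j_{\nn-\ee_i}$ has degree $s+|\nn|-d-1$ for any $i$ with $n_i\ge 2$. Linear independence of the top forms $F^j$ of $H^j_\nn$ then follows because $z_i\circ F^j$ recovers the top form of $H^j_{\nn-\ee_i}$, and those are independent by induction. Finally, Lemma~\ref{Lemma:Generates} applied to the surjection $z_i\circ\colon T_\nn\twoheadrightarrow T_{\nn-\ee_i}$---itself obtained by Matlis-dualizing the short exact sequence $0\to A/(\uz^{\nn-\ee_i})\xrightarrow{\cdot z_i}A/(\uz^\nn)\to A/(\uz^{\nn''})\to 0$, where $\nn''$ resets the $i$-th coordinate of $\nn$ to $1$---upgrades the fact that the $H^j_{\nn-\ee_i}$ generate $T_{\nn-\ee_i}$ to the fact that the $H^j_\nn$ generate $T_\nn$. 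The statement that $\{H^j_\nn\}_{j,\nn}$ generates the whole of $I^\perp$ then follows from $I^\perp=\bigcup_\nn T_\nn$, which holds since each element of $\D$ has finite degree and is thus killed by $z_i^N$ for $N\gg 0$.
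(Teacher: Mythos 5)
Your plan follows the same route as the paper's proof: base case via Emsalem--Iarrobino at $\nn={\bf 1}_d$, Koszul-type exact sequences from Proposition~\ref{Prop:DualExactSeq} to lift the $\tau$ parallel families, and Lemma~\ref{Lemma:Generates} to upgrade ``the lifts generate'' from $W_{\nn-\ee_i}$ to $W_\nn$. You have correctly identified the one nontrivial juncture---the mixed regime where $d\ge 2$ but some $n_k=1$, which falls outside the hypotheses of Proposition~\ref{Prop:DualExactSeq}\eqref{Prop:DualExactSeq:d>1}---and your proposed fix of passing to the quotient by $(z_k:n_k=1)$ and invoking the exact sequence in the lower ``effective dimension'' $|S|$ is precisely what the paper does: it sets $J:=I+(z_{l+1},\dots,z_d)$ after permuting so that the support is $\{1,\dots,l\}$, reads off $T_{\nn'}=(J+(\uz'^{\nn'}))^\perp=W_\nn$, and applies Proposition~\ref{Prop:DualExactSeq}\eqref{Prop:DualExactSeq:d>1} over that $l$-dimensional quotient. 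The only cosmetic difference is the induction scheme: you induct on $|\nn|$ alone, whereas the paper inducts lexicographically on the pair $(|\nn|_+,\ |\nn|-d+|\nn|_+)$; both orderings are well-founded and both make all the needed lower layers available, so your coarser version works just as well and is arguably simpler. Your description of the surjection $z_i\circ\colon T_\nn\twoheadrightarrow T_{\nn-\ee_i}$ as the Matlis dual of the multiplication injection is also correct and is exactly the surjectivity the paper uses (implicitly) when it feeds Lemma~\ref{Lemma:Generates}.
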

\begin{proof}
We prove proposition for the local case. The proof is similar for the graded case. 

For all $\nn \in \N^d_+,$ let $I_{\nn}:=I+(\uz^\nn),$ $R_\nn:=R/I_{\nn}=A/(\uz^\nn)$ and $W_{\nn}:=I_{\nn}^\perp.$ Since $\uz$ is a sequence of general elements in $\mathcal{M},$ $\uz +I:=z_1+I,\ldots,z_d+I$ is a sequence of general elements in $\m.$ Therefore by Fact \ref{Fact:GeneralElements}, $(\uz)A$ is general minimal reduction of $\m.$ Since $A$ is level, $R_{\mathbf{1}_d}$ is an Artinian local level $K$-algebra of type $\tau.$ 
Note that $\socdeg (R_{\mathbf{1}_d})=s.$ Hence by 
Proposition \ref{Prop:Emsalem-Iarrobino} there exist polynomials $H_{{\bf 1}_d}^1,  H_{{\bf 1}_d}^2,\ldots,H_{{\bf 1}_d}^\tau$ of degree $s$ such that the forms of degree $s$ of $H_{{\bf 1}_d}^1,\ldots, H_{{\bf 1}_d}^\tau$ are linearly independent  and 
\[W_{{\bf 1}_d}=(I+(\uz))^\perp=\langle H_{{\bf 1}_d}^j: j=1,\ldots, \tau \rangle.\] 
For $\nn=(n_1,\ldots,n_d)\in \N^d_+, $ let $|\nn|_+$ denote the cardinality of the set $\{n_i:n_i \geq 2\}.$  
We put the lexicographic order on the set $ \{1,\ldots,d\} \times \N_+,$ that is $(i_1,j_1) < (i_2,j_2)$ if $i_1 < i_2 $ OR if $i_1=i_2$ and $ j_1 < j_2.$ We construct $H_\nn^j,$ $\nn\in \N^d_+,~j=1,\ldots,\tau,$ as required using induction on the pair $(|\nn|_+,|\nn|-d+|\nn|_+) \in \{1,\ldots,d\} \times \N_+$.
Assume that $|\nn|_+=1.$ After a permutation, we may assume that $\nn=(n,1,\ldots,1)$ with $n \geq 2.$ 
Since $|\nn-\ee_1|_+\leq 1,$ we have
\[
 (|\nn-\ee_1|_+,|\nn-\ee_1|-d+|\nn-\ee_1|_+) < (|\nn|_+,|\nn|-d+|\nn|_+).
 \]
 Hence by induction for all $\nn' \leq \nn-\ee_1$ and $j=1,\dots,\tau$ there exist $H_{\nn'}^j \in \D$ such that $\{H_{\nn'}^j:{\nn' \in \mathbb{N}_+^d,~ j=1,\ldots,\tau,\nn' \leq \nn-\ee_1}\}$ satisfy the required conditions.
Let $J=I+(z_2,\ldots,z_d).$ 
Now, by Proposition \ref{Prop:DualExactSeq}\eqref{Prop:DualExactSeq:d=1}, we get an exact sequence
\[
 0 \longrightarrow W_{{\bf 1}_d}= (J+(z_1))^\perp\longrightarrow W_{\nn}=(J+(z_1^n))^\perp \overset{z_1 \circ}{\longrightarrow} W_{\nn-\ee_1}= (J+(z_1^{n-1}))^\perp\longrightarrow 0.
\]
Therefore for each $j=1,\ldots,\tau,$ there exist polynomials $H_\nn^j \in W_{\nn}$ such that 
\begin{equation}
\label{Eqn:Induction1}
z_1 \circ H_\nn^j = H_{\nn-\ee_1}^{j}. 
\end{equation}
As $z_i \in I_{\nn}$ for $i \neq 1,$ $z_i \circ H_{\nn}^j=0$ if $i \neq 1.$ 

\noindent  Since $\socdeg (R_\nn)=s+|\nn|-d$ by Proposition \ref{Prop:SocDegOfR_nn}, $\deg H_{\nn}^j \leq s+|\nn|-d$ for all $j=1,\ldots,\tau.$ Moreover, as $\deg H_{\nn-\ee_1}^j=s+|\nn|-d-1,$ by \eqref{Eqn:Induction1} $\deg H_{\nn}^j \geq s+|\nn|-d.$ Hence for all $j=1,\ldots,\tau$ 
\[
\deg H_{\nn}^j = s+|\nn|-d.
\]  
Also, since the forms of degree $s+|\nn|-d-1$ of $H_{\nn-\ee_1}^1,\ldots,H_{\nn-\ee_1}^\tau$ are linearly independent, the forms of degree $s+|\nn|-d$ of $H_\nn^1,\ldots,H_\nn^\tau$ are also linearly independent. 

\noindent
By Proposition \ref{Prop:Emsalem-Iarrobino} $W_\nn$ and $W_{\nn-\ee_1}$ are minimally generated by $\tau$ elements. Hence by Lemma \ref{Lemma:Generates}  $W_\nn=\langle H_\nn^j:j=1,\ldots,\tau \rangle.$ 
\vskip 2mm
Let $l:=|\nn|_+ \geq 2.$ After a permutation, we may assume that $\nn=(n_1,\ldots,n_l,1,\ldots,1)$ with $n_i \geq 2$ for $i=1,\ldots,l.$ We set $\uz'=z_1,\ldots,z_l$ and $\nn'=(n_1,\ldots,n_l)\in \N^l_+$ and $J=I+(z_{l+1},\ldots,z_d).$ By Proposition 
\ref{Prop:DualExactSeq}\eqref{Prop:DualExactSeq:d>1} we get an exact sequence
\begin{equation}\label{Eqn:exactSequence}
0 \longrightarrow T_{{\bf 1}_l} \longrightarrow T_{\nn'} \longrightarrow  \bigoplus_{k=1}^l T_{\nn'-\ee_k} \overset{\phi^*_{\nn'}}{\longrightarrow} \bigoplus_{1 \leq i < j \leq l } T_{{\nn'-\ee_i-\ee_j}}.
\end{equation}
where $T_{{\bf 1}_l}=W_{{\bf 1}_d}, T_{\nn'}=(J+(\uz'^{\nn'}))^\perp=W_\nn,T_{\nn'-\ee_k}=(J+(\uz'^{\nn'-\ee_k}))^\perp=W_{\nn-\ee_k}$ and 
$T_{\nn'-\ee_i-\ee_j}=(J+(\uz'^{\nn'-\ee_i-\ee_j}))^\perp=W_{\nn-\ee_i-\ee_j}$ and $\phi^*_{\nn'}$ is 
defined as
\[
 \phi^*_{\nn'}(v_1,\ldots,v_l)=(z_k \circ v_i-z_i \circ v_k: 1\leq i<k\leq l).
\]
Since for all $k=1,\ldots,l$
\[
 (|\nn-\ee_k|_+,|\nn-\ee_k|-d+|\nn-\ee_k|_+) < (|\nn|_+,|\nn|-d+|\nn|_+),
 \]
by induction there exist $H_{\nn-\ee_k}^j \in \D,$ $j =1,\ldots, \tau,$ such that 
\begin{itemize}
 \item $W_{\nn-\ee_k}=\langle H_{\nn-\ee_k}^j:j=1,\ldots,\tau\rangle,	$ $\deg H_{\nn-\ee_k}^j=s+|\nn|-d-1$ for all $j=1,\ldots,\tau$ and the forms of degree $s+|\nn|-d-1$ of $H_{\nn-\ee_k}^1,\ldots,H_{\nn-\ee_k}^\tau$ are linearly independent;
  \item $z_i \circ H_{\nn-\ee_k}^j=H_{\nn-\ee_k-\ee_i}^j $ for all $i=1,\ldots,l$ and $i \neq k.$
  \end{itemize}
Therefore for each $j=1,\ldots,\tau,$ 
\begin{eqnarray*}
 \phi^*_{\nn'}(H_{\nn-\ee_1}^j,\ldots,H_{\nn-\ee_l}^j)&=&(z_k \circ H_{\nn-\ee_i}^j - z_i \circ H_{\nn-\ee_k}^j: 1\leq i<k\leq l)\\
 &=&(H_{\nn-\ee_i-\ee_k}^j - H_{\nn-\ee_k-\ee_i}^j:1\leq i<k\leq l)=0.
\end{eqnarray*}
Hence
\[
 (H_{\nn-\ee_1}^j,\ldots,H_{\nn-\ee_l}^j) \in \ker (\phi^*_{\nn'}).
 \]
Therefore by exactness of the sequence \eqref{Eqn:exactSequence}, we conclude that there exist $H_\nn^j \in W_\nn$ such that for all $k=1,\ldots,l,$
 \begin{equation}
  \label{Eqn:Induction2}
z_k \circ H_\nn^j=H_{\nn-\ee_k}^j.  
 \end{equation}
Moreover, as $z_k \in I_{\nn}$ for $k>l,$ $z_k \circ H_{\nn}^j=0$ if $k >l.$

\noindent
Since $\socdeg (R_\nn)=s+|\nn|-d$ by Proposition \ref{Prop:SocDegOfR_nn}, $\deg H_{\nn}^j\leq s+|\nn|-d.$ Also, as $\deg H_{\nn-\ee_k}^j=s+|\nn|-d-1$, by \eqref{Eqn:Induction2} $\deg H_\nn^j\geq s+|\nn|-d.$ Hence $\deg H_{\nn}^j=s+|\nn|-d$ for all $j=1,\ldots,\tau.$ 
As the forms of degree $s+|\nn|-d-1$ of  $H_{\nn-\ee_1}^1,\ldots,H_{\nn-\ee_1}^{\tau}$ are linearly independent, the forms of degree $s+|\nn|-d$ of $H_\nn^1,\ldots,H_\nn^\tau$ are also linearly independent. Applying Lemma \ref{Lemma:Generates} to the map 
 \[
  W_{\nn} \overset{z_1 \circ}{\longrightarrow} W_{\nn-\ee_1}
 \]
we conclude that $W_\nn=\langle H_\nn^1,\ldots,H_\nn^\tau\rangle.$ Thus we have constructed $H_\nn^j,$ for all $j=1,\ldots,\tau$ and $\nn \in \N^d_+$ satisfying the required conditions. 

Moreover, $I^\perp=\langle W_{\nn} :\nn \in \N^d_+ \rangle=:W.$ Indeed, if $F \in I^\perp,$ then $(I_{{\bf t}_d}) \circ F=0$ where $t>\deg F.$ Hence $F \in W_{{\bf t}_d} \subseteq W.$ The other containment is trivial.
\qedhere
\end{proof}

Motivated by Proposition \ref{Prop:ConstructionOfAdmissible} we define $L_d^\tau$-admissible $R$-submodules of $\D$ as follows:

\begin{definition}
\label{Def:LdAdmissible}
 Let $d$ and $\tau$ be positive integers. An $R$-submodule $W$ of $\D$ is called {\it local} (resp. {\it graded}) {\it $L_d^\tau$-admissible} if it admits a system of (resp. homogeneous) generators $\{H_\nn^j:{\nn \in \mathbb{N}_+^d, j =1,\ldots, \tau}\}$ satisfying the following conditions:
 \begin{enumerate}[(1)]
 \item \label{Def:Cond1} for all $\nn \in \N_+^d,$ $s_{\nn}:=\deg H_\nn^1=\deg H_\nn^2=\cdots=\deg H_\nn^\tau$ and the forms of degree $s_{\nn}$ of $H_\nn^1,\ldots,H_\nn^\tau$ are linearly independent;
  \item \label{Def:Cond2} there exists a sequence of general elements (resp. regular linear sequence) $z_1,\ldots,z_d $ in $\mathcal{M}$ such that 
  \begin{equation}
  \label{Eqn:Cond2}
   z_i \circ H_\nn^j =\begin{cases}
                     H_{\nn-\ee_i}^j & \mbox{ if } \nn-\ee_i >\mathbf{0}_d\\
                     0 & \mbox{ otherwise}
                    \end{cases}
  \end{equation}
  for all $\nn \in {\mathbb{N}_+^d},$ $j=1,\ldots,\tau$ and $i=1,\ldots,d;$
\item \label{Def:Cond3} for all $i=1,\ldots,d$ and ${\nn \in \mathbb{N}_+^d} $ such that $\nn-\ee_i > \mathbf{0}_d$
\begin{equation}
\label{Eqn:Cond3}
W_{\nn} \cap V_\nn^i \subseteq W_{\nn-\ee_i} 
\end{equation}
where $W_\nn=\langle H_\nn^j:j=1,\ldots,\tau\rangle$ and $V_\nn^i:= \langle Z_1^{k_1}\cdots Z_m^{k_m} : \kk=(k_1,\ldots,k_m) \in \N^m \mbox{ with } k_i\leq n_i-2 \mbox{ and }|\kk| \leq s_{\nn} \rangle.$ 
 \end{enumerate}
If this is the case, we also say that $W$ is $L_d^\tau$-admissible with respect to the sequence $\uz:=z_1,\ldots,z_d.$
\end{definition} 

\begin{remark}
\label{Remark:Definition}
\begin{asparaenum}
\item We remark that if an $R$-submodule $W$ of $\D$ satisfies \eqref{Eqn:Cond2}, then 
\[
 W_{\nn-\ee_i} \subseteq W_{\nn} \cap V_\nn^i 
\]
for all $i=1,\ldots,d$ and $\nn-\ee_i > \mathbf{0}_d.$ Indeed, by \eqref{Eqn:Cond2} $W_{\nn-\ee_i} \subseteq W_{\nn}$ and 
\[
z_i^{n_i-1} \circ H_{\nn-\ee_i}^j=0
\] for all $j=1,\ldots,\tau$ which implies that 
$W_{\nn-\ee_i} \subseteq V_{\nn}^i.$ Thus $W_{\nn-\ee_i} \subseteq W_{\nn} \cap V_\nn^i.$
Hence if $W$ is $L_d^\tau$-admissible, then equality holds in \eqref{Eqn:Cond3}.

\item 
\label{Remark:vanishing}
Let $\{H_\nn^j:{1 \le j \le \tau, \nn \in \N_+^d}\}$ be a local or graded $L_d^\tau$-admissible $R$-submodule of $\D.$ Put $W_\nn:=\langle H_\nn^j:j=1,\ldots,\tau\rangle.$ Then
$W_{{\bf 1}_d}=0$ if and only if $W_\nn=0$ for all $\nn\in \N^d_+.$ \\
{\bf Proof:} Suppose $W_{{\bf 1}_d}=0.$ We use induction on $t=|\nn|$ to show that $W_\nn=0$ for all $\nn\in \N^d_+.$ If $t=d,$ then $\nn={\bf 1}_d$ and by assumption $W_{{\bf 1}_d}=0.$ Assume that $t >d $ and $W_{\nn}=0$ for all $\nn$ with $|\nn| \leq t.$ It suffices to show that $W_{\nn+\ee_i}=0$ for all $i=1,\ldots,d.$ From \eqref{Eqn:Cond3} it follows that $W_{\nn+\ee_i} \cap V_{\nn+\ee_i}^i \subseteq W_\nn=0$. If $W_{\nn+\ee_i} \neq 0,$ then $0\neq 1 \in W_{\nn+\ee_i} \cap V_{\nn+\ee_i}^i$ which is a contradiction. Hence $W_{\nn+\ee_i}=0$ for all $i=1,\ldots,d.$ 
The converse is trivial.  
\end{asparaenum}
\end{remark}

In the following proposition we prove that if $W$ is an $L_d^\tau$-admissible $R$-submodule of $\D,$ then $R/\ann_R(W)$ is a level $K$-algebra. We do not need assumptions on the characteristic for this.

\begin{proposition}
\label{Prop:AdmissibleGivesLevel}
Let $W=\langle H_\nn^j:j=1,\dots,\tau, \nn \in \N_+^d \rangle$ be a non-zero local or graded $L_d^\tau$-admissible $R$-submodule of $\D.$ Then $R/\ann_R(W)$ is a level $K$-algebra of dimension $d>0$ and type $\tau.$
\end{proposition}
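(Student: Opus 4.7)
The plan is to set $I := \ann_R(W)$, $A := R/I$, and for each $\nn \in \N_+^d$ introduce $W_\nn := \langle H_\nn^j : j = 1, \ldots, \tau\rangle$ and $I_\nn := \ann_R(W_\nn)$. Since $W_\nn \subseteq \D_{\le s_\nn}$ is finite-dimensional over $K$, condition (1) and Proposition~\ref{Prop:Emsalem-Iarrobino} give that $R/I_\nn$ is Artinian level of type $\tau$ and socle degree $s_\nn$, with $W_\nn = I_\nn^\perp$. It therefore suffices to show that $\uz$ is an $A$-regular sequence and that $I_\nn = I + (\uz^\nn)$ for every $\nn$: the former makes $A$ Cohen--Macaulay of dimension $d$, while the latter, with $\nn = \mathbf{1}_d$, identifies $A/(\uz)A$ with $R/I_{\mathbf{1}_d}$, an Artinian level $K$-algebra of type $\tau$, whence Definition~\ref{Def:LocalLevel} (using Fact~\ref{Fact:GeneralElements}) in the local case, and Proposition~\ref{Prop:GLQutotientByHomEle} applied iteratively along $\uz$ in the graded case, conclude that $A$ is level of type $\tau$.

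Conditions (2) and (3) produce, for every $i$ with $n_i \ge 2$, the short exact sequence
\[
0 \longrightarrow W_{\nn-\ee_i} \longrightarrow W_\nn \xrightarrow{\,z_i^{n_i-1}\circ\,} W_{\nn-(n_i-1)\ee_i} \longrightarrow 0:
\]
surjectivity follows from iterating condition (2), and the kernel is $W_\nn \cap V_\nn^i$, which by Remark~\ref{Remark:Definition} together with condition (3) equals $W_{\nn-\ee_i}$. Coordinate-wise iteration yields $\dim_K W_\nn = n_1 \cdots n_d \cdot \dim_K W_{\mathbf{1}_d}$. Moreover, $H_\nn^j = z_i \circ H_{\nn+\ee_i}^j$ by condition (2), so $z_i \circ W = W$; dually, if $z_i f \in I$ then $f \circ W = f \circ (z_i \circ W) = 0$, i.e.\ $f \in I$, so each $z_i$ is $A$-regular.

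The identification $I_\nn = I + (\uz^\nn)$ is the crux. The inclusion $\supseteq$ is immediate from $z_i^{n_i}\circ W_\nn = 0$. For $\subseteq$, I would first show $W_\nn = W \cap (\uz^\nn)^\perp$ by descending induction: if $F \in W_{\mathbf{T}_d} \cap (\uz^\nn)^\perp$ with some $T_i > n_i$, then $z_i^{T_i-1}\circ F = 0$ (applying $z_i^{T_i-1-n_i}$ to $z_i^{n_i}\circ F = 0$), so $F \in V_{\mathbf{T}_d}^i$ and condition (3) forces $F \in W_{\mathbf{T}_d-\ee_i}$. Writing $W_\nn = W_{\mathbf{T}_d} \cap ((\uz^\nn)+\M^{s_{\mathbf{T}_d}+1})^\perp$ for $\mathbf{T}_d \ge \nn$ (both factors are finitely generated, since $(\uz^\nn)+\M^{s_{\mathbf{T}_d}+1}$ is $\M$-primary), Proposition~\ref{Prop:BasicPropertiesOfAnn} gives $I_\nn = I_{\mathbf{T}_d} + (\uz^\nn) + \M^{s_{\mathbf{T}_d}+1} = I_{\mathbf{T}_d} + (\uz^\nn)$, the last step using that $\M^{s_{\mathbf{T}_d}+1} \subseteq I_{\mathbf{T}_d}$. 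Passing to the limit along $\mathbf{T}_d = k\mathbf{1}_d$ with $k \to \infty$ and using $\bigcap_k I_{k\mathbf{1}_d} = \ann_R(\bigcup_k W_{k\mathbf{1}_d}) = I$ via a Krull-intersection/Artin--Rees argument (where the $A$-regularity of the $z_i$ supplies the needed control modulo $(\uz^\nn)$) then yields $I_\nn = I + (\uz^\nn)$.

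With the key identity in hand, $\length A/(\uz^\nn)A = n_1 \cdots n_d \cdot \length A/(\uz)A$. Hilbert--Samuel growth forces $\dim A = d$, making $\uz$ a system of parameters, and Lech's formula gives $e((\uz);A) = \length A/(\uz)A$; the standard multiplicity criterion then makes $A$ Cohen--Macaulay with $\uz$ an $A$-regular sequence, completing the argument. The principal obstacle is the Krull-intersection step in the proof of $I_\nn = I + (\uz^\nn)$: neither $W$ nor $(\uz^\nn)^\perp$ is finitely generated in positive Krull dimension, so care is needed to pass rigorously from the approximation $I_\nn = I_{\mathbf{T}_d} + (\uz^\nn)$ for large $\mathbf{T}_d$ to the exact equality with $I$ in place of $I_{\mathbf{T}_d}$.
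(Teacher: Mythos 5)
Your overall architecture matches the paper's: both reduce to the key identity $I_\nn := \ann_R(W_\nn) = I + (\uz^\nn)$, then establish regularity/Cohen--Macaulayness and invoke Proposition~\ref{Prop:Emsalem-Iarrobino} at $\nn = \mathbf{1}_d$. But you reach the identity differently, and there is a genuine gap in your limit step, which you yourself flag.

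What you do differently, and correctly: the short exact sequence $0 \to W_{\nn-\ee_i} \to W_\nn \xrightarrow{z_i^{n_i-1}\circ} W_{\nn-(n_i-1)\ee_i} \to 0$ and the consequent $\dim_K W_\nn = n_1\cdots n_d\,\dim_K W_{\mathbf{1}_d}$ are right (the kernel identification $W_\nn \cap V_\nn^i = W_{\nn-\ee_i}$ uses condition~(3) plus the remark, and surjectivity follows by iterating condition~(2)). Your descending-induction proof that $W_\nn = W \cap (\uz^\nn)^\perp$ is also correct and is a cleaner packaging of condition~(3) than anything that appears explicitly in the paper's proof. Your verification that each $z_i$ is $A$-regular via surjectivity of $z_i\circ$ on $W$ is correct, and the closing argument via Lech's formula and the multiplicity criterion is a legitimate alternative to the paper's induction in Claim~2 (the paper instead proves directly that $z_1,\ldots,z_d$ is an $A$-regular sequence, using the Krull intersection theorem, without ever invoking multiplicities).

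The gap is exactly where you say it is. From $I_\nn = I_{\mathbf{T}_d}+(\uz^\nn)$ for every $\mathbf{T}_d \ge \nn$ you want $I_\nn = I + (\uz^\nn)$, i.e.\ that $\bigcap_k\bigl(I_{k\mathbf{1}_d}+(\uz^\nn)\bigr) = \bigl(\bigcap_k I_{k\mathbf{1}_d}\bigr)+(\uz^\nn)$. That is not a general fact, and the appeal to ``Krull intersection/Artin--Rees'' does not supply it: neither the individual regularity of the $z_i$ nor the finite-stage equalities control the decompositions $f = h_k + g_k$, $h_k \in I_{k\mathbf{1}_d}$, $g_k \in (\uz^\nn)$, in a way that makes $h_k$ converge. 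Contrast this with the paper's Claim~1, which is engineered so that the correction term $g_t$ at stage $t$ lies in $(\uz^{\nn+\mathbf{t}_d})$, hence in $\M^{\min_i n_i + t}$; this is what makes $\sum_t g_t$ an $\M$-adically convergent series and allows the limits $f'$, $g'$ to exist in the complete ring with $f' \in \bigcap_\kk I_\kk = I$ and $g' \in (\uz^\nn)$ (the latter using that ideals are $\M$-adically closed). Your version, with a fixed $(\uz^\nn)$ at every stage, discards precisely the shrinking-error feature that drives the paper's convergence argument.

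That said, the gap is repairable more cheaply than you suggest, and more cheaply than the paper's route, using the very identity you already proved: since $W = I^\perp$, one has $W \cap (\uz^\nn)^\perp = I^\perp \cap (\uz^\nn)^\perp = (I + (\uz^\nn))^\perp$ tautologically, so your $W_\nn = W \cap (\uz^\nn)^\perp$ reads $W_\nn = (I+(\uz^\nn))^\perp$. Now $\ann_R(J^\perp) = J$ holds for \emph{every} ideal $J$ of $R$ (not only for $\M$-primary ones), because $\D$ is a (graded, resp.\ complete local) injective cogenerator, so $R/J$ embeds in $(R/J)^{\vee\vee}$ and annihilators are preserved under Matlis duality. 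Hence $I_\nn = \ann_R(W_\nn) = I + (\uz^\nn)$ with no limit needed. So the obstacle you identify --- that $W$ and $(\uz^\nn)^\perp$ are not finitely generated, so Proposition~\ref{Prop:BasicPropertiesOfAnn} does not apply directly --- is a red herring: you do not need that proposition here, only the elementary duality $\ann_R(J^\perp)=J$.
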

\begin{proof}
We prove the result for the local case. The proof is similar for the graded case. We give details for the graded case, if necessary.

Let $W$ be $L_d^\tau$-admissible with respect to a sequence of general elements $\uz:=z_1,\ldots,z_d.$ 
We set $W_\nn:=\langle H_\nn^j:j=1,\ldots,\tau\rangle ,$ $I_\nn:=\ann_R(W_\nn) $ and $I:=\ann_R(W).$ Hence   $I=\cap_{\nn \in \N^d_+} I_\nn.$ \\
{\bf Claim 1:} $I_\nn=I+({\uz^\nn})$ for all $\nn \in \N^d_+.$\\
{\bf Proof of Claim 1:} 
First we prove that for all $\nn\in \N^d_+,$
\begin{equation}
\label{Eqn:containment}
I_\nn \subseteq I_{\nn+{\bf 1}_d}+(\uz^\nn).
\end{equation}
Note that for all $i=1,\ldots,d$ and $\nn \in \N_d^+,$ 
\[
\ann_R(V_{\nn}^i)=(z_i^{n_i-1})+\mathcal{M}^{s_{\nn}+1}.
\]
Indeed, suppose $f \in \ann_R(V_{\nn}^i).$ Let 
$  f=\sum_{\kk \in \N^m} \alpha_{\kk} x^\kk. $ Since $x^{\kk} \in \mathcal{M}^{s_{\nn}+1}$ if $|\kk| > s_{\nn},$ we may assume that $f$ is a polynomial of degree at most $s_{\nn}.$ Write
\[
 f=z_i^{n_i-1} f_1+f_2
\]
for some $f_1,f_2\in R$ such that $\deg _{~z_i} f_2 \leq n_i-2 .$ As $\deg f \leq s_{\nn},$ $\deg f_2 \leq s_{\nn}.$ 
Assume that $f_2 \neq 0.$ Then there exists $G \in V_{\nn}^i$ such that $f_2 \circ G \neq 0.$ But as $f,z_i^{n_i-1} \in \ann_R(V_{\nn}^i),$ $f_2 \in \ann_R(V_{\nn}^i)$ and hence $f_2 \circ G=0,$ which leads to a contradiction. Therefore $f_2=0$ and hence $f \in (z_i^{n_i-1}).$ Thus 
$\ann_R(V_{\nn}^i)\subseteq(z_i^{n_i-1})+\mathcal{M}^{s_{\nn}+1}.$ The other containment is clear.

Now, since $W_{\nn+\ee_i} \cap V_{\nn+\ee_i}^i \subseteq W_{\nn},$ by using Proposition \ref{Prop:BasicPropertiesOfAnn} we get 
\begin{eqnarray*}
I_{\nn}= \ann_R(W_{\nn}) \subseteq  \ann_R(W_{\nn+\ee_i} \cap V_{\nn+\ee_i}^i)=\ann_R(W_{\nn+\ee_i})+\ann_R(V_{\nn+\ee_i}^i)=I_{\nn+\ee_i}+(z_i^{n_{i}}).
\end{eqnarray*}
The last equality follows because $\mathcal{M}^{s_{\nn+\ee_i}+1} \subseteq I_{\nn+\ee_i}.$ 
Therefore for all $\nn\in \N^d_+$ 
\begin{eqnarray*}
 \nonumber I_{\nn} \subseteq I_{\nn+\ee_1}+(z_1^{n_1})\subseteq I_{\nn+\ee_1+\ee_2}+(z_1^{n_1},z_2^{n_2}) \subseteq \cdots\\
 \cdots \subseteq I_{\nn+\ee_1\cdots+\ee_d}+(z_1^{n_1},\ldots,z_d^{n_d})=I_{\nn+{\bf 1}_d}+ (\uz^{\nn}).
 \end{eqnarray*}
Fix $\nn \in \N^d_+$ and consider $f \in I_\nn.$ By \eqref{Eqn:containment} there are $f_{\nn+{\bf 1}_d} \in I_{\nn+{\bf 1}_d}$ and $g_0 \in (\uz^\nn)$ such that 
\[
 f=f_{\nn+{\bf 1}_d}+g_0.
\]
Since $f_{\nn+{\bf 1}_d} \in I_{\nn+{\bf 1}_d},$ again by \eqref{Eqn:containment} there are $f_{\nn+{\bf 2}_d} \in I_{\nn+{\bf 2}_d}$ and $g_1 \in (\uz^{\nn+{\bf 1}_d})$ such that
\[
 f_{\nn+{\bf 1}_d}=f_{\nn+{\bf 2}_d}+g_1.
\]
Thus $f=f_{\nn+{\bf 2}_d}+g_0+g_1.$ By recurrence there are sequences $\{f_{\nn+{\bf t}_d}\}_{t \geq 0}$ and $\{g_t\}_{t \geq 0}$ such that $f_{\nn+{\bf t}_d} \in I_{\nn+{\bf t}_d},$ $g_t \in (\uz^{\nn+{\bf t}_d})$ and $f_{\nn+({\bf t-1})_d}=f_{\nn+{\bf t}_d}+g_{t-1}.$ For all $t \geq 0, $ it holds
\begin{equation}
\label{Eqn:FormulaForf}
 f=f_{\nn+{\bf t}_d}+\sum_{i=0}^{t-1}g_i.
\end{equation}
Let ${g'}=\sum_{i \geq 0}g_i \in K[\![x_1,\ldots,x_m]\!].$ Let ${f'}=\lim_{t \to \infty} f_{\nn+{\bf t}_d} \in K[\![x_1,\ldots,x_m]\!].$ Taking limit as $t \to \infty$ in \eqref{Eqn:FormulaForf}, we get  
\begin{eqnarray*}
 f={f'}+{g'}.
\end{eqnarray*}
Since $g_t \in (\uz^{\nn+{\bf t}_d})$ for all $t\geq 0,$ ${g'} \in (\uz^\nn).$ 
Since for every ${\bf k} \in \N^d_+,$ $W_{\kk}$ is finitely generated $R$-submodule of $\D$, $R/I_{\kk}$ is Artinian. Hence there exists a positive integer $N(\kk)$ such that $\M^{N(\kk)} \subseteq I_{{\kk}}.$ 
Note that as $W_{\kk} \subseteq W_{\kk+{\bf t}_d}$ by \eqref{Eqn:Cond2}, $I_{{\bf k}+{\bf t}_d} \subseteq I_{{\bf k}}$ for all $t \geq 0.$  
Since $f_{{\bf k}+{\bf t}_d} - {f'} \in \M^{N(\kk)} \subseteq I_{{\bf k}}$ for all $t \gg 0$ and $f_{{\bf k}+{\bf t}_d}\in I_{{\bf k}+{\bf t}_d} \subseteq I_{{\bf k}}$ for all $t \geq 0,$ we get that ${f'} \in I_{{\bf k}}$ for all ${\bf k} \in \N^d_+.$ Thus ${f'} \in I$ and hence $f \in I+(\uz^\nn).$ This gives that $I_\nn \subseteq I+(\uz^\nn).$

If $R=K[x_1,\ldots,x_m], $ then ${f'} \in I \subseteq R.$ Since $f \in R$ we get that 
${g'}=\sum_{i \geq 0}g_i \in R.$

On the other hand, by \eqref{Eqn:Cond2} $z_i^{n_i} \circ H_\nn^j =0$ for all $j=1,\ldots,\tau$ and $i=1,\ldots,d.$ Hence $(\uz^\nn) \subseteq I_\nn.$ Clearly, $I \subseteq I_\nn.$ Therefore $I+(\uz^\nn) \subseteq I_\nn.$ This proves Claim 1.
\vskip 2mm
\noindent {\bf Claim 2:} $\uz$ is a regular sequence modulo $I$ and $\dim(R/I)=d.$\\
{\bf Proof of Claim 2:} By Claim 1 $\ann_R(W_{{\bf 1}_d})=I_{{\bf 1}_d}= I+(\uz).$ Since $W \neq 0,$ by Remark \ref{Remark:Definition}\eqref{Remark:vanishing} $W_{{\bf 1}_d} \neq 0.$ Since $W_{{\bf 1}_d}$ is a non-zero finitely generated $R$-submodule of $\D,$ $R/(I+(\uz))$ is Artinian. This implies that $\dim (R/I) \leq d.$ Next we prove that $\uz$ is a regular sequence modulo $I$ and hence $\dim(R/I)=d.$

First we prove that $z_1$ is a nonzero-divisor on $A=R/I.$ By \eqref{Eqn:Cond2} the derivation by $z_1$ defines an epimorphism of $R$-modules
\[
W_{\nn + \ee_1} \overset{z_1\circ}{\longrightarrow} W_{\nn} \longrightarrow 0 
\]
for all $\nn  \in \N^d_+ .$ Since $\ann_R(W_{\nn})= I_{\nn}= I+(\uz^\nn)$ by Claim 1, this sequence induces an exact sequence of $R$-modules
\[
0\longrightarrow \frac{R}{I+(\uz^{\nn})} \overset{\bigcdot z_1}{\longrightarrow} \frac{R}{I+(\uz^{\nn+\ee_1})}
\]
for all $\nn \in \N^d_+$.
Let $f \in R$ be such that $z_1f \in I.$ 
Since $z_1 f \in I+(\uz^{\nn+\ee_1}),$ we deduce that $f \in I+(\uz^{\nn})=I_{\nn}$ for all $\nn \in \N^d_+$ and hence we conclude that $f \in I.$\\
Assume that $z_1,\ldots,z_l$ is a regular sequence of $R/I$, with $l < d$. Given $\nn'=(n_{l+1},\ldots,n_d) \in \N^{d-l}_+,$ 
we take $\nn=(1,\ldots,1,n_{l+1},\ldots,n_d) \in \N^d_+.$ By \eqref{Eqn:Cond2} the derivation by $z_{l+1}$ defines an epimorphism of $R$-modules for all $n_{l+1} \geq 2$ 
\[
W_\nn \overset{z_{l+1}\circ}{\longrightarrow} W_{\nn-\ee_{l+1}} \longrightarrow 0.
\]
Since $\ann_R(W_{\nn})= I_{\nn}= I+(\uz^\nn) $ by Claim 1, this sequence induces an exact sequence of $R$-modules
\[
0 \longrightarrow \frac{R}{I+(z_1,\ldots,z_l)+(z_{l+1}^{n_{l+1}-1},\ldots,z_d^{n_d})} \overset{\bigcdot z_{l+1}}{\longrightarrow} \frac{R}{I+(z_1,\ldots,z_l)+(z_{l+1}^{n_{l+1}},\ldots,z_d^{n_d})}.
\]
Let $f \in R$ be such that $z_{l+1} f \in I+(z_1,\ldots,z_l).$ Since $z_{l+1} f \in I+(z_1,\ldots,z_l)+(z_{l+1}^{n_{l+1}},\ldots,z_d^{n_d}),$ we deduce that $f \in I+(z_1,\ldots,z_l)+(z_{l+1}^{n_{l+1}-1},\ldots,z_d^{n_d})$ for all $n_{l+1} \geq 2.$ 
Hence $\bar f\in R/(I+(z_1,\dots,z_l))$ is such that $\bar f\in  {L_t}:=\overline{(z_{l+1}^{t},\ldots,z_d^{t})}$ for all $t  \geq 1.$ Here $\overline{(\cdot)}$ denotes the image in $R/(I+(z_1,\dots,z_l)).$ Since by Krull intersection theorem $\cap_{t \geq 1} L_t=0,$ $\bar f=0$ and hence $f \in I+(z_1,\ldots,z_l).$
Thus $\uz$ is a regular sequence and we have the claim.
\vskip 2mm 
\noindent {\bf Claim 3:} $R/I$ is a level $K$-algebra of type $\tau$.\\
{\bf Proof of Claim 3:} 
Since $\uz$ is a sequence of general elements in $\mathcal{M},$ $\uz+I:=z_1+I,\ldots,z_d+I$ is a sequence of general elements in $\m.$ Therefore by Fact \ref{Fact:GeneralElements}, $({\uz})A \subseteq \m$ is a general minimal reduction of $\m.$
Since $(I+(\uz))^\perp=(I_{{\bf 1}_d})^\perp=W_{{\bf 1}_d}$ and $W_{{\bf 1}_d}$ is generated by polynomials $H_{{\bf 1}_d}^1,\ldots,H_{{\bf 1}_d}^\tau$ of same degree whose forms of degree $ s_{\bf 1}$ are linearly independent, by Proposition \ref{Prop:Emsalem-Iarrobino} $R/(I+(\uz))$ is an Artinian level $K$-algebra of type $\tau.$ Since $R/I$ is Cohen-Macaulay by Claim 2, $R/I$ is a $d$-dimensional level $K$-algebra of type $\tau$ according to Definition \ref{Def:LocalLevel}.
\end{proof}

Summarizing, we can now give a complete characterization of the inverse system of level $K$-algebras.
 
\begin{theorem}
\label{thm:CharOfLocalLevel}
Let $R=K[\![x_1,\ldots,x_m]\!]$ with $\mathrm{char}(K)=0$ (resp. $R=K[x_1,\ldots,x_m]$) and let $0< d \leq m$. There is a one-to-one correspondence between the following sets:
 \begin{enumerate}[(1)]
  \item $d$-dimensional local (resp. graded) level $K$-algebras of type $\tau;$
  \item non-zero local (resp. graded) $L_d^\tau$-admissible $R$-submodules $W=\langle H_\nn^j: j=1,\dots,\tau,\nn \in \N^d_+\rangle$ of $\D;$
 \end{enumerate}
given by
 \begin{eqnarray*}
\left\{  \begin{array}{cc} I \subseteq R \mbox{ such that } R/I \mbox{ is a local } \\
\mbox{  (resp.  graded) level $K$-algebra of }\\
\mbox{ dimension $d$ and type }\tau  
  \end{array} \right\}  \ &\stackrel{1 - 1}{\longleftrightarrow}& \ 
\left\{ \begin{array}{cc} \mbox{ non-zero local (resp. graded)} L_d^\tau\mbox{-admissible }\\
R\mbox{-submodule } W \mbox{ of } \D   \\
\end{array} \right\} \\
R/I \ \ \ \ \ &\longrightarrow& \ \ \ \ \  I^\perp \\
R/\ann_R(W) \ \ \ \ \  &\longleftarrow& \ \ \ \ \  W 
\end{eqnarray*}
\end{theorem}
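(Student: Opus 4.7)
The plan is to combine Propositions~\ref{Prop:ConstructionOfAdmissible} and~\ref{Prop:AdmissibleGivesLevel} to obtain both directions of the bijection, and then verify that the two assignments $I \mapsto I^\perp$ and $W \mapsto \ann_R(W)$ are mutually inverse. Throughout, let $\uz := z_1,\dots,z_d$ be a sequence of general elements of $\mathcal{M}$ in the local case (or a regular linear sequence for $R/I$ in the graded case), and set $I_\nn := I + (\uz^\nn)$ and $W_\nn := I_\nn^\perp$ for $\nn \in \N^d_+$.

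For the forward implication, Proposition~\ref{Prop:ConstructionOfAdmissible} produces (homogeneous in the graded case) generators $\{H_\nn^j : j = 1,\dots,\tau,\ \nn \in \N_+^d\}$ of $I^\perp$ satisfying conditions~(1) and~(2) of Definition~\ref{Def:LdAdmissible}, with $W_\nn = \langle H_\nn^j : j=1,\dots,\tau\rangle$. Condition~(3) must still be verified: given $F \in W_\nn \cap V_\nn^i$ with $\nn - \ee_i > \mathbf{0}_d$, membership in $W_\nn = I_\nn^\perp$ gives $I \circ F = 0$ and $z_j^{n_j} \circ F = 0$ for every $j$, while membership in $V_\nn^i$ forces $F$ to have degree at most $n_i - 2$ in $Z_i$, so the contraction action~\eqref{Eqn:DividedPower} immediately yields $z_i^{n_i-1} \circ F = 0$. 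Hence $I + (\uz^{\nn-\ee_i}) \subseteq \ann_R(F)$, i.e., $F \in W_{\nn-\ee_i}$. Therefore $I^\perp$ is $L_d^\tau$-admissible.

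Conversely, Proposition~\ref{Prop:AdmissibleGivesLevel} directly gives that for any non-zero $L_d^\tau$-admissible submodule $W \subseteq \D$, the ring $R/\ann_R(W)$ is a $d$-dimensional level $K$-algebra of type $\tau$. To see that the assignments invert each other, I first show $\ann_R(I^\perp) = I$: the inclusion $I \subseteq \ann_R(I^\perp)$ is immediate, and conversely $I_\nn^\perp \subseteq I^\perp$ gives $\ann_R(I^\perp) \subseteq \ann_R(I_\nn^\perp) = I_\nn$ by Matlis duality applied to the Artinian ring $R/I_\nn$; intersecting over $\nn$ and using Krull's intersection theorem in the complete local ring $R/I$ (or a degree argument in the graded case) yields $\ann_R(I^\perp) \subseteq \bigcap_\nn I_\nn = I$. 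For the opposite composition, let $F \in (\ann_R W)^\perp$ have finite total degree $t$, and choose $\nn$ with $n_i > t$ for all $i$, so that $z_i^{n_i} \circ F = 0$; combined with $\ann_R(W) \circ F = 0$, this gives $F \in (\ann_R(W) + (\uz^\nn))^\perp$. By Claim~1 in the proof of Proposition~\ref{Prop:AdmissibleGivesLevel}, $\ann_R(W) + (\uz^\nn) = \ann_R(W_\nn)$, and Matlis duality for the finitely generated submodule $W_\nn$ then gives $(\ann_R W_\nn)^\perp = W_\nn$. Hence $F \in W_\nn \subseteq W$.

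The main subtlety lies in the mutual inversion step: Matlis duality is only directly available at the Artinian reductions $R/I_\nn$, so both equalities $\ann_R(I^\perp) = I$ and $(\ann_R W)^\perp = W$ require passing to these reductions, invoking the full strength of Claim~1 from the proof of Proposition~\ref{Prop:AdmissibleGivesLevel}, and then patching via Krull intersection or a finite-degree argument. The verification of condition~(3) is comparatively routine but must be explicitly recorded since Proposition~\ref{Prop:ConstructionOfAdmissible} only asserts~(1) and~(2).
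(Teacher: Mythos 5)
Your proof is correct and follows essentially the same route the paper takes: condition~(3) of $L_d^\tau$-admissibility is checked exactly as in the paper (showing $(I+(\uz^{\nn-\ee_i}))\circ F = 0$ for $F\in W_\nn\cap V_\nn^i$), the converse is handed to Proposition~\ref{Prop:AdmissibleGivesLevel}, and the two compositions are identified using Claim~1 from that proposition's proof plus Matlis duality at the Artinian reductions $R/I_\nn$ together with Krull intersection. The only stylistic difference is in the verification that $(\ann_R W)^\perp = W$: you reproduce inline the finite-degree ``choose $\nn$ with $n_i > \deg F$'' argument, whereas the paper outsources this to the final paragraph of the proof of Proposition~\ref{Prop:ConstructionOfAdmissible}; the content is identical, and your inline version is arguably slightly more self-contained since the cited proposition is stated for generating sets produced by its own construction rather than for arbitrary $L_d^\tau$-admissible ones.
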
 
\begin{proof}
Let $A=R/I$ be a local level $K$-algebra of dimension $d$ and type $\tau.$ Let $\uz=z_1,\ldots,z_d $ be a sequence of general elements in $\mathcal{M}.$ 
Let $\{H^j_\nn:\nn \in \N^d_+,~ j =1,\ldots, \tau\}$ be a generating set of $I^\perp$ 
as in Proposition \ref{Prop:ConstructionOfAdmissible}. To prove that $I^\perp$ is $L_d^\tau$-admissible 
it is enough to prove \eqref{Eqn:Cond3}. 
For fixed $i=1,\ldots,d$ and $\nn \in N^d_+,$ let $ F \in W_\nn \cap V_\nn^i$ where $W_{\nn}:=(I+(\uz^{\nn}))^\perp$ and $V_\nn^i:= \langle Z_1^{k_1} \cdots Z_m^{k_m} : \kk =(k_1,\ldots,k_m)\in \N^m \mbox{ with } k_i \leq n_i-2 \mbox{ and }|\kk| \leq s+|\nn|-d \rangle .$ We have 
\begin{eqnarray*}
  (I+(\uz^{\nn-\ee_i})) \circ F& =& I \circ F + (\uz^{\nn-\ee_i}) \circ F\\
  &=& z_i^{n_i-1} \circ F \hspace*{4cm} (\mbox{as } F \in W_\nn)\\
&=& 0 \hspace*{5.1cm} (\mbox{as } F \in V_\nn^i).
  \end{eqnarray*}
 Hence $F \in (I+(\uz^{\nn-\ee_i}))^\perp= W_{\nn-\ee_i}$. This proves \eqref{Eqn:Cond3}. 
Hence   
$I^\perp$ is a local $L_d^\tau$-admissible submodule of $\D$. As $d \geq 1,$ $W_{{\bf 1}_d}\neq 0$ which gives that $I^\perp \neq 0.$ 
\vskip 3mm
Let $W=\langle H_\nn^j:1 \le j \le \tau, \nn \in \N_+^d \rangle$ be a non-zero local $L_d^\tau$-admissible $R$-submodule of $\D.$ By Proposition \ref{Prop:AdmissibleGivesLevel}, $R/\ann_R(W)$ is a level $K$-algebra of dimension $d >0$ and type $\tau.$
\vskip 3mm
Let 
\begin{eqnarray*}\mathcal{C}&:=&\left\{I \subseteq R \mbox{ such that } R/I \mbox{ is a local (or graded) level $K$-algebra of }
\mbox{dimension $d$ and type }\tau  
 \right\}  \mbox{ and }\\ 
\mathcal{C}'&:=&\left\{ \mbox{ local (or graded) } L_d^\tau\mbox{-admissible $R$-submodule }
W= \langle H_\nn^j :j=1,\dots,\tau, \nn \in \N^d_+ \rangle \mbox{ of } \D 
 \right\}. 
\end{eqnarray*}
Consider $\theta: \mathcal{C} \to \mathcal{C}'$ defined as 
\[
\theta(R/I)=I^\perp
\]
and $\theta':\mathcal{C}' \to \mathcal{C}$ defined as
\[
\theta'(W)=R/\ann_R(W).  
\]
We prove that $\theta$ and $\theta'$ are inverses of each other. Let $A=R/I$ be a $d$-dimensional level $K$-algebra. 
Then 
\[
 \theta'\theta(R/I)=R/\ann_R(I^\perp).
\]
Let $\uz:=z_1,\ldots,z_d$ be a sequence of general elements in $\mathcal{M}.$ 
Let $\{H_{\nn}^j:\nn \in \N^d_+, ~j =1,\ldots,\tau\}$ be a system of generators of $I^\perp$ as in Proposition \ref{Prop:ConstructionOfAdmissible}. 
Then $W_{\nn}:=(I+(\uz^\nn))^\perp=\langle H_{\nn}^j:j=1,\ldots,\tau\rangle.$ 
Therefore 
\[
 \ann_R(I^\perp)=\bigcap_{\nn \in \N^d_+} \ann_R(W_\nn)= \bigcap_{\nn \in \N^d_+} \ann_R((I+(\uz^\nn))^\perp)=\bigcap_{\nn \in \N^d_+} (I+(\uz^\nn))=I
\]
where $\ann_R((I+(\uz^\nn))^\perp)=I+(\uz^\nn)$ by Matlis duality. The last equality follows by Krull intersection theorem applied to $R/I.$ Hence $\theta'\theta$ is the identity. 

Let $W= \langle H_\nn^j :j=1,\dots,\tau, \nn \in \N^d_+\rangle$ be a local $L_d^\tau$-admissible $R$-submodule of $\D$ with respect to the sequence $\uz:=z_1,\ldots,z_d.$ 
Then
\[
 \theta \theta'(W)=\ann_R(W)^\perp.
\] 
Let $I:=\ann_R(W),$ $I_{\nn}:=\ann_R \langle H_\nn^j :j=1,\dots,\tau \rangle.$ 
Then by Claim 1 in the proof of Proposition \ref{Prop:AdmissibleGivesLevel} we get $
I_{\nn}=I+(\uz^\nn).$ Hence by Proposition \ref{Prop:ConstructionOfAdmissible} it follows that
\[
I^\perp=\langle (I+(\uz^\nn))^\perp:\nn \in \N^d_+\rangle \langle =I_{\nn}^\perp : \nn \in \N^d_+\rangle.
\] 
Since $I_{\nn}^\perp=\langle H_\nn^j :j=1,\dots,\tau \rangle$ by Matlis duality, 
\[
 \ann_R(W)^\perp = I^\perp=\langle I_{\nn}^\perp:\nn \in \N^d_+\rangle=\langle H_\nn^j :j=1,\dots,\tau, \nn \in \N^d_+\rangle =W.
\]
Hence $\theta \theta'$ is the identity. 
 \qedhere
\end{proof}

We do not know whether we can remove the assumption on the characteristic in Theorem \ref{thm:CharOfLocalLevel}. 

\begin{remark}
 With few modifications in Theorem \ref{thm:CharOfLocalLevel} it is not difficult to obtain the structure of the inverse system of any graded Cohen-Macaulay $K$-algebra. But it would be interesting to understand the inverse system of a Cohen-Macaulay $K$-algebra of positive dimension in the local case.  
\end{remark}

Let $A=R/I$ be a $d$-dimensional local level ring. 
By Theorem \ref{thm:CharOfLocalLevel} the dual module $I^\perp=W=\langle H^j_\nn: j=1,\dots,\tau,\nn\in\N^d_+\rangle$ is an $L_d^\tau$-admissible $R$-submodule of $\D,$ say with respect to $\uz=z_1,\dots,z_d.$ By Theorem \ref{thm:CharOfLocalLevel} $\ann_R(W)=I.$ Hence by using Claim 1 in Proposition \ref{Prop:AdmissibleGivesLevel} we get 
\[
(I+(\uz))^\perp=I_{{\bf 1}_d}^\perp= \langle H^j_{{\bf 1}_1}:j=1,\ldots,\tau\rangle.
\] Therefore by Proposition \ref{Prop:Emsalem-Iarrobino} the index of nilpotency $s(A)=\socdeg (A/(\uz)A)$ coincides with $\deg(H^1_{\bf 1_d})$ ($= \deg(H^j_{\bf 1_d})$ for any $1\leq j \leq \tau$). 
Since $(\uz)$ is a general minimal reduction of $\m,$ by Fact \ref{Fact:GeneralElements} $\uz$ forms a superficial sequence for $\m.$ Hence
the \emph{multiplicity} of $A$ is
\[
e(A)=\dim_K(A/(\uz)A)=\dim_K\langle H^j_{\bf 1_d}: j=1,\dots,\tau\rangle.
\]
If $A$ is a homogeneous level $K$-algebra, then 
for any $\uz:=z_1,\ldots,z_d$ regular linear sequence for $A$
\[
e(A)=\dim_K\langle H^j_{\bf 1_d}:j=1,\dots,\tau\rangle  
\] 
and by Proposition \ref{Prop:SocDegOfR_nn}
\[
 \reg(A)=\deg(H^1_{\bf 1_d})=\cdots=\deg(H^\tau_{\bf 1_d}).
\]

Therefore important informations of the level $K$-algebras are encoded in the first $\tau$ polynomials of the dual module.

\begin{theorem}
\label{thm:ExtraProperties}
Let $R=K[\![x_1,\ldots,x_m]\!]$ with $\mathrm{char}(K)=0$ or $R=K[x_1,\ldots,x_m]$ and let $d \leq m$ be a positive integer. There is a one-to-one correspondence between the following sets: 
\begin{enumerate}[(i)]
\item $d$-dimensional local or graded level $K$-algebras $A=R/I$ of multiplicity $e$ (resp. index of nilpotency in the local case or Castelnuovo-Mumford regularity in the graded case, say $s$);
\item non-zero local or graded $L^\tau_d$-admissible $R$-submodules $W=\langle H^j_\nn: j=1,\dots,\tau,\nn\in\N^d_+\rangle$ of $\D$ such that 
$\dim_K\langle H^j_{\bf 1_d}:j=1,\dots,\tau\rangle=e$ (resp. $\deg H^1_{\bf 1_d}=\cdots=\deg H^\tau_{\bf 1_d}=s$ in both local and graded cases).
\end{enumerate} 
\end{theorem}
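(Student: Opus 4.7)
The plan is to invoke Theorem~\ref{thm:CharOfLocalLevel}, which already gives the one-to-one correspondence between $d$-dimensional level $K$-algebras of type $\tau$ and non-zero $L_d^\tau$-admissible $R$-submodules of $\D$. The only task left is to verify that, under this bijection, the invariants $e$ (multiplicity) and $s$ (index of nilpotency in the local case or Castelnuovo--Mumford regularity in the graded case) correspond respectively to $\dim_K \langle H_{{\bf 1}_d}^j:j=1,\dots,\tau\rangle$ and the common degree $\deg H_{{\bf 1}_d}^j$. These formulas are precisely the content of the paragraph preceding Theorem~\ref{thm:ExtraProperties}.

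Starting with a level $K$-algebra $A=R/I$ of multiplicity $e$ and invariant $s$, I would fix a sequence $\uz=z_1,\ldots,z_d$ of general elements in $\M$ in the local case (resp.\ a regular linear sequence for $A$ in the graded case) and apply Proposition~\ref{Prop:ConstructionOfAdmissible} to obtain an $L_d^\tau$-admissible generating system $\{H_\nn^j\}$ of $I^\perp$. Then $W_{{\bf 1}_d}=(I+(\uz))^\perp=\langle H_{{\bf 1}_d}^j:j=1,\ldots,\tau\rangle$, and by Matlis duality $\dim_K W_{{\bf 1}_d}=\dim_K A/(\uz)A$. Since $\uz$ is a superficial sequence by Fact~\ref{Fact:GeneralElements} and $A$ is Cohen--Macaulay, this equals $e(A)=e$. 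On the other hand, the common degree of the $H^j_{{\bf 1}_d}$ is $\socdeg(A/(\uz)A)$, which equals $s$ by Proposition~\ref{Prop:SocDegOfR_nn}.

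For the reverse direction, I would take a non-zero $L_d^\tau$-admissible submodule $W$ satisfying the stated degree and dimension conditions (admissible with respect to some $\uz$), apply Theorem~\ref{thm:CharOfLocalLevel} to produce the $d$-dimensional level $K$-algebra $R/\ann_R(W)$ of type $\tau$, and then use Claim~1 in the proof of Proposition~\ref{Prop:AdmissibleGivesLevel}, which gives $\ann_R(W_{{\bf 1}_d})=\ann_R(W)+(\uz)$. The same Matlis-duality and socle-degree identifications as above then translate the hypotheses on $W_{{\bf 1}_d}$ into the assertions that the multiplicity of $R/\ann_R(W)$ is $e$ and its invariant is $s$.

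The main ``obstacle'' is purely bookkeeping rather than mathematical: one simply has to track that the invariants $e$ and $s$ are read off the degree-zero slice $W_{{\bf 1}_d}$ of the admissible family, which under the bijection of Theorem~\ref{thm:CharOfLocalLevel} equals $(I+(\uz))^\perp$. No new ideas beyond Theorem~\ref{thm:CharOfLocalLevel}, Proposition~\ref{Prop:SocDegOfR_nn} and Fact~\ref{Fact:GeneralElements} are needed; the statement is essentially a refinement of Theorem~\ref{thm:CharOfLocalLevel} that records which numerical data are already encoded in the first $\tau$ polynomials of the dual module.
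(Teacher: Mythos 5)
Your proposal is correct and follows essentially the same route as the paper: Theorem~\ref{thm:ExtraProperties} is there presented as a refinement of Theorem~\ref{thm:CharOfLocalLevel}, justified by the immediately preceding discussion which records exactly the identifications $e(A)=\dim_K\langle H^j_{\bf 1_d}:j=1,\dots,\tau\rangle$ and $s=\deg H^j_{\bf 1_d}$ via $W_{{\bf 1}_d}=(I+(\uz))^\perp$, Matlis duality, Fact~\ref{Fact:GeneralElements}, and Proposition~\ref{Prop:SocDegOfR_nn}.
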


We now compare the definition of $G_d$-admissible and $L_d^\tau$-admissible $R$-submodules of $\D.$

\begin{discussion}\pushQED{\qed}
\label{Disc:LdAdmissibleDefinition}
According to \cite[Definition 3.6]{ER17}, a submodule $W$ of $\D$ is said to be $G_d$-admissible if it admits a system of generators $\{H_\nn:{\nn \in \N^d_+}\}$ 
such that 
\begin{asparaenum}
\item for all $\nn \in \N^d_+$ and $i=1,\ldots,d$
\[
 z_i \circ H_{\nn}=\begin{cases}
                    H_{\nn -\ee_i} & \mbox{ if } \nn - \ee_i > \mathbf{0}_d\\
                    0 & \mbox{ otherwise };
                   \end{cases}
\]
 \item for all $i=1,\ldots,d$ and $\nn-\ee_i >\mathbf{0}_d$
\begin{eqnarray}
\label{Eqn:GdAdmissible}
\ann_R\langle H_{\nn-\ee_i}\rangle \circ H_{\nn}=\langle H_{\nn-(n_i-1)\ee_i} \rangle.
\end{eqnarray}
\end{asparaenum}
Inspired by the definition of $G_d$-admissible submodule, one could define an $L_d^\tau$-admissible submodule $W:=\langle H_\nn^j: j=1,\ldots, \tau,\nn \in \N^d_+\rangle$ of $\D$ using the following condition instead of \eqref{Eqn:Cond3}:
\begin{eqnarray}
\label{Eqn:WeakCondition}
\ann_R(W_{\nn-\ee_i}) \circ W_{\nn}=W_{\nn-(n_i-1)\ee_i} \mbox{ for all }i=1,\ldots,d \mbox{ and } \nn-\ee_i >\mathbf{0}_d
\end{eqnarray}
where $W_\nn=\langle H_\nn^j:j=1,\ldots,\tau \rangle.$ 
However, as Example \ref{Example:intersection} shows, if $\tau>1$ an $R$-submodule $W$ of $\D$ satisfying \eqref{Eqn:WeakCondition} need not satisfy \eqref{Eqn:Cond3} and therefore $R/\ann_R(W)$ would not be level with this definition.  

\vskip 3mm
We now prove that our condition \eqref{Eqn:Cond3} is stronger, as it implies \eqref{Eqn:WeakCondition}.
 For $1 \leq j \leq \tau$ consider $f \circ H_\nn^j$ where $f \in \ann_R(W_{\nn-\ee_i}).$ Then 
\[
z_i \circ (f \circ H_\nn^j) = f \circ (z_i \circ H_\nn^j)=f \circ H_{\nn-\ee_i}^j=0.
\]
Hence
\[
f \circ H_\nn^j \in  W_\nn\cap V_\nn^i \subseteq W_{\nn-\ee_i}.  
\]
Therefore $f \circ H_\nn^j \in W_{\nn-\ee_i} \cap V_{\nn-\ee_i}^i $ as $z_i \circ (f \circ H_\nn^j)=0.$ Hence by \eqref{Eqn:Cond3} $ f \circ H_\nn^j \in W_{\nn-2\ee_i}. $ Repeating the same argument, we get 
$f \circ H_\nn^j \in W_{\nn-(n_i-1)\ee_i}.$ 
As $z_i^{n_i-1} \in \ann_R(W_{\nn-\ee_i})$ and $z_i^{n_i-1} \circ H_{\nn}^j=H_{\nn-(n_i-1)\ee_i}^j,$ 
the other containment in \eqref{Eqn:WeakCondition} follows. 

\vskip 3mm
Moreover, an $R$-submodule $W $ of $\D$ is $L_d^1$-admissible if and only if $W$ is $G_d$-admissible. 
Indeed,by the previous discussion it follows that if $W$ is $L_d^1$-admissible then $W$ satisfies \eqref{Eqn:WeakCondition} which is equivalent to \eqref{Eqn:GdAdmissible} as $\tau=1.$ Hence $W$ is $G_d$-admissible. Now, suppose that $W=\langle H_\nn:\nn \in \N^d_+ \rangle$ is $G_d$-admissible. Let $W_{\nn}:=\langle H_\nn\rangle.$ 
We claim that for all $\nn \in \N^d_+$ and $i=1,\ldots,d,$  
\begin{equation}
\label{Eqn:RossiCondition}
W_\nn \cap K[Z_1,\ldots,\widehat{Z_i},\ldots,Z_m] \subseteq W_{\nn-(n_i-1)\ee_i}.
\end{equation}
Consider $f \circ H_\nn \in W_{\nn} \cap K[Z_1,\ldots,\widehat{Z_i},\ldots,Z_m].$ Then $z_i \circ (f \circ H_\nn)=0.$ Hence 
\begin{eqnarray*}
 f \circ H_{\nn-\ee_i}=f \circ (z_i \circ H_{\nn})=z_i \circ (f \circ H_\nn)=0.
\end{eqnarray*}
This implies that $f \in \ann_R\langle H_{\nn-\ee_i} \rangle.$ Therefore by \eqref{Eqn:GdAdmissible} $f \circ H_\nn \in \langle H_{\nn-(n_i-1)\ee_i} \rangle=W_{\nn-(n_i-1)\ee_i}.$ 
To prove that $W$ is $L_d^1$-admissible it is enough to prove \eqref{Eqn:Cond3}. 
We prove \eqref{Eqn:Cond3} by induction on $n_i.$ 
Suppose $n_i=2$ and $f \circ H_{\nn} \in W_{\nn} \cap V_{\nn}^i.$ As $f \circ H_{\nn} \in V_{\nn}^i, $ $z_i \circ (f \circ H_{\nn})=0.$ Hence
\begin{eqnarray*}
 f \circ H_{\nn-\ee_i}=f \circ (z_i \circ H_{\nn})= z_i \circ (f \circ H_{\nn})=0. 
\end{eqnarray*}
Therefore $f \in \ann_R(W_{\nn-\ee_i})$ and hence by \eqref{Eqn:GdAdmissible} $f \circ H_{\nn} \in W_{\nn-\ee_i}.$

Now, assume that \eqref{Eqn:Cond3} is true for $n_i.$ 
Let $f \circ H_{\nn+\ee_i} \in W_{\nn+\ee_i} \cap V_{\nn+\ee_i}^i.$ Then 
\begin{eqnarray*}
z_i^{n_i-1} \circ (z_i \circ (f \circ H_{\nn+\ee_i}))=
z_i^{n_i} \circ (f \circ H_{\nn+\ee_i})=0. \hspace*{1cm}
\end{eqnarray*}
Also,
\[
z_i \circ (f \circ H_{\nn+\ee_i})=f \circ (z_i \circ H_{\nn+\ee_i})=f \circ H_\nn.
\]
Hence $ z_i \circ (f \circ H_{\nn+\ee_i}) \in W_\nn\cap V_\nn^i \subseteq W_{\nn-\ee_i}$ by induction. 
Thus there exists $g \in R$ such that
\[ 
z_i \circ (f \circ H_{\nn+\ee_i})=g \circ H_{\nn-\ee_i}=g\circ (z_i \circ H_\nn)= z_i \circ (g \circ H_{\nn}).
\]
This gives $z_i \circ (f \circ H_{\nn+\ee_i} - g \circ H_{\nn})=0$ and hence 
\[
f \circ H_{\nn+\ee_i} - g \circ H_{\nn} = (f-gz_i) \circ H_{\nn+\ee_i}\in W_{\nn+\ee_i} \cap K[Z_1,\ldots,\widehat{Z_i},\ldots,Z_m] \subseteq W_{(\nn+\ee_i)-n_i\ee_i}
\] where the last containment follows from \eqref{Eqn:RossiCondition}. As $W_{(\nn+\ee_i)-n_i\ee_i} \subseteq W_\nn,$ we get that $f \circ H_{\nn+\ee_i} \subseteq W_\nn$ as required.\qedhere
\end{discussion}

\section{Examples and Effective Constructions}\label{Section:Examples}

In this section we give few examples that illustrate Theorem \ref{thm:CharOfLocalLevel}. 
First we construct level $K$-algebras by constructing $L_d^\tau$-admissible submodule of $\D.$ 
In principle, we need an infinite number of polynomials to construct an $L_d^\tau$-admissible submodule. However, in the graded case we give an effective method to construct level $K$-algebra starting with finite $L_d^\tau$-admissible set (Proposition \ref{Prop:EffectiveConstruction}). We give an example to illustrate that if $d>0,$ then intersection of Gorenstein ideals need not be level (Example \ref{Example:intersection}), which is true in the Artinian case if Gorenstein ideals have same socle degree. Then we compute the inverse system of level $K$-algebras corresponding to certain semigroup rings (Example \ref{Example:semigroupring}) and Stanley-Reisner rings (Example \ref{Example:matroid}).

Following is a tautological example. Recall that an ideal $I \subseteq R =K[\![x_1,\ldots,x_m]\!]$ is a cone with respect to ideal $J \subseteq  K[\![x_{d+1},\ldots,x_m]\!]$ if $I=JR.$

\begin{proposition}\label{Prop:cone}
 Let $H^1,\ldots,H^\tau \in K_{DP}[X_{d+1},\ldots,X_m]$ be (resp. homogeneous) polynomials of same degree, say $b$, whose forms of degree $b$ are linearly independent. Consider the following $R$-submodule of $\D$
 \[
  W:=\langle H_\nn^j=X_1^{n_1-1}\ldots X_d^{n_d-1}H^j:j=1,\ldots,\tau,\nn \in \N^d_+\rangle.
 \]
Then $R/\ann_R(W)$ is a $d$-dimensional (resp. homogeneous) level $K$-algebra of type $\tau.$ Moreover, $\ann_R(W)$ is a cone with respect to $\ann_S\langle H^1,\ldots,H^\tau\rangle$ where $S=K[\![x_{d+1},\ldots, x_m]\!]$ (resp. $S=K[x_{d+1},\ldots,m]$). 
\end{proposition}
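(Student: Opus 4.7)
The plan is to first establish the cone identification $\ann_R(W) = JR$, where $J := \ann_S\langle H^1, \ldots, H^\tau\rangle$, and then to deduce the level property by reducing to the Artinian case via this cone structure.

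For $JR \subseteq \ann_R(W)$, note that every $g \in J \subseteq S$ involves only the variables $x_{d+1}, \ldots, x_m$, so the contraction $g \circ H_\nn^j$ passes through the prefix $X_1^{n_1-1}\cdots X_d^{n_d-1}$ untouched and annihilates $H^j$ by definition of $J$. For the reverse inclusion, I would write any $f \in R$ as $f = \sum_{\beta \in \N^d} x_1^{\beta_1}\cdots x_d^{\beta_d}\,f_\beta$ with $f_\beta \in S$. Since the variables split into two disjoint groups, the contraction action factors and a direct calculation yields
\[
f \circ H_\nn^j = \sum_{\beta \le \nn-\mathbf{1}_d} X_1^{n_1-1-\beta_1}\cdots X_d^{n_d-1-\beta_d}\,(f_\beta \circ H^j).
\]
If $f \in \ann_R(W)$, this expression vanishes for every $\nn$ and $j$; since the monomials $X_1^{a_1}\cdots X_d^{a_d}$ are linearly independent over $K_{DP}[X_{d+1}, \ldots, X_m]$, every coefficient $f_\beta \circ H^j$ vanishes, so $f_\beta \in J$ for every $\beta$ and $f \in JR$ (using, in the local case, that $JR$ is closed in $R$).

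The cone identification then gives $R/\ann_R(W) \cong K[\![x_1,\ldots,x_d]\!] \otimes_K (S/J)$ (with the obvious analog in the graded case), which is a free module of rank $\dim_K(S/J)$ over $K[\![x_1,\ldots,x_d]\!]$. Consequently $x_1,\ldots,x_d$ is a regular sequence, $R/\ann_R(W)$ is Cohen--Macaulay of dimension $d$, and its Artinian reduction by $(x_1,\ldots,x_d)$ is isomorphic to $S/J$. Applying Proposition \ref{Prop:Emsalem-Iarrobino} to the submodule $\langle H^1, \ldots, H^\tau\rangle \subseteq K_{DP}[X_{d+1}, \ldots, X_m]$ identifies $S/J$ as an Artinian level $K$-algebra of type $\tau$. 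In the graded case, iterated use of Proposition \ref{Prop:GLQutotientByHomEle} lifts level-ness from $S/J$ back to $R/\ann_R(W)$. In the local case, $gr_\m(R/\ann_R(W)) \cong K[x_1,\ldots,x_d] \otimes_K gr_{\m_{S/J}}(S/J)$ is Cohen--Macaulay, so Proposition \ref{Prop:construction of level} converts ``level for the specific minimal reduction $(x_1,\ldots,x_d)$'' into ``level for every (in particular for some general) minimal reduction'', which fulfills Definition \ref{Def:LocalLevel}.

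The main obstacle is the reverse inclusion in the cone identification, which requires careful tracking of the contraction action across the two variable groups; in the local case one additionally needs to verify that the decomposition $f = \sum_\beta x^\beta f_\beta$ behaves well with respect to the $\M$-adic topology and that the resulting sum with $f_\beta \in J$ lies in the closed ideal $JR$. Once this step is settled, both assertions of the proposition follow in a unified way from the Artinian theory.
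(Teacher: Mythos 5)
Your proof is correct, but it reverses the logical order of the paper's argument and is, in essence, a genuinely different route. The paper proves that $W$ is $L_d^\tau$-admissible with respect to $x_1,\ldots,x_d$ by checking each of the three conditions of Definition~\ref{Def:LdAdmissible} directly (the degree condition, the derivation condition~\eqref{Eqn:Cond2}, and the intersection condition~\eqref{Eqn:Cond3}), then invokes Proposition~\ref{Prop:AdmissibleGivesLevel} to conclude that $R/\ann_R(W)$ is level, and only afterwards computes $\ann_R(W_\nn)=(\ux^\nn)+(\ann_S\langle H^1,\ldots,H^\tau\rangle)R$ and uses the Krull intersection theorem to obtain the cone description. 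You instead establish the cone identification $\ann_R(W)=JR$ first, by decomposing $f\in R$ as a (power) series over $S$ in $x_1,\ldots,x_d$ and using the fact that the contraction action factors across the two disjoint variable groups; from this you read off that $R/\ann_R(W)\cong (S/J)[\![x_1,\ldots,x_d]\!]$ is free over $K[\![x_1,\ldots,x_d]\!]$, hence Cohen--Macaulay of dimension $d$ with Artinian reduction $S/J$, and then you import levelness from the Artinian case via Proposition~\ref{Prop:Emsalem-Iarrobino}, then Proposition~\ref{Prop:GLQutotientByHomEle} (graded) or Proposition~\ref{Prop:construction of level} together with the Cohen--Macaulayness of the associated graded cone (local). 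Your approach sidesteps the $L_d^\tau$-admissibility machinery entirely — in particular the intersection condition~\eqref{Eqn:Cond3}, which is the technically delicate one — and turns the ``moreover'' clause into the primary computation. The payoff of your route is that it is more elementary and self-contained, making transparent that the cone structure is precisely what forces the ring to be level here; the payoff of the paper's route is that it illustrates Theorem~\ref{thm:CharOfLocalLevel} and its admissibility framework at work, which is the didactic purpose of the Examples section. One point you handled correctly and which deserves the emphasis you gave it: in the local case, the reverse inclusion $\ann_R(W)\subseteq JR$ needs the $\M$-adic closedness of the ideal $JR$ in the complete Noetherian local ring $R$ so that a convergent series with partial sums in $JR$ has its limit in $JR$. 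Your argument there is sound.
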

\begin{proof}
 First we show that $W$ is $L_d^\tau$-admissible $R$-submodule of $\D$ with respect to the sequence $\ux:=x_1,\ldots,x_d.$ Notice that $\ux$ is a sequence of general elements in $\mathcal{M}.$ It is clear that for each $\nn \in \N^d_+,$ 
 \[
 \deg H_\nn^1=\cdots=\deg H_{\nn}^\tau=b+|\nn|-d
 \] 
 and the forms of degree $b+|\nn|-d$ of $H_{\nn}^1,\ldots,H_{\nn}^\tau$ are linearly independent. Also, for each $j=1,\ldots,\tau,$ $i=1,\ldots,d$ and $\nn \in \N^d_+$ 
 \[
  x_i \circ H_{\nn}^j=\begin{cases}
                       X_1^{n_1-1}\cdots X_i^{n_i-2} \cdots X_d^{n_d-1} H^j=H_{\nn-\ee_i}^j & \mbox{ if }\nn-\ee_i > \mathbf{0}_d\\
                       0 & \mbox{ otherwise }
                      \end{cases}
 \]
and hence $W$ satisfies \eqref{Eqn:Cond2}. Next  
we prove \eqref{Eqn:Cond3} in the Definition \ref{Def:LdAdmissible}. Fix $1 \leq i \leq d.$ 
Let $W_\nn=\langle H_{\nn}^j:j=1,\ldots,\tau\rangle.$ Suppose that $\nn-\ee_i > {\bf 0}_d$ and $f \circ H_{\nn}^j \in W_{\nn} \cap V_\nn^i. $
As $f \circ H_{\nn}^j \in V_{\nn}^i,$ $x_i^{n_i-1} \circ (f \circ H_{\nn}^j)=0.$ This implies that $x_i$ divides $f.$ Let $f=x_ig$ for some $g \in R.$ 
Then 
\[    
f \circ H_{\nn}^j=(x_ig) \circ H_{\nn}^j=g\circ (x_i \circ H_{\nn}^j)=g \circ H_{\nn-\ee_i}^j \in W_{\nn-\ee_i}.
\]
Thus $W_{\nn} \cap V_\nn^i \subseteq W_{\nn-\ee_i}$ for all $\nn-\ee_i > {\bf 0}_d.$ Hence $W$ is $L_d^\tau$-admissible. Therefore by Proposition \ref{Prop:AdmissibleGivesLevel} $R/\ann_R(W)$ is a level $K$-algebra of dimension $d$ type $\tau.$ Hence we have the first part of the statement.

\noindent
It is easy to verify that
\[
 \ann_R(W_\nn)=(\ux^\nn)+(\ann_S\langle H^1,\ldots,H^\tau \rangle)R
 =(\ux^\nn)+(\ann_S(W_{{\bf 1}_d}))R.
\]
Therefore
\begin{eqnarray*}
  \ann_R(W)&=&\cap_{\nn \in \N_d^+}\ann_R(W_\nn)\\
&=&\cap_{\nn \in \N_d^+} (\ux^\nn)+(\ann_S(W_{{\bf 1}_d}))R\\
&=&(\ann_S(W_{{\bf 1}_d}))R \hspace*{1cm}
 \end{eqnarray*}
where the last equality follows from Krull intersection theorem applied to $R/\ann_S(W_{{\bf 1}_d})$.
Hence $\ann_R(W)$ is a cone with respect to $\ann_S(W_{{\bf 1}_d}).$ \qedhere
\end{proof}

Let $t_0 \in \N_+.$ We say that a family $\mathcal{H}=\{H_\nn^j:j=1,\ldots,\tau,~\nn \in \N^d_+, ~|\nn| \leq t_0\}$ of polynomials of $\D$ is $L_d^\tau$-admissible if the elements $H_\nn^j$ satisfy conditions \ref{Def:LdAdmissible}\eqref{Def:Cond1}, \ref{Def:LdAdmissible}\eqref{Def:Cond2} and \ref{Def:LdAdmissible}\eqref{Def:Cond3} of the definition of $L_d^\tau$-admissibility up to $\nn$ such that $|\nn| \leq t_0.$  
A similar proof as \cite[Proposition 4.2]{ER17} shows that in the graded case finitely many admissible polynomials $\mathcal{H}$ are suffices to get an ideal $I$ such that $R/I$ is level.

\begin{proposition}
\label{Prop:EffectiveConstruction}
 Let $\mathcal{H}=\{H_\nn^j:j=1,\ldots,\tau,~\nn \in \N^d_+, ~|\nn| \leq t_0\}$ be an $L_d^\tau$-admissible set of homogeneous polynomials with respect to a regular linear sequence $\uz=z_1,\ldots,z_d$ with $t_0 \geq (s+2)d$ where $s=\deg H_{{\bf 1}_d}^1.$ 
 Suppose that $\mathcal{H}$ can be extended to an $L_d^\tau$-admissible submodule $W=\langle G_{\nn}^j:\nn\in \N^d_+,j=1,\ldots,\tau \rangle$ of $\D$ such that $G^j_{\nn}=H^j_{\nn}$ for all $j=1,\dots,\tau$ and $|\nn|\le t_0.$ Let $I:=\ann_R(W).$ Then
 \[
  I=(\ann_R\langle H_{({\bf s}+{\bf 2})_d}^j:j=1,\ldots,\tau \rangle)_{\leq {s+1}} R.
 \]
\end{proposition}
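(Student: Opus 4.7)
The plan is to reduce the claim to two assertions: first, that $J:=\ann_R\langle H^j_{({\bf s}+{\bf 2})_d}:j=1,\ldots,\tau\rangle$ and $I$ agree in degrees $\leq s+1$; second, that $I$ is generated in degrees $\leq s+1$. Together these yield $I=I_{\leq s+1}R=J_{\leq s+1}R$, which is the desired formula. The hypothesis $t_0\geq(s+2)d$ guarantees $|({\bf s}+{\bf 2})_d|=d(s+2)\leq t_0$, so $H^j_{({\bf s}+{\bf 2})_d}\in\mathcal{H}$ coincides with $G^j_{({\bf s}+{\bf 2})_d}$, and hence $\langle H^j_{({\bf s}+{\bf 2})_d}:j=1,\ldots,\tau\rangle$ equals the module $W_{({\bf s}+{\bf 2})_d}$ from Claim 1 in the proof of Proposition \ref{Prop:AdmissibleGivesLevel}.

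For the first assertion, I apply that Claim 1 to the $L_d^\tau$-admissible submodule $W$, obtaining
\[
J=\ann_R(W_{({\bf s}+{\bf 2})_d})=I+(z_1^{s+2},\ldots,z_d^{s+2}).
\]
Since $(z_1^{s+2},\ldots,z_d^{s+2})$ is a homogeneous ideal generated in degree $s+2$ (using that $\uz$ is a linear regular sequence), its graded component in degrees $\leq s+1$ is zero. Taking degree $\leq s+1$ components of both sides of the displayed equality therefore gives $J_{\leq s+1}=I_{\leq s+1}$.

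For the second assertion, I use a Castelnuovo--Mumford regularity estimate. By Proposition \ref{Prop:AdmissibleGivesLevel}, $R/I$ is a graded level $K$-algebra of dimension $d$ and type $\tau$, and its Artinian reduction $R/(I+(\uz))$ corresponds via Proposition \ref{Prop:Emsalem-Iarrobino} to $W_{{\bf 1}_d}=\langle H^j_{{\bf 1}_d}\rangle$, whose generators all have degree $s$ by item (1) of Definition \ref{Def:LdAdmissible}; hence this Artinian reduction has socle degree $s$. Since Castelnuovo--Mumford regularity is preserved under reduction modulo a linear regular sequence, $\reg(R/I)=s$. The short exact sequence $0\to I\to R\to R/I\to 0$ then gives $\reg(I)\leq\max\{\reg(R),\reg(R/I)+1\}=s+1$, so every minimal homogeneous generator of $I$ sits in degree $\leq s+1$, i.e.\ $I=I_{\leq s+1}R$.

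The argument parallels \cite[Proposition 4.2]{ER17} (the Gorenstein case $\tau=1$). The only non-trivial input is the regularity equality $\reg(R/I)=s$, which rests on $R/I$ being graded Cohen--Macaulay with uniform socle degree in its Artinian reduction; both properties are furnished by Proposition \ref{Prop:AdmissibleGivesLevel} together with item (1) of Definition \ref{Def:LdAdmissible}. The remaining steps are a direct comparison of graded pieces, so I do not foresee any serious obstacle beyond verifying the regularity bound.
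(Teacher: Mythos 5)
Your proof is correct and follows essentially the same route as the paper's: both proofs use Claim~1 from the proof of Proposition~\ref{Prop:AdmissibleGivesLevel} to identify $\ann_R\langle H^j_{({\bf s}+{\bf 2})_d}\rangle$ with $I+(\uz^{({\bf s}+{\bf 2})_d})$, and both conclude via the regularity bound $\reg(R/I)=s$ (the paper invokes Theorem~\ref{thm:ExtraProperties} for this; you re-derive it directly from the socle degree of the Artinian reduction) together with the standard fact that minimal generators of $I$ lie in degrees at most $\reg(R/I)+1$. Your extra remark that $t_0\geq (s+2)d$ ensures $H^j_{({\bf s}+{\bf 2})_d}\in\mathcal H$ makes explicit a point the paper leaves implicit, but the argument is otherwise the same.
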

\begin{proof}
 By Theorem \ref{thm:ExtraProperties} $R/I$ is a level $K$-algebra and 
 \[
   \reg(R/I)=\deg H_{{\bf 1}_d}^1=s.
 \]
 It is well known that the maximum degree of a minimal system of generators of $I$ is at most $\reg(R/I)+1.$ By using the Claim 1 in the proof of Proposition \ref{Prop:AdmissibleGivesLevel} we get 
 \[
 \ann_R \langle H_{({\bf s}+{\bf 2})_d}^j:j=1,\ldots,\tau \rangle= \ann_R(W_{({\bf s}+{\bf 2})_d})=I+(\uz^{({\bf s}+{\bf 2})_d}).
 \] 
 This gives the result.
\end{proof}

\begin{remark}
Notice that Proposition \ref{Prop:EffectiveConstruction} provides an effective method to construct an level graded $K$-algebra starting with a finite $L_d^\tau$-admissible set. Indeed, suppose that 
$\mathcal{H}$ is a finite $L_d^\tau$-admissible set as in Proposition \ref{Prop:EffectiveConstruction}. 
Check whether the ideal $I:=\ann_R(W_{({\bf s}+{\bf 2})_d})_{\leq {s+1}}$ is level. If so, then we have already constructed a level ring. Else, by Proposition \ref{Prop:EffectiveConstruction} $\mathcal{H}$ is not extendable to an $L_d^\tau$-admissible submodule. 
\end{remark}

We give an explicit example to demonstrate Propositions \ref{Prop:cone} and  \ref{Prop:EffectiveConstruction}.

\begin{example}
\pushQED{\qed}
Let $R=\mathbb{Q}[x,y,z]$ and  $\D=\mathbb{Q}_{DP}[X,Y,Z].$  
Let $d=1,~\tau=2$ and
\begin{align*}
	H_1^1&=Y^3 &H_1^2&=Z^3\\
	H_2^1&=XH_1^1 & H_2^2&=XH_1^2\\
	H_3^1&=X^2H_1^1  &H_3^2&=X^2H_1^2\\
	H_4^1&=X^3H_1^1 & H_4^2&=X^3H_1^2\\
	H_5^1&=X^4H_1^1 & H_5^2&=X^4H_1^2.
	\end{align*}
	By Proposition \ref{Prop:cone} the set $\mathcal{H}=\{H_1^1,H_1^2,H_2^1,H_2^2,H_3^1,H_3^3,H_4^1,H_4^2\}$ is $L_1^2$-admissible.
	Suppose that $\mathcal{H}$ can be extended to an $L_1^2$-admissible submodule $W$ of $\D.$ Then by Proposition \ref{Prop:EffectiveConstruction} $R/I$ is level where $I:=(\ann_R\langle H^1_5,H^2_5\rangle)_{\le 4}.$	
	By computer algebra system it can be verified that 
	\[
	I=(y^4,yz,z^4)
	\]
	and $R/I$ is a $1$-dimensional level $K$-algebra of type $2$. 
	Thus we have constructed level $K$-algebra starting with finite $L_1^2$-admissible set. \qedhere
\end{example}

From Proposition \ref{Prop:Emsalem-Iarrobino} it follows that the finite intersection of Gorenstein Artinian  ideals of same socle degree is level where by a Gorenstein (resp. level) ideal $I$ we mean that $R/I$ is Gorenstein (resp. level). 
This is no longer true if ideals have positive codimension as the following example illustrates. 

\begin{example}
\label{Example:intersection}
\pushQED{\qed}
 Let $R=\mathbb{Q}[x,y,z]$ and  $\D=\mathbb{Q}_{DP}[X,Y,Z].$ Let 
 \begin{align*}
  H_1^1&=Y^3-Z^3 &H_1^2&=Y^2Z\\
  H_2^1&=XH_1^1+YZ^3 & H_2^2&=XH_1^2\\
  H_3^1&=XH_2^1-Y^2Z^3  &H_3^2&=XH_2^2\\
  H_4^1&=XH_3^1+Y^3Z^3-4Z^6 & H_4^2&=XH_3^2 \\
  H_5^1&=XH_4^1+Y^7-Y^4Z^3+4YZ^6 & H_5^2&=XH_4^2.
 \end{align*}
By using Singular or Macaulay2 one can verify that the set $\mathcal{H}_1=\{H_1^1,H_2^1,H_3^1,H_4^1,H_5^1\}$
is $G_1$-admissible and the ideal 
 \[
  I:=(\ann_R\langle H_5^1\rangle)_{\leq 4}=(yz+xz,y^3+z^3-xy^2+x^2y-x^3)
 \]
is a $1$-dimensional Gorenstein ideal 
(see \cite[Example 4.4]{ER17}). 
Similarly, the set $\mathcal{H}_2=\{H_1^2,H_2^2,H_3^2,H_4^2,H_5^2\}$ is $G_1$-admissible and the corresponding $1$-dimensional Gorenstein ideal is 
\[
J:=(\ann_R\langle H_5^2\rangle)_{\leq 4}=(z^2,y^3).
\]  In fact, both $I$ and $J$ are complete intersections. It is easy to check that
\[
 I \cap J = (xz^2+yz^2,4y^3z+z^4,x^3y^3-x^2y^4+xy^5-y^6-y^2z^3)
\]
and $R/(I \cap J)$ has a graded minimal $R$-free resolution as follows:
\[
 0 \to R(-6) \oplus R(-7) \to R(-3) \oplus R(-4) \oplus R(-6) \to R \to 0.  
\]
This shows that $R/(I \cap J)$ is not level. Here 
\[
 Z^3 \in \langle H_2^1,H_2^2 \rangle \cap V_2^1 \setminus \langle H_1^1, H_1^2\rangle
\]
and hence $W:=\{H_n^j:n \leq 2 \mbox{ and } j=1,2  \}$ does not satisfy \eqref{Eqn:Cond3}. However, 
as $\mathcal{H}_1$ and $\mathcal{H}_2$ are $G_1$-admissible, it is easy to verify that $W$ satisfies \eqref{Eqn:WeakCondition}. \qedhere 
\end{example}

We now construct a one-dimensional level ring of type $2$ in the non-graded case.   
\begin{example}
	\label{Example:semigroupring}
	\pushQED{\qed}
	Let $R=\mathbb{Q}[x,y,z,w]$ and  $\D=\mathbb{Q}_{DP}[X,Y,Z,W].$ Let $d=1, ~\tau=2$ and
	\begin{align*}
	H_1^1&=YW &H_1^2&=ZW\\
	H_2^1&=XH_1^1 & H_2^2&=XH_1^2+Y^2W\\
	H_3^1&=X^2H_1^1+Z^2W  &H_3^2&=X^2H_1^2+XY^2W=XH_2^2\\
	H_4^1&=X^3H_1^1+XZ^2W+Y^2ZW+W^3 & H_4^2&=X^3H_1^2+X^2Y^2W+YZ^2W\\
	&=XH_3^1+Y^2ZW+W^3 & & =XH_3^2+YZ^2W.
	\end{align*}
	By using Singular or Macaulay2 one can verify that the set $\mathcal{H}_1=\{H_1^1,H_1^2,H_2^1,H_2^2,H_3^1,H_3^3,H_4^1,H_4^2\}$ is $L_1^2$-admissible.
	The involved polynomials are not homogeneous, and hence they correspond to a local ring.
	Therefore we can't use Proposition \ref{Prop:EffectiveConstruction}. 
	However, 
	\[
	\ann_R(W_4)=\ann_R\langle H_4^1,H_4^2\rangle=(x^4,y^2-xz,x^3-yz,x^2y-z^2,w^2-x^3y)
	\]
	and can be written as $I+(x^4)$, where
	\[
	I=(y^2-xz,x^3-yz,x^2y-z^2,w^2-x^3y).
	\]
	Moreover, $R/I$ is a $1$-dimensional level ring of type $2$ (see Example \ref{Example:Rossi-Valla}).
	Notice that $R/I\cong k[\![t^6,t^8,t^{10},t^{13}]\!].$ \qedhere
\end{example}

Example \ref{Example:semigroupring} raises the natural question whether it is possible to find an analogous of Proposition \ref{Prop:EffectiveConstruction} in the local case, at least for some special classes.

We now give a couple of examples of inverse systems of special families of level algebras.
\begin{example} (Semigroup rings)
\pushQED{\qed}
Let $n_1,\dots,n_l$ be an arithmetic sequence, i.e. there exists an integer $q \geq 1$ such that
\[
n_i=n_{i-1}+q=n_1+(i-1)q
\] 
for $i=2,\dots,l$.
Then the ring $A=K[t^{n_1},\dots,t^{n_l}]$ is a semigroup ring whose associated graded ring $G$ is level (see \cite[Proposition 1.12]{MT95}).
By \cite[Example 1(b)]{Fro87} the type of  $G$ is always greater or equal than the type of $A$. 
If $G$ is level, then the two types coincide. Hence we can deduce that for any minimal reduction $J$ of $\m,$ $A/J$ is level. Therefore by Proposition \ref{Prop:construction of level} the local ring $A$ is also level.

We give an explicit example. Let $A=\mathbb{Q}[\![t^6,t^{10},t^{14},t^{18}]\!]$.
Then $A$ is a semigroup ring associated to an arithmetic sequence, and we know from the previous discussion that $A$ is level.
It is easy to check that $A=R/I$ where $R=\mathbb{Q}[\![x,y,z,w]\!]$ and 
\[
	I=(x^3-w,x^4-yz, xz-y^2,x^3y-z^2).
\] 
Then $(x)$ is a general minimal reduction of $\mathcal{M}.$ 
Consider the following polynomials in $\D=\mathbb{Q}_{DP}[X,Y,Z,W]$ corresponding to the inverse system of $ I+(x^n)$ for $n \leq 5$:
\begin{align*}
	H_1^1&=Y & H_1^2&=Z \\
	H_2^1&=XH_1^1 & H_2^2&=XH_1^2+Y^2 \\
	H_3^1&=X^2H_1^1  & H_3^2&=X^2H_1^2+XY^2=XH_2^2\\
	H_4^1&=X^3H_1^1+YW+Z^2 & H_4^2&=X^3H_1^2+X^2Y^2+ZW=XH_3^2+ZW \\
	H_5^1&=X^4H_1^1+XYW+XZ^2+Y^2Z
	& H_5^2&=X^4H_1^2+X^3Y^2+XZW+YZ^2+Y^2W \\
	&=XH_4^1+Y^2Z &&=XH_4^2+YZ^2+Y^2W
\end{align*}
In principle by Theorem \ref{thm:CharOfLocalLevel} we have an infinite number of polynomials in the inverse system.
However, in this case, to recover the ideal we only need a finite number of polynomials. 
Let $W_{(5,5)}=\langle H_5^1,H_5^2\rangle$.
Using Singular one can verify that
\begin{align*}
	&\ann_R(W_{(5,5)})_{\le 4}=(x^3-w,xz-y^2,xw-yz,z^2-yw,x^5 )_{\le 4}\\
	&=( x^3-w,xz-y^2,xw-yz,z^2-yw)=I. \qedhere
\end{align*}
\end{example}

\vskip 2mm
In the following example we construct the inverse system of a Stanley-Reisner ring (of dimension two) associated to a matroid simplicial complex. 

\begin{example}	(Stanley-Reisner rings)
	\label{Example:matroid}
	\pushQED{\qed}
	By \cite[Theorem 3.4]{Sta96} the Stanley-Reisner rings associated to matroid simplicial complexes are level. We describe particular type of matroids that arise from matrices.
	Let $X\in K^{m\times n}$ where $K$ is a field and $m\le n.$  
	We write $[i_1,\dots,i_m]$ for the $m\times m$ minor of $X$ corresponding to columns $i_1,\ldots,i_m$ of $X$ where $1\le i_1<\cdots<i_m\le n.$ Consider the simplicial complex $\Delta(X)$ on vertices $\{1,\dots,n\}$ that is generated by 
	faces $\{i_1,\dots,i_m\},$ $1\le i_1<\cdots<i_m\le n,$ such that $[i_1,\dots,i_m]\ne 0,$ that is,
	\[
	\Delta(X)=\langle \{i_1,\dots,i_m\}\mid [i_1,\dots,i_m]\ne 0 \rangle.
	\] 
	Then $\Delta(X)$ is a matroid.
	Stanley's result implies that $R/I_{\Delta(X)}$ is a graded level algebra, where $I_{\Delta(X)}$ is the Stanley-Reisner ideal associated to $\Delta(X)$. Let us consider an explicit example.  

Let 
	\[
	X=\left(\begin{matrix}
	1 & 0 & 2 & 0 & 3 \\
	0 & 1 & 0 & 2 & 0 
	\end{matrix}\right)
	\] 
	Then the matroid associated to $X$ is
\[
\Delta:=\Delta(X) =\langle \{1,2\},\{2,3\},\{3,4\},\{4,5\},\{1,4\},\{2,5\}\rangle.
\]
	The figure below illustrates this simplicial complex:
\begin{figure}[H]
	\centering	
	\begin{tikzpicture}[inner sep=1.5,scale=0.5]
	\draw[->] (0,1) -- (2,0);
	\draw[->] (0,1) -- (2,2);
	\draw[->] (4,0) -- (2,2);
	\draw[->] (2,0) -- (4,2);
	\draw[->] (4,2) -- (2,2);
	\draw[->] (2,0) -- (4,0);
	
	\draw (4,0) node[shape=circle,draw,fill=black] {};
	\draw (4,0) node[below=2pt] {$5$};
	\draw (4,2) node[shape=circle,draw,fill=black]{};
	\draw (4,2) node[above=2pt] {$3$};
	\draw (2,0) node[shape=circle,draw,fill=black]{};
	\draw (2,0) node[below=2pt] {$4$};
	\draw (0,1) node[shape=circle,draw,fill=black]{};
	\draw (0,1) node[left=2pt] {$1$};
	\draw (2,2) node[shape=circle,draw,fill=black]{};
	\draw (2,2) node[above=2pt] {$2$};
	\end{tikzpicture}
\end{figure}
\noindent 
Let $R=\mathbb{Q}[x_1,x_2,x_3,x_4,x_5].$ The Stanley-Reisner ideal corresponding to $\Delta$ is
	\[
	I_{\Delta}=(x_1x_3,x_2x_4,x_1x_5,x_3x_5).
	\]
	By Stanley's result, the ring $R/I_{\Delta}$ is a graded level ring of dimension $2$.
	Observe that $x_2+x_4$ and $x_1+x_3+x_5$ forms a regular sequence for $A=R/I_{\Delta}$.
	In order to find its inverse system, we first operate a change of coordinates:
	\begin{align*}
	\varphi: R & \to S=\mathbb{Q}[y_1,\dots,y_5] \\
	x_2+x_4 & \mapsto y_1 \\
	x_1+x_3+x_5 &\mapsto y_2 \\
	x_3 &\mapsto y_3 \\
	x_4 &\mapsto y_4 \\
	x_5 &\mapsto y_5
	\end{align*}
	Under this change of coordinates we get the ideal
	\[
	I:=\varphi(I_{\Delta})=((y_2-y_3-y_5)y_3,(y_1-y_4)y_4,(y_2-y_3-y_5)y_5, y_3y_5) \subseteq S.
	\]
	Now, the ring $A=S/I$ is a graded level ring of dimension $2$ and type $2$, and $y_1,y_2$ forms a regular sequence for $A$. Consider $\D=\mathbb{Q}_{DP}[Y_1,\dots,Y_5]$ the divided power ring dual to $S$.
	Using Singular or Macaulay2, we can compute the first generators of $I^\perp\subseteq \D$:
	\begin{align*}
	H_{(1,1)}^1=&Y_4Y_5 &H_{(1,1)}^2=&Y_3Y_4\\
	H_{(1,2)}^1=&Y_2H_{(1,1)}^1+Y_4Y_5^2& 	H_{(1,2)}^2=&Y_2H_{(1,1)}^2+Y_3^2Y_4\\
	H_{(2,2)}^1=&Y_1H_{(1,2)}^1+Y_2Y_4^2Y_5+Y_4^2Y_5^2 & H_{(2,2)}^2=&Y_1H_{(1,2)}^2+Y_2Y_3Y_4^2+Y_3^2Y_4^2 \\
	& \vdots & & \vdots \\
	H_{(4,4)}^1=&Y_1^2Y_2^2H_{(2,2)}^1 +Y_1Y_2^3Y_4^3Y_5+Y_2^3Y_4^4Y_5+ &H_{(4,4)}^2=&Y_1^2Y_2^2H_{(2,2)}^2+Y_1^3Y_2Y_3^3Y_4+Y_1^3Y_3^4Y_4+
	\\
	&Y_1Y_2^2Y_4^3Y_5^2+Y_2^2Y_4^4Y_5^2+Y_1^3Y_2Y_4Y_5^3+ & & Y_1^2Y_2Y_3^3Y_4^2+Y_1^2Y_3^4Y_4^2+Y_1Y_2^3Y_3Y_4^3+\\
	&Y_1^2Y_2Y_4^2Y_5^3+Y_1Y_2Y_4^3Y_5^3+Y_2Y_4^4Y_5^3+
	& & Y_1Y_2^2Y_3^2Y_4^3+Y_1Y_2Y_3^3Y_4^3+Y_1Y_3^4Y_4^3+ \\
	&Y_1^3Y_4Y_5^4+Y_1^2Y_4^2Y_5^4+Y_1Y_4^3Y_5^4+ & &Y_2^3Y_3Y_4^4+Y_2^2Y_3^2Y_4^4+Y_2Y_3^3Y_4^4+ \\
	& Y_4^4Y_5^4 &&Y_3^4Y_4^4.
	\end{align*}
	The set 
	\[
	\mathcal{H}=\{H_{(1,1)}^1,H_{(1,1)}^2,H_{(1,2)}^1,H_{(1,2)}^2,H_{(2,1)}^1,H_{(2,1)}^2,H_{(2,2)}^1,H_{(2,2)}^2,\dots,H_{(4,4)}^1,H_{(4,4)}^2\}
	\]
	is $L_2^2$-admissible. By Proposition \ref{Prop:EffectiveConstruction} we know that $I=\ann_R(W_{(4,4)})_{\le 3}$ where $W_{(4,4)}=\langle H_{(4,4)}^1,H_{(4,4)}^2 \rangle.$ In fact, it can be verified that
	\begin{align*}
	&\ann_R(W_{(4,4)})_{\le 3}=(y_3y_5, y_2y_5-y_5^2, y_1y_4-y_4^2, y_2y_3-y_3^2, y_2^4
	y_1^4, y_5^5, y_4^5, y_3^5)_{\le 3} \\
	=& (y_3y_5, y_2y_5-y_5^2, y_1y_4-y_4^2, y_2y_3-y_3^2)=I. \qedhere
	\end{align*}
\end{example}
\vskip 2mm

\section*{Acknowledgements}

We thank M. E. Rossi for suggesting the problem and providing many useful ideas throughout the preparation of this manuscript. We thank  Juan Elias for providing us the updated version of {\sc Inverse-syst.lib} and clarifying our doubt in Singular. We would also like to thank Alessandro De Stefani for providing us the proof of Proposition \ref{Prop:SocDegOfR_nn}\eqref{Prop:SocDegOfR_nnLocal} in the one-dimensional case, and Aldo Conca and Matteo Varbaro for useful discussions on the examples of level rings.



\end{document}